\makeatletter
\def\thanks#1{\protected@xdef\@thanks{\@thanks
        \protect\footnotetext{#1}}}
\makeatother

\documentclass{article}

\usepackage{authblk}
\usepackage{amsmath}
\usepackage{amsfonts,amssymb}
\usepackage{mathrsfs}
\usepackage{ntheorem}
\usepackage{indentfirst}
\usepackage{cite}
\usepackage{geometry}
\usepackage{graphicx}
\usepackage{subfigure}
\usepackage{epstopdf}
\usepackage{authblk}
\usepackage{float}
\usepackage{color} 
\geometry{left=2.1cm,right=2.1cm,top=2.2cm,bottom=2.2cm}

\setcounter{MaxMatrixCols}{10}

\newtheorem{theorem}{Theorem}[section]
\newtheorem{lemma}{Lemma}[section]
\newtheorem{proposition}{Proposition}[section]

\theoremstyle{definition}
\newtheorem{definition}{Definition}[section]
\newtheorem{example}{Example}[section]

\theoremstyle{remark}
\newtheorem{remark}{Remark}[section]

\newenvironment{proof}[1][Proof]{\noindent\textbf{#1.} }

\begin{document}
\title{\textbf{Linear-Quadratic Large-Population Problem with Partial Information: Hamiltonian Approach and Riccati Approach}
\thanks{This work was funded by the National Natural Science Foundation of China (12022108,11971267,11831010,61961160732), Shandong Provincial Natural Science Foundation (ZR2019ZD42), the Taishan Scholars Climbing Program of Shandong (TSPD20210302), and by the Distinguished Young Scholars Program of Shandong University. (\emph{Corresponding author: Tianyang Nie}.)}
}

\author{Min Li, ~Tianyang Nie ~and~ Zhen Wu \thanks{M.~Li, T.~Nie and Z. Wu are with the School of Mathematics, Shandong University, Jinan 250100, China. Email: liminmath@outlook.com (M. Li); nietianyang@sdu.edu.cn (T. Nie); wuzhen@sdu.edu.cn (Z. Wu).}}

\date{}

\maketitle

\begin{abstract}
This paper studies a class of partial information linear-quadratic  mean-field game problems. A general stochastic large-population system is considered, where the diffusion term of the dynamic of each agent can depend on the state and control. We study both the control constrained case and unconstrained case. In control constrained case, by using Hamiltonian approach and convex analysis, the explicit decentralized strategies can be obtained through projection operator.  The corresponding Hamiltonian type consistency condition  system is derived, which turns out to be a nonlinear mean-field forward-backward stochastic differential equation with projection operator. The well-posedness of such kind of equations is proved by using discounting method. Moreover, the corresponding $\varepsilon$-Nash equilibrium property is verified. In control unconstrained case, the decentralized strategies can be further represented explicitly as the feedback of filtered state through Riccati approach. The existence and uniqueness of a solution to a new Riccati type consistency condition system is also discussed. As an application, a general inter-bank borrowing and lending problem is studied to illustrate that the effect of partial information cannot be ignored.
 \end{abstract}

 \textbf{Keywords:}
mean-field game, forward-backward stochastic differential equation, Riccati equation, control constrained, partial information, $\varepsilon$-Nash equilibrium

 \textbf{AMS subject classifications:} 60H10,91A10,91A23,91A25,93E11,93E20

 \section{Introduction}
Over the decades, the research on dynamic optimization for stochastic large-population systems has already received extensive attentions. Distinguished from the single one, large-population system has considerable agents, which is broadly applied in the fields of engineering, finance, social science and so on.  In this context, the single individual agent effect is insignificant and negligible while the collective behaviors of the whole population cannot be ignored.  All the agents are weakly coupled via the state average or empirical distribution in dynamics and/or cost functionals. For these reasons, the centralized strategies of the given agent, which are based on the information of all peers, are infeasible. An efficient approach is to discuss the associated mean-filed game (MFG) to determine an approximate equilibrium by analyzing the related limiting behavior. Roughly speaking, the main idea is to study the related control problem by considering its own individual state and some off-line quantities. The introduction of some frozen quantities enables the highly complicated coupling problem to be a decoupled one, which is in essence a stochastic optimal control problem.  As a consequence, by the well-known results such as stochastic maximum principle and dynamic programming principle, the decentralized strategies are obtained, which only rely on the local information. To complete the procedure, the off-line quantities will be computed by some consistency condition (CC) systems.

MFG problems were originally formulated by Lasry and Lions \cite{LarsyLion20061,LarsyLion20062,LarsyLion2007} and independently by  Huang, Malham\'e and Caines \cite{Huang2006,Huang2007}. Indeed, motivated by phenomena in statistical mechanics and physics, Lasry and Lions \cite{LarsyLion20061,LarsyLion20062,LarsyLion2007} (see also Cardaliaguet \cite{Cardaliaguet2010}) were concerned with situations that involved a large number of rational players and obtained distributed closed-loop strategies, which were represented by the forward-backward systems of a Hamilton-Jacobi-Bellman (HJB) equation and a Fokker-Planck equation. Huang, Malham\'e and Caines \cite{Huang2006,Huang2007} introduced Nash Certainty Equivalence (NCE) to handle MFG problems from the perspective of engineering applications, which led to decentralized control synthesis. For further research on MFG problems and related topics, the readers are referred to the papers e.g. \cite{Buckdahn2009,Huang2012,Cardaliaguet2013,Carmona2013,
Bardi2014,Moon2017,Hu2018,Nie2018,Li2020,Wang2020} and the monographs by Bensoussan \cite{Bensoussan2013}, and by Carmona and Delarue \cite{Carmona2018}.

We mention that almost above listed literatures  are built on the fact that agents have access to full information. However, in many real applications, only partial information can be acquired by agents. The research of partial information has far-fetching theoretical significance and extensive application value, thus large-population problems with partial information have attracted researchers' intensive attentions. For example, Huang, Caines and Malham\'e \cite{Caines2006} considered dynamic games in a large population of stochastic agents, where agents had local noisy
measurements of its own state. Huang and Wang \cite{Huang2016} studied a class of dynamic optimization problems for large-population system with partial information. \c{S}en and Caines \cite{Caines2016} investigated MFG theory with partially observed major agent in both linear and nonlinear formulations. Huang, Wang and Wu \cite{Wang2016} considered the MFG problem for backward stochastic systems with partial information. \c{S}en and Caines \cite{Caines2019} studied partially observed stochastic large-population problem with nonlinear dynamics and nonlinear cost functionals. Bensoussan, Feng and Huang \cite{Bensoussan2019} focused on a class of linear-quadratic-gaussian (LQG)
MFGs with partial observation and common noise. To make some comparisons, another relevant but essentially different topic is the mean-field type control problem. For the literature review, some associated works on partial information mean-field control problem are described. For example, Wang, Zhang and Zhang \cite{Wang2013} proposed the stochastic maximum principle of mean-field type optimal control problem with partial information. Buckdahn, Li and Ma \cite{Buckdahn2017} explored a new type of mean-field, non-Markovian stochastic control problems with partial observations. 
For further studies, the readers can see \cite{Hafayed2015,Ma2016,Fu2019,NieYan2022,Wang2017} and the reference therein.

It is widely acknowledged that the control constrained problem appears frequently in the fields of finance and economics. For example, in many financial models, control variables have some restrictions, such as taking only nonnegative values. A typical example is the mean-variance portfolio selection problem with no-shorting, see. e.g. Li, Zhou and Lim \cite{Li2002}. Moreover, control constrained problems have been applied to many other fields, such as aeronautics, artificial intelligence, network communication. As for the control constrained stochastic LQ control problem with random coefficients, Hu and Zhou \cite{Hu2005} derived the explicit optimal control and optimal cost through two extended stochastic Riccati equations. Pu and Zhang \cite{Pu2019} generalized it to the infinite time horizon case. Recently, Hu, Shi and Xu \cite{Hu2020} explored a control constrained stochastic LQ control problem with regime-switching, which can better reflect the random environment. Concerning the stochastic large-population problem with control constraints, to our best knowledge, it is really a new topic and there are few literatures to give the explicit decentralized strategies. Indeed,  Hu, Huang and Li \cite{Hu2018} studied a class of control constrained stochastic large-population problems with uniform minor agents where the individual control was constrained in a closed convex set. Hu, Huang and Nie \cite{Nie2018} investigated a class of LQG mixed MFGs with heterogenous input constraints, where a major agent and numerous heterogenous minor agents were embraced. In \cite{Hu2018,Nie2018}, the decentralized strategies are given explicitly through projection operators and CC system, which is a kind of nonlinear mean-field forward-backward stochastic differential equation (MF-FBSDE) with projection operator. See also \cite{ZhangLi2019} for extended works.

The current paper focuses on a class of general stochastic large-population problems with partial information, where the diffusion term of the dynamics of each agent can depend both on the state and control. To illustrate our motivations, we present the following example, which generalizes the model of Carmona, Fouque and Sun \cite{Carmona2015}.

\begin{example}\label{example}$($General Inter-Bank Borrowing and Lending Problem$)$ Considering a model consists of $N$ banks, which can lend to and borrow from each other. Meanwhile, there exist cash flows between each bank and the central bank. We denote $x_i(\cdot)$ the log-monetary reserve of bank $i$ lending to and borrowing from each other  and $u_i(\cdot)$ the corresponding control rate of borrowing from or lending to the central bank. Suppose the evolution of  $x_i(\cdot)$ is described by $($see \cite{Carmona2015} for some simple cases$)$
\begin{equation*}
\left\{
\begin{aligned}
dx_i(t)=&[Ax_i(t)+\frac{a}{N}\underset{j=1}{\overset{N}{\sum}}(x_j(t)-x_i(t))+Bu_i(t)+b]dt\\
&+[Cx_i(t)+Du_i(t)+\sigma]dW_i(t)+[\widetilde{D}u_i(t)+\widetilde{\sigma}]d\widetilde{W}_i(t),\\
x_i(0)=&~x,
\end{aligned}
\right.
\end{equation*}
where  $\{W_i(\cdot), 1\leq i \leq N\}$ and $\{\widetilde{W}_i(\cdot), 1\leq i \leq N\}$ are  two $N$-dimensional mutually independent standard Brownian motions. Here, $W_i$ and $\widetilde{W}_i$, $1\leq i \leq N$ are introduced to represent the noises of bank $i$. Due to some practical phenomena like the physical inaccessibility of some parameters, inaccuracies in measurement, discreteness of account information, bank $i$ can only observe $W_i$.   The average term $\frac{a}{N}\sum_{j=1}^N(x_j(\cdot)-x_i(\cdot))$ with $a\geq 0$ characterizes the interaction, which represents the rate of bank $i$ borrowing from or lending to other banks.
Let $x^{(N)}(\cdot)=\frac{1}{N}\sum_{j=1}^N x_{j}(\cdot)$, then we have
\begin{equation}
\left\{
\begin{aligned}\label{exstate}
dx_i(t)=&[(A-a)x_i(t)+Bu_i(t)+ax^{(N)}(t)+b]dt\\
&+[Cx_i(t)+Du_i(t)+\sigma]dW_i(t)+[\widetilde{D}u_i(t)+\widetilde{\sigma}]d\widetilde{W}_i(t),\\
x_i(0)=&~x.
\end{aligned}
\right.
\end{equation}
For $1\leq i \leq N$, bank $i$ controls its rate of borrowing from or lending to the central bank by choosing $u_i(\cdot)$ to minimize
\begin{equation}\label{excost}
\mathcal{J}_i(u(\cdot))=\frac{1}{2}\mathbb{E}\Big\{\int_0^T[\epsilon(x_i(t)-x^{(N)}(t))^2+ru_i(t)^2]dt
+c(x_i(T)-x^{(N)}(T))^2\Big\},
\end{equation}
where $u(\cdot)=(u_1(\cdot),\ldots,u_N(\cdot))$ and $\epsilon$, $r$ and $c$ are all constants with $\epsilon,c\geq 0$, $r>0$. The quadratic terms $(x_i(\cdot)-x^{(N)}(\cdot))^2$ in the running cost  and in terminal cost penalize the departure from the average. The quadratic terms $u_i(\cdot)^2$ denotes the cost of control process. Here, $u_i(\cdot)>0$ $($resp. $u_i(\cdot)<0$$)$ means that bank $i$ borrows from $($resp. lends to$)$ the central bank. Generally, the central bank does not need to absorb the funding from local ones, which suggests that $u_i(\cdot)$ often takes positive values.  Consequently, the general inter-bank borrowing and lending problem formulates a class of control constrained LQ large-population problems with partial information.
\end{example}

Inspired by above discussions, we consider a class of LQ large-population problems with partial information, where both the control constrained case and control unconstrained case are studied.
It should be noted that Carmona, Fouque and Sun \cite{Carmona2015} studied an inter-bank borrowing and lending model, which was by nature a LQ large-population problem without control constraints, and they obtained the decentralized strategies in form of state feedback via Riccati equation. As its promotion, we consider a general problem as in Example \ref{example}, where $x_i(\cdot)$ and $u_i(\cdot)$ both affect the drift and diffusion terms. Moreover, partial information structure and control constrained are also taken into consideration. Using Hamitonian approach, we can give the explicit decentralized strategies in control constrained case. When the control constraint is removed as in \cite{Carmona2015,Huang2016}, by using Riccati approach, we can represent the decentralized strategies as the feedback of filtered state.

To make the readers quickly grasp the spirit of ideas in
large-population problem, we begin with comparisons between our large-population problem and the classical stochastic differential games considered in many literatures, see e.g.
\cite{Moon2020, NieWangYu2021,Yu2015} and the reference therein. From the framework setting,  all players in the classical differential game control a common state system while in our large-population problem, each player possesses its own individual state.  Furthermore, one usually looks for the Nash equilibrium in classical differential game while
the large-population problem will result some approximate ones, since it is not implementable and not efficient to solve the large-population problem directly due to the highly complex interactions among individuals. Alternatively, the MFG provides one effective approach to determine an approximate equilibrium. In fact, the corresponding limiting problem turns out to be a class of LQ stochastic optimal control problems at bottom. 

We mention that even for the case without control constraint, our results are not trivial generalization of  \cite{Carmona2015, Huang2016}. In fact, compared with \cite{Carmona2015}, our stochastic large population problems are in the framework of partial information, although \cite{Huang2016} gave some results for stochastic large population problems with partial information, it cannot be applied to our case, since our  diffusion term can depend both on the state and control. The structure of partial information and the dependence of our diffusion term on state and control need subtle Riccati approach (as explained in the listed main contribution of the paper below).

The main contributions of this paper can be summarized as follows.
\begin{itemize}
  \item A general stochastic large-population problem with partial information is introduced. In control constrained case, by using Hamiltonian approach and convex analysis, the decentralized strategies can be given explicitly. The Hamiltonian type CC system turns out to be a new kind of  MF-FBSDEs with projection operator, where both the expectation term and conditional expectation term appear.  The well-posedness of such equation is proved by discounting method which is used to construct a contractive mapping.
  \item In control unconstrained case, by using Riccati approach, the decentralized strategies can be further represented explicitly as the feedback of filtered state. The optimal filtering equation, meanwhile, is established.
  \item A new Riccati type CC system is given (see \eqref{RCC}), which is not in the classical form. The well-posedness of this Riccati type CC system is obtained through some subtle analysis. In fact, the existence of $\widetilde{D}(\cdot)u_i(\cdot)$ in unobservable diffusion term brings essential difficulties to the solvability of Riccati type CC system. The first difficulty is that the additional term $\widetilde{D}^{\top}(\cdot)P(\cdot)\widetilde{D}(\cdot)$ cannot be combined with the original term $D^{\top}(\cdot)P(\cdot)D(\cdot)$ into a quadratic form (see \eqref{r1}), thus the first equation (equation for $P(\cdot)$) in system \eqref{RCC} is not a standard Riccati equation and then the existing results cannot be implemented. We can use modified iterative method (see \eqref{P0} and  \eqref{iterative}) and mathematical induction to prove the existence of a solution. The second difficulty is that the equation for $\Lambda(\cdot)$ in system \eqref{RCC} is not a standard Riccati equation too, since usually the inequality $\delta P(\cdot)-Q(\cdot)\geq 0$ fails. To overcome this difficulty, inspired by Yong \cite{Yong2013}, we transform the solvability of $\Lambda(\cdot)$ to the solvability of another Riccati equation which looks quite different to the transformed Riccati equation in \cite{Yong2013} due to the additional term  $\widetilde{D}^{\top}(\cdot)P(\cdot)\widetilde{D}(\cdot)$. It is interesting that we can use some algebraic inequalities (see e.g. inequality \eqref{positive}) to obtain also the well-posedness of our new transformed Riccati equation.
\end{itemize}

The rest of this paper is organized as follows. We formulate the LQ large-population problems with partial information in section \ref{sec:pre}. In section \ref{sec:cc}, we obtain the explicit decentralized strategies. Subsection \ref{subsec:1} devotes to the study of control constrained case. By using Hamiltonian approach, we get the explicit decentralized strategies through Hamiltonian type CC system, which is a new kind of MF-FBSDEs. The well-posedness results of MF-FBSDE is given and the corresponding $\varepsilon$-Nash equilibrium property is also verified. In subsection \ref{subsec:2}, we study the case without control constraint, by Riccati approach, the decentralized strategies can be further represented explicitly as the feedback of filtered state.  Subsection \ref{subsec:3} aims to showing the existence and uniqueness of a solution to Riccati type CC system.  In section \ref{sec:app}, the motivating example (Example \ref{example}) is solved. Section \ref{sec:con} concludes the paper. The well-poseness of a general kind of MF-FBSDEs is presented in Appendix A.

\section{Problem Formulation}\label{sec:pre}
Consider a large-population system which is composed of  $N$ agent $\mathcal{A}_i$, $1\leq i \leq N$. For a fixed $T>0$, let $(\Omega, \mathcal{F},\{\mathcal{F}_t\}_{0\leq t \leq T},\mathbb{P})$ be a completed filtered probability space satisfying usual conditions, on which we define two $N$-dimensional mutually independent standard Brownian motions $\{W_i(t), 1\leq i \leq N\}_{0\leq t\leq T}$ and $\{\widetilde{W}_i(t), 1\leq i \leq N\}_{0\leq t\leq T}$. Assume that $\mathcal{F}_t$ is the natural filtration generated by $\{W_i(s),\widetilde{W}_i(s),0\leq s \leq t, 1\leq i \leq N\}$ augmented by all $\mathbb{P}$-null set $\mathcal{N}$. Moreover, let $\mathcal{F}_t^i=\sigma\{W_i(s),\widetilde{W}_i(s),0\leq s \leq t\}\vee \mathcal{N}$, $\mathcal{G}_t=\{W_i(s),0\leq s \leq t, 1\leq i \leq N\}\vee \mathcal{N}$ and $\mathcal{G}_t^i=\{W_i(s),0\leq s \leq t\}\vee \mathcal{N}$.

Throughout the paper, we denote $\mathbb{R}^n$ the $n$-dimensional Euclidean space, with the usual norm and the usual inner product given by $|\cdot|$ and $\langle\cdot,\cdot\rangle$, respectively. For any vector or matrix, the superscript $\top$ denotes its transpose. $\mathcal{S}^d$ represents the set of $d\times d$-dimensional symmetric matrices, $\mathcal{S}_{+}^d$ represents the set of $d\times d$-dimensional symmetric matrices which are semi-positive,  and $I$ denotes the identity matrix. For any matrix $M\in \mathbb{R}^{n\times d}$, let $|M|=\sqrt{\textup{tr}(M^{\top}M)}$ denotes its norm, where $\textup{tr}(M^{\top}M)$ stands for the trace of $M^{\top}M$. If $M\in\mathcal{S}^d$ and $M\geq(>) 0$, we say that $M$ is semi-positive (positive) definite. For positive constant $k$, if $M\in\mathcal{S}^d$ and $M>kI$, we denote $M\gg0$. For any Euclidean space $\mathbb{M}$ and any filtration $\mathcal{V}$, if $h(\cdot):[0,T]\rightarrow \mathbb{M}$ is continuous, we denote $h(\cdot)\in C([0,T];\mathbb{M})$; if $h(\cdot):[0,T]\rightarrow \mathbb{M}$ is uniformly bounded, we denote $h(\cdot)\in L^\infty(0,T;\mathbb{M})$; if $h(\cdot):\Omega\times[0,T]\rightarrow \mathbb{M}$ is $\mathcal{V}_t$-adapted process s.t. $\mathbb{E}\int_0^T|h(t)|^2dt<\infty$, we denote $h(\cdot)\in L_{\mathcal{V}_t}^2(0,T;\mathbb{M})$.

Suppose the state of $i$-th agent $\mathcal{A}_i$ satisfies the following linear stochastic differential equation (SDE)
\begin{equation}\label{state}
\left\{
\begin{aligned}
dx_{i}(t)=&~[A(t)x_{i}(t)+B(t)u_{i}(t)+F(t)x^{(N)}(t)+b(t)]dt \\
&+[C(t)x_{i}(t)+D(t)u_{i}(t)+H(t)x^{(N)}(t)+\sigma (t)]dW_{i}(t) \\
&+[\widetilde{C}(t)x_{i}(t)+\widetilde{D}(t)u_{i}(t)+\widetilde{H}%
(t)x^{(N)}(t)+\widetilde{\sigma}(t)]d\widetilde{W}_{i}(t), \\
x_{i}(0)=&~x,%
\end{aligned}%
\right.
\end{equation}
where $x\in\mathbb{R}^n$ is the initial value, $x_i(\cdot)$ and $u_i(\cdot)$ denote the state process and control process, respectively. Moreover, we denote $x^{(N)}(\cdot):=\frac{1}{N}\sum_{i=1}^N x_{i}(\cdot)$, which characterizes the state average of all agents. The corresponding coefficients $A(\cdot),B(\cdot),F(\cdot),b(\cdot)$,\\$C(\cdot),D(\cdot),H(\cdot),\sigma(\cdot), \widetilde{C}(\cdot),\widetilde{D}(\cdot),\widetilde{H}(\cdot),\widetilde{\sigma}(\cdot)$ meet appropriate assumptions given later.

Let $\Gamma\subseteq \mathbb{R}^m$ be a nonempty closed convex set. We define centralized strategy set as $\mathcal{U}_{ad}^{c}=\{u_{i}(\cdot )~|~u_{i}(\cdot )\in L_{\mathcal{G}%
_{t}}^{2}(0,T;\Gamma )\}$ and decentralized strategy set as $\mathcal{U}_{ad}^{d,i}=\{u_{i}(\cdot )~|~u_{i}(\cdot )\in L_{\mathcal{G}%
_{t}^{i}}^{2}(0,T;\Gamma )\},~1\leq i \leq N$. Obviously, for $1\leq i \leq N$, $\mathcal{G}_t\subseteq\mathcal{F}_t$ and $\mathcal{G}_t^i\subseteq\mathcal{F}_t^i$, which means that our problem is in the setting of partial information. For simplicity, let $u(\cdot)=(u_1(\cdot),\ldots,u_N(\cdot))$ be the set of strategies of all agents and $u_{-i}(\cdot)=(u_1(\cdot),\ldots,u_{i-1}(\cdot),u_{i+1}(\cdot),\ldots,u_N(\cdot))$ be the set of strategies except for $i$-th agent. The cost functional of $\mathcal{A}_i$ is given by
\begin{equation}
\begin{aligned}
&\mathcal{J}_{i}(u_{i}(\cdot ),u_{-i}(\cdot ))\\
=&\frac{1}{2}\mathbb{E}%
\Big\{\int_{0}^{T}\big[\langle Q(t)(x_{i}(t)-x^{(N)}(t)),x_{i}(t)-x^{(N)}(t)\rangle
+\langle R(t)u_{i}(t),u_{i}(t)\rangle \big]dt \\
&\qquad\qquad+\langle G(x_{i}(T)-x^{(N)}(T)),x_{i}(T)-x^{(N)}(T)\rangle \Big\}\label{cost}.
\end{aligned}
\end{equation}%
Now, we aim to formulate the large-population problem with partial information.

\textbf{Problem (LP)} To choose strategy profile $\bar{u}(\cdot)=(\bar{u}_{1}(\cdot),\ldots ,\bar{u}_{N}(\cdot))$, where $\bar{u}_i(\cdot)\in \mathcal{U}_{ad}^{c} $, such that
\begin{equation*}
\mathcal{J}_{i}(\bar{u}_{i}(\cdot ),\bar{u}_{-i}(\cdot ))=\underset{%
u_{i}\left( \cdot \right) \in \mathcal{U}_{ad}^{c}}{\inf }\mathcal{J}%
_{i}(u_{i}(\cdot ),\bar{u}_{-i}(\cdot )),~~~1\leq i\leq N, \text{~~subjects to \eqref{state} and \eqref{cost}.}
\end{equation*}%
For $1\leq i\leq N$, if there exists such $\bar{u}_i(\cdot)$ satisfying above relationship, it's called the Nash equilibrium of Problem (LP). We denote the corresponding optimal state as $\bar{x}_i(\cdot)$. For further study, we provide the definition of $\varepsilon$-Nash equilibrium.
\begin{definition}
The strategy profile $\bar{u}(\cdot)=(\bar{u}_1(\cdot),\ldots,\bar{u}_N(\cdot))$, where $\bar{u}_i(\cdot)\in \mathcal{U}_{ad}^c$, $1\leq i \leq N$, is called an $\varepsilon$-Nash equilibrium with respect to the cost $\mathcal{J}_i$,  if there exists an $\varepsilon\geq 0$, such that for any $1\leq i \leq N$
\begin{equation*}
\mathcal{J}_{i}(\bar{u}_{i}(\cdot ),\bar{u}_{-i}(\cdot ))\leq \mathcal{J}_{i}(u_{i}(\cdot ),\bar{u}_{-i}(\cdot ))+\varepsilon,
\end{equation*}
where $u_{i}(\cdot )\in \mathcal{U}_{ad}^c$ is any alternative strategy applied by $\mathcal{A}_i$.
\end{definition}

\begin{remark}
Due to the coupling structure, the centralized strategies $(i.e.~u_i(\cdot)\in \mathcal{U}_{ad}^c )$  which need the information of other agents are difficult to obtain in the noncooperative game. In this paper, we use the MFG method to derive the decentralized strategies $(i.e.~u_i(\cdot)\in \mathcal{U}_{ad}^{d,i})$, which turn out to satisfy $\varepsilon$-Nash equilibrium property.
\end{remark}

We introduce the following assumptions for the coefficients. If no confusion happens, we will omit the dependence on time $t$.

\textup{(H1)} The coefficients of state equation satisfy
\begin{equation*}
A,C,\widetilde{C},F,H,\widetilde{H}, b,\sigma ,%
\widetilde{\sigma }\in L^{\infty }(0,T;\mathbb{R}^{n}),\quad
B,D,\widetilde{D}\in L^{\infty }(0,T;\mathbb{R}^{n\times m}),
\end{equation*}

\textup{(H2)} The coefficients of cost functional satisfy
\begin{equation*}
Q\in L^{\infty }(0,T;\mathcal{S}^{n}),\quad R\in L^{\infty }(0,T;\mathcal{S%
}^{m}),\quad G\in \mathcal{S}^{n},\quad
Q\geq 0,\quad R\gg0,\quad G\geq 0.
\end{equation*}

\begin{remark}
We mention that it is not enough to solve a LQ stochastic classical differential game problems under (H1) and (H2).  For example,   \cite{NieWangYu2021} used the stochastic maximum principle to study the corresponding Nash equilibrium, where a complex fully coupled FBSDE including one SDE and two BSDEs was involved. To guarantee the existence and uniqueness of a solution for this FBSDE, in addition to (H1)-(H2), some other assumptions (see Assumption 3 in \cite{NieWangYu2021}) were needed. However, the large-population problem is essentially different from the classical differential game as explained in the introduction. The MFG provides one effective
approach to determine an approximate equilibrium for large-population problem and the corresponding limiting problem is a LQ stochastic optimal control problem (see Section \ref{sec:cc}). Based on these observations, assumptions (H1) and (H2) are enough to solve our problem in some sense.
\end{remark}

\section{$\varepsilon$-Nash Equilibrium for Problem (LP)}\label{sec:cc}

As analyzed earlier, since the highly complicated coupling structure, it is tricky to solve the large-population problem directly. In other words, it is intractable and inefficient to obtain the centralized strategies of Problem (LP). One possible approach is to adopt MFG method to discuss corresponding decentralized strategies. In subsection \ref{subsec:1}, we use Hamiltonian approach to obtain the explicit decentralized strategies in control constrained case. The Hamiltonian type CC system is derived and its well-posedness is given. Moreover, we show that the decentralized strategies satisfy the $\varepsilon$-Nash equilibrium property. In subsection \ref{subsec:2}, we show that in control unconstrained case,  the decentralized strategies can be further represented explicitly as the feedback of filtered state through Riccati approach. The well-posedness of associated Riccati type CC system is explored.
\subsection{Control Constrained Case: Hamiltonian Approach}\label{subsec:1}
Let $\Gamma\subset\mathbb{R}^m$ be a nonempty closed convex set. When $\Gamma\neq\mathbb{R}^m$, Problem (LP) turns out to be a class of control constrained large-population problems with partial information. We will analyze the asymptotic behavior of large-population system when the number of agents tends to infinity. To start with, we suppose that the coupling term $\bar{x}^{N}(\cdot)=\frac{1}{N}\sum_{i=1}^N \bar{x}_{i}(\cdot)$ is approximated by $l(\cdot)$, which is a frozen limiting term and will be determined later. We introduce an auxiliary limiting system defined as
\begin{equation}
\left\{
\begin{aligned}
dz_{i}(t)=&~[A(t)z_{i}(t)+B(t)u_{i}(t)+F(t)l(t)+b(t)]dt \\
&+[C(t)z_{i}(t)+D(t)u_{i}(t)+H(t)l(t)+\sigma (t)]dW_{i}(t) \\
&+[\widetilde{C}(t)z_{i}(t)+\widetilde{D}(t)u_{i}(t)+\widetilde{H}(t)l(t)+%
\widetilde{\sigma }(t)]d\widetilde{W}_{i}(t), \\
z_{i}(0)=&~x,\label{lstate}%
\end{aligned}%
\right.
\end{equation}
and the limiting cost functional is given by
\begin{equation}
\begin{aligned}
J_i(u_{i}(\cdot )) =&\frac{1}{2}\mathbb{E}\Big\{\int_{0}^{T}\big[\langle
Q(t)(z_{i}(t)-l(t)),z_{i}(t)-l(t)\rangle +\langle
R(t)u_{i}(t),u_{i}(t)\rangle \big]dt \\
&\qquad\qquad+\langle G(z_{i}(T)-l(T)),z_{i}(T)-l(T)\rangle \Big\}\label{lcost}.
\end{aligned}
\end{equation}
Then, we formulate the limiting large-population problem with partial information as follows.

\textbf{Problem(LLP)} For each agent $\mathcal{A}_i$, $1\leq i \leq N$, to find $\bar{u}_{i}(\cdot )\in \mathcal{U}_{ad}^{d,i}$ such that
\begin{equation*}
J_{i}(\bar{u}_{i}(\cdot ))=\underset{u_{i}(\cdot )\in \mathcal{U}_{ad}^{d,i}}{\inf }J_{i}(u_{i}(\cdot )), \text{\qquad subjects to \eqref{lstate} and \eqref{lcost}.}
\end{equation*}%
If there exists  $\bar{u}_{i}(\cdot )\in \mathcal{U}_{ad}^{d,i}$ satisfying above relationship, it's the so-called decentralized strategy, and the corresponding $\bar{z}_i(\cdot)$ denotes the optimal decentralized trajectory. With the help of the frozen limiting term, the original large-population problem (LP) is transformed into a decoupled stochastic LQ control problem with partial information, which can be addressed with some classical methods of optimal control and filtering technique.\\
Indeed, we define the Hamiltonian function $\mathcal{H}_i:\Omega\times\mathbb{R}^n\times\mathbb{R}^n\times\mathbb{R}^n\times\mathbb{R}^n\times\Gamma\rightarrow\mathbb{R}$ by
\begin{equation*}
\begin{aligned}
&\mathcal{H}_{i}(t,p_{i},k_{i},\widetilde{k}_{i},z_{i},u_{i}) =\langle
p_{i},Az_{i}+Bu_{i}+Fl+b\rangle +\langle k_{i},Cz_{i}+Du_{i}+Hl+\sigma
\rangle \\
&\qquad\qquad \qquad+\langle \widetilde{k}_{i},\widetilde{C}z_{i}+\widetilde{D}u_{i}+%
\widetilde{H}l+\widetilde{\sigma }\rangle -\frac{1}{2}\langle
Q(z_{i}-l),z_{i}-l\rangle -\frac{1}{2}\langle Ru_{i},u_{i}\rangle,
\end{aligned}
\end{equation*}
and we introduce the following adjoint equation 
\begin{equation}
\left\{
\begin{aligned}
dp_{i}(t)=&-[A^{\top }(t)p_{i}(t)+C^{\top }(t)k_{i}\left( t\right) +%
\widetilde{C}^{\top }(t)\widetilde{k}_{i}\left( t\right) -Q(t)(\bar{z}_i\left(
t\right) -l(t))]dt\\
& +k_{i}\left( t\right) dW_{i}(t)+\widetilde{k}_{i}\left( t\right) d\widetilde{%
W}_{i}(t), \\
p_{i}(T)=&-G(\bar{z}_{i}\left( T\right) -l(T)).\label{adjoint}%
\end{aligned}
\right.
\end{equation}%
Then, we have the following maximum principle of Problem (LLP), whose proof is trivial and thus is omitted.
\begin{theorem}
Let \textup{(H1)} and \textup{(H2)} hold. For $1\leq i \leq N$ and any fixed $l(\cdot)$, suppose $\bar{u}_{i}(\cdot )$ is the decentralized strategy of Problem (LLP), $\bar{z}_{i}(\cdot )$ is the optimal decentralized trajectory, and $(\bar{p}_i(\cdot),\bar{k}_i(\cdot),\bar{\widetilde{k}}_i(\cdot))$ solves \eqref{adjoint}, then
\begin{equation*}
\mathbb{E}[\langle \frac{\partial \mathcal{H}_{i}}{\partial u_{i}}(t,\bar{p}_{i},\bar{k}_{i},%
\bar{\widetilde{k}}_{i},\bar{z}_{i},\bar{u}_{i}),u_{i}-\bar{u}_{i}(t)\rangle |\mathcal{G}%
_{t}^{i}]\leq 0,~~\text{for any}~u_{i}\in \Gamma,~\text{a.e.}
~t\in[0,T], ~\mathbb{P}\text{-a.s.}
\end{equation*}%

\end{theorem}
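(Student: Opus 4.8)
The plan is to treat Problem (LLP) as a convex optimal control problem and apply the classical convex variational (convex perturbation) method, adapted to the partial-information constraint. Under (H2) we have $Q\ge 0$, $G\ge 0$, $R\gg 0$, and the limiting state $z_i(\cdot)$ depends affinely on $u_i(\cdot)$ through \eqref{lstate}; this convex structure is what makes a single-directional perturbation argument clean. Fix the optimal pair $(\bar u_i,\bar z_i)$ and an arbitrary $u_i\in\mathcal U_{ad}^{d,i}$. Because $\Gamma$ is convex and both $u_i$ and $\bar u_i$ are $\mathcal G_t^i$-adapted and $\Gamma$-valued, the perturbation $u_i^\theta:=\bar u_i+\theta(u_i-\bar u_i)$ lies in $\mathcal U_{ad}^{d,i}$ for every $\theta\in[0,1]$. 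By linearity of \eqref{lstate}, the associated state is exactly $z_i^\theta=\bar z_i+\theta\eta_i$, where $\eta_i$ solves the variational equation
\begin{equation*}
\left\{
\begin{aligned}
d\eta_i=&~[A\eta_i+B(u_i-\bar u_i)]dt+[C\eta_i+D(u_i-\bar u_i)]dW_i\\
&+[\widetilde C\eta_i+\widetilde D(u_i-\bar u_i)]d\widetilde W_i,\\
\eta_i(0)=&~0.
\end{aligned}
\right.
\end{equation*}

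Next I would compute the Gâteaux derivative of $J_i$ at $\bar u_i$. Since $z_i^\theta$ and $u_i^\theta$ are affine in $\theta$, the derivative is exact, and the optimality $J_i(\bar u_i)\le J_i(u_i^\theta)$ gives
\begin{equation*}
0\le\frac{d}{d\theta}\big|_{\theta=0^+}J_i(u_i^\theta)=\mathbb E\Big\{\int_0^T\big[\langle Q(\bar z_i-l),\eta_i\rangle+\langle R\bar u_i,u_i-\bar u_i\rangle\big]dt+\langle G(\bar z_i(T)-l(T)),\eta_i(T)\rangle\Big\}.
\end{equation*}
The crux is to eliminate the terms containing $\eta_i$ by duality: applying It\^o's formula to $\langle\bar p_i,\eta_i\rangle$ along \eqref{adjoint} and the variational equation, the terms involving $A$, $C$ and $\widetilde C$ cancel pairwise, and taking expectation kills the stochastic integrals, yielding
\begin{equation*}
\mathbb E\langle G(\bar z_i(T)-l(T)),\eta_i(T)\rangle=-\mathbb E\int_0^T\big[\langle Q(\bar z_i-l),\eta_i\rangle+\langle B^\top\bar p_i+D^\top\bar k_i+\widetilde D^\top\bar{\widetilde k}_i,u_i-\bar u_i\rangle\big]dt.
\end{equation*}
Substituting this into the optimality inequality, all $\eta_i$-terms cancel, and recognizing $\frac{\partial\mathcal H_i}{\partial u_i}=B^\top p_i+D^\top k_i+\widetilde D^\top\widetilde k_i-Ru_i$ leaves the integrated stationarity inequality
\begin{equation*}
\mathbb E\int_0^T\Big\langle\frac{\partial\mathcal H_i}{\partial u_i}(t,\bar p_i,\bar k_i,\bar{\widetilde k}_i,\bar z_i,\bar u_i),u_i(t)-\bar u_i(t)\Big\rangle dt\le0,\qquad\forall\,u_i\in\mathcal U_{ad}^{d,i}.
\end{equation*}

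The step I expect to carry the real (though still routine) weight is passing from this integrated inequality to the pointwise conditional form in the statement, and this is exactly where the partial-information structure enters. Since $u_i(t)-\bar u_i(t)$ is $\mathcal G_t^i$-measurable, the tower property lets me replace $\frac{\partial\mathcal H_i}{\partial u_i}$ by its $\mathcal G_t^i$-conditional expectation inside the integral. To localize, I would fix $v\in\Gamma$, take an arbitrary measurable set $A\subseteq[0,T]\times\Omega$ whose indicator $\mathbf 1_A$ is $\mathcal G_t^i$-adapted, and test with the spliced control $u_i=v\,\mathbf 1_A+\bar u_i\,\mathbf 1_{A^c}$, which is admissible precisely because $\Gamma$ is convex (so it stays $\Gamma$-valued and $\mathcal G_t^i$-adapted). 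The arbitrariness of $A$ then forces $\langle\mathbb E[\frac{\partial\mathcal H_i}{\partial u_i}\,|\,\mathcal G_t^i],v-\bar u_i(t)\rangle\le0$ for a.e.\ $t$, $\mathbb P$-a.s., which is the claimed inequality (using that $v,\bar u_i(t)$ are $\mathcal G_t^i$-measurable); extending from a countable dense subset of $\Gamma$ to every $v\in\Gamma$ by continuity of the affine map $v\mapsto\langle\cdot,v-\bar u_i(t)\rangle$ finishes the argument. The only genuine subtleties are justifying the interchange of differentiation and expectation, handled by the a priori $L^2$-estimates for \eqref{lstate} and the variational equation together with $R\in L^\infty$, and ensuring the localization is measurable-selection valid for the closed convex $\Gamma$; this is why the authors regard the proof as standard and omit it.
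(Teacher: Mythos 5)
Your proposal is correct, and it is exactly the standard argument the authors had in mind: the paper omits this proof as ``trivial,'' and your convex perturbation $u_i^\theta=\bar u_i+\theta(u_i-\bar u_i)$ (admissible by convexity of $\Gamma$), the duality computation via It\^o's formula applied to $\langle\bar p_i,\eta_i\rangle$ against the adjoint equation \eqref{adjoint}, and the localization through spliced controls plus the tower property reproduce the intended proof, including the correct identification $\frac{\partial\mathcal H_i}{\partial u_i}=B^\top p_i+D^\top k_i+\widetilde D^\top\widetilde k_i-Ru_i$ and the passage to the $\mathcal G_t^i$-conditional pointwise form. One negligible slip: the spliced control $v\,\mathbf 1_A+\bar u_i\,\mathbf 1_{A^c}$ is admissible simply because both pieces are $\Gamma$-valued and $\mathcal G_t^i$-adapted (convexity of $\Gamma$ is needed only for the perturbation $u_i^\theta$, not for the splicing).
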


According to the adaption of strategies, we have
\begin{equation*}
\langle \mathbb{E}[B^{\top }(t)\bar{p}_{i}(t)+D^{\top }(t)\bar{k}_{i}(t)+\widetilde{D}%
^{\top }(t)\bar{\widetilde{k}}_{i}(t)-R(t)\bar{u}_{i}(t)|\mathcal{G}%
_{t}^{i}],u_{i}-\bar{u}_{i}(t)\rangle \leq 0,
\end{equation*}%
and by using convex analysis, the decentralized strategies read as
\begin{equation}\label{gcontrol}
\bar{u}_{i}(t)=\mathbf{P}_{\Gamma }[R(t)^{-1}(B^{\top }(t)\mathbb{E}[\bar{p}%
_{i}(t)|\mathcal{G}_{t}^{i}]+D^{\top }(t)\mathbb{E}[\bar{k}_{i}(t)|\mathcal{G%
}_{t}^{i}]+\widetilde{D}^{\top }(t)\mathbb{E}[\bar{\widetilde{k}}_{i}(t)|%
\mathcal{G}_{t}^{i}])],
\end{equation}%
where \textbf{P}$[\cdot]$ is a projection operator from $\mathbb{R}^m$ to $\Gamma$ under the norm $||x||_{R}^{2}=\langle \langle x,x\rangle \rangle =\langle R^{\frac{1}{2}
}x,R^{\frac{1}{2}}x\rangle$ . Moreover, the stochastic Hamiltonian system can be rewritten as the following nonlinear MF-FBSDE with projection operator:
\begin{equation}
\left\{
\begin{aligned}
d\bar{z}_{i}(t)=&\{A(t)\bar{z}_{i}(t)+B(t)\mathbf{P}_{\Gamma
}[R(t)^{-1}(B^{\top }(t)\mathbb{E}[\bar{p}_{i}(t)|\mathcal{G}%
_{t}^{i}]+D^{\top }(t)\mathbb{E}[\bar{k}_{i}(t)|\mathcal{G}_{t}^{i}] \\
&+\widetilde{D}^{\top }(t)\mathbb{E}[\bar{\widetilde{k}}_{i}(t)|\mathcal{G}%
_{t}^{i}])]+F(t)l(t)+b(t)\}dt\\
&+\{C(t)\bar{z}_{i}(t)+D(t)\mathbf{P}_{\Gamma }[R(t)^{-1}(B^{\top }(t)\mathbb{E}[\bar{p}_{i}(t)|%
\mathcal{G}_{t}^{i}]+D^{\top }(t)\mathbb{E}[\bar{k}_{i}(t)|\mathcal{G}%
_{t}^{i}] \\
&+\widetilde{D}^{\top }(t)\mathbb{E}[\bar{\widetilde{k}}_{i}(t)|\mathcal{G}%
_{t}^{i}])]+H(t)l(t)+\sigma (t)\}dW_{i}(t)\\
&+\{\widetilde{C}(t)\bar{z}_{i}(t)+\widetilde{D}(t)\mathbf{P}_{\Gamma }[R(t)^{-1}(B^{\top }(t)\mathbb{E}[\bar{p%
}_{i}(t)|\mathcal{G}_{t}^{i}]+D^{\top }(t)\mathbb{E}[\bar{k}_{i} (t)|%
\mathcal{G}_{t}^{i}] \\
&+\widetilde{D}^{\top }(t)\mathbb{E}[\bar{\widetilde{k}}_{i}(t)|\mathcal{G}%
_{t}^{i}])]+\widetilde{H}(t)l(t)+\widetilde{\sigma }(t)\}d\widetilde{W}%
_{i}(t), \\
d\bar{p}_{i}(t)=&-[A^{\top }(t)\bar{p}_{i}(t)+C^{\top }(t)\bar{k}_{i} \left(
t\right) +\widetilde{C}^{\top }(t)\bar{\widetilde{k}}_{i}\left( t\right)
-Q(t)(\bar{z}_{i}\left( t\right) -l(t))]dt\\
&+\bar{k}_{i}\left( t\right) dW_{i}(t)+\bar{\widetilde{k}}_{i}
\left( t\right) d\widetilde{W}_{i}(t), \\
\bar{z}_{i}(0)=&x,~\bar{p}_{i}(T)=-G(\bar{z}_{i}\left( T\right) -l(T)).\label{Hamiltion}
\end{aligned}
\right.
\end{equation}%

When $N\rightarrow\infty$, we would like to approximate $x_i(\cdot)$ by $z_i(\cdot)$, thus $\frac{1}{N}\sum_{i=1}^N x_{i}(\cdot)$ is approximated by $\frac{1}{N}\sum_{i=1}^N z_{i}(\cdot)$. Recalling that $z_i(\cdot)$ and $z_j(\cdot)$ are independent identically distributed (i.i.d), for $1\leq i,j\leq N$, $i\neq j$. Consequently, by the strong law of large number, it should follows that
\begin{equation}\label{limit}
l(\cdot)=\underset{N\rightarrow \infty }{\lim }\frac{1}{N}\overset{N}{\underset{%
i=1}{\sum }}\bar{z}_{i}(\cdot )=\mathbb{E}[\bar{z}_{i}(\cdot )].
\end{equation}
Thus, by substituting \eqref{limit} into \eqref{Hamiltion}, we derive the following Hamiltonian type CC system, which is also a nonlinear MF-FBSDE with projection operator
\begin{equation}
\left\{
\begin{aligned}\label{CC}
d\bar{z}_{i}(t)=&\{A(t)\bar{z}_{i}(t)+B(t)\mathbf{P}_{\Gamma
}[R(t)^{-1}(B^{\top }(t)\mathbb{E}[\bar{p}_{i}(t)|\mathcal{G}%
_{t}^{i}]+D^{\top }(t)\mathbb{E}[\bar{k}_{i}(t)|\mathcal{G}_{t}^{i}] \\
&+\widetilde{D}^{\top }(t)\mathbb{E}[\bar{\widetilde{k}}_{i}(t)|\mathcal{G}%
_{t}^{i}])]+F(t)\mathbb{E}[\bar{z}_{i}(t)]+b(t)\}dt\\
&+\{C(t)\bar{z}_{i}(t)+D(t)\mathbf{P}_{\Gamma }[R(t)^{-1}(B^{\top }(t)\mathbb{E}[\bar{p}_{i}(t)|%
\mathcal{G}_{t}^{i}]+D^{\top }(t)\mathbb{E}[\bar{k}_{i}(t)|\mathcal{G}%
_{t}^{i}] \\
&+\widetilde{D}^{\top }(t)\mathbb{E}[\bar{\widetilde{k}}_{i}(t)|\mathcal{G}%
_{t}^{i}])]+H(t)\mathbb{E}[\bar{z}_{i}(t)]+\sigma (t)\}dW_{i}(t)\\
&+\{\widetilde{C}(t)\bar{z}_{i}(t)
+\widetilde{D}(t)\mathbf{P}_{\Gamma }[R(t)^{-1}(B^{\top }(t)\mathbb{E}[\bar{p%
}_{i}(t)|\mathcal{G}_{t}^{i}]+D^{\top }(t)\mathbb{E}[\bar{k}_{i} (t)|%
\mathcal{G}_{t}^{i}] \\
&+\widetilde{D}^{\top }(t)\mathbb{E}[\bar{\widetilde{k}}_{i}(t)|\mathcal{G}%
_{t}^{i}])]+\widetilde{H}(t)\mathbb{E}[\bar{z}_{i}(t)]+\widetilde{\sigma }(t)\}d\widetilde{W}%
_{i}(t), \\
d\bar{p}_{i}(t)=&-\{A^{\top }(t)\bar{p}_{i}(t)+C^{\top }(t)\bar{k}_{i} \left(
t\right) +\widetilde{C}^{\top }(t)\bar{\widetilde{k}}_{i}\left( t\right)
-Q(t)(\bar{z}_{i}\left( t\right)-\mathbb{E}[\bar{z}_{i}(t)]\}dt\\
&+\bar{k}_{i}\left( t\right) dW_{i}(t)+\bar{\widetilde{k}}_{i}
\left( t\right) d\widetilde{W}_{i}(t), \\
\bar{z}_{i}(0)=&~x,~\bar{p}_{i}(T)=-G(\bar{z}_{i}\left( T\right) -\mathbb{E}[\bar{z}_{i}(T)]),
\end{aligned}
\right.
\end{equation}%

\begin{remark}
The well-posedness of FBSDEs is of great significance in stochastic optimal control problems. The readers are referred to \cite{Antonelli1993} for the local case and to  \cite{Ma1994,Hu1995,Peng1999,Pardoux1999,Ma1999,Ma2015}, etc. for global cases.  There also exist many literatures on the well-posedness of MF-FBSDEs, for example, see \cite{Bensoussan2015,Ahuja2019,Carmona2018,Hu2018,Nie2018}.  We emphasize that our MF-FBSDE \eqref{CC} cannot be covered by above listed literatures since it includes the expectation term $\mathbb{E}[\cdot]$, the conditional expectation term $\mathbb{E}[\cdot|\mathcal{G}_t^i]$ and projection operator, thus its well-posedness in global case is not obvious.

Let us compare our MF-FBSDE with the counterpart in most relevant work \cite{Hu2018,Nie2018}. In \cite{Hu2018}, the authors studied a class of control constrained LQ MFGs where the diffusion term of each agent contains only control variable and nonhomogeneous part, thus for the study of the well-posedness of their MF-FBSDE with projection operator, the monotonicity condition holds and the continuity method works.  As a contrast, in our work, the diffusion term of individual agent depends both on the state and control, it causes that the monotonicity condition fails and the continuity method does not work. Moreover, we mention that \cite{Hu2018} and \cite{Nie2018} are all concerned with the  control constrained large-population problems in the full information case. By comparison, we place ourselves in the framework of partial information, which cause some essential differences between our MF-FBSDE for the CC condition and the previous ones in  \cite{Hu2018} and \cite{Nie2018}.  In fact, our MF-FBSDE will include simultaneously the expectation term, the conditional expectation term and projection operator, thus it will need some additional efforts to deal with  the simultaneous existence of expectation and conditional expectation. 
\end{remark}

However, inspired by \cite{Nie2018}, we can also use the discounting method proposed by \cite{Pardoux1999} to show the well-posedness of a kind of general MF-FBSDEs $($see \eqref{MF} below$)$ which include \eqref{CC} as a special case. To make our current paper self-contained, we will study  this kind of general MF-FBSDEs in the following.

Let $(\Omega, \mathcal{F},\mathbb{P})$ be a completed filtered probability space satisfying usual conditions, and $W$ and $\widetilde{W}$ be two $d$-dimensional mutually independent standard Brownian motions. Denote $\mathcal{F}_t^{W,\widetilde{W}}$ (resp. $\mathcal{F}_t^W$) as the natural filtration generated by $\{W(s), \widetilde{W}(s),$$ 0\leq s \leq t \}$ (resp. $\{W(s),0\leq s \leq t \}$) and augmented by all $\mathbb{P}$-null set.
Let us consider the following MF-FBSDE
\begin{equation}\label{MF}
\left\{
\begin{aligned}
dX(t)=&~b(t,\Theta(t))dt+\sigma(t,\Theta(t)) dW(t) +\widetilde{\sigma}(t,\Theta(t))d\widetilde{W}(t), \\
dY(t)=&-f(t,\Theta(t))dt+Z(t)dW(t)+\widetilde{Z}(t)d\widetilde{W}(t),\\
X(0)=&~x,~Y(T)=~g(X(T),\mathbb{E}[X(T)]),%
\end{aligned}
\right.
\end{equation}
where $\Theta(t)=(X(t),\mathbb{E}[X(t)],Y(t),\mathbb{E}[Y(t)|\mathcal{F}^W_t],Z(t),\mathbb{E}[Z(t)|\mathcal{F}^W_t],
\widetilde{Z}(t),\mathbb{E}[\widetilde{Z}(t)|\mathcal{F}^W_t])$. Here, $X(\cdot)$, $Y(\cdot)$, $Z(\cdot)$, $\widetilde{Z}(\cdot)$ take value in $\mathbb{R}^n$, $\mathbb{R}^m$, $\mathbb{R}^{m\times d}$, $\mathbb{R}^{m\times d}$, respectively. The coefficients $b,\sigma,\widetilde{\sigma}$ and $f$ are defined on $\Omega\times[0,T]\times\mathbb{R}^n\times\mathbb{R}^n\times\mathbb{R}^m\times\mathbb{R}^m\times \mathbb{R}^{m\times d}\times \mathbb{R}^{m\times d}\times \mathbb{R}^{m\times d}\times \mathbb{R}^{m\times d}$, such that $b(\cdot,\cdot,x,\alpha,y,\beta,z,\gamma,\tilde{z},\tilde{\gamma})$, $\sigma(\cdot,\cdot,x,\alpha,y,\beta,z,\gamma,\tilde{z},\tilde{\gamma})$, $\widetilde{\sigma}(\cdot,\cdot,x,\alpha,y,\beta,z,\gamma,\tilde{z},\tilde{\gamma})$ and $f(\cdot,\cdot,x,\alpha,y,\beta,z,\gamma,\tilde{z},\tilde{\gamma})$ are all $\mathcal{F}_t^{W,\widetilde{W}}$ progressively measurable processes, for any fixed $(x,\alpha,y,\beta,z,\gamma,\tilde{z},\tilde{\gamma})$. The coefficient $g$ is defined on $\Omega\times \mathbb{R}^n \times \mathbb{R}^n$ and $g(\cdot,x,\alpha)$ is $\mathcal{F}_T^{W,\widetilde{W}}$-measurable, for any fixed $(x,\alpha)$. Suppose that the coefficients $b,\sigma,\widetilde{\sigma},f$ and $g$ are all continuous with respect to $(x,\alpha,y,\beta,z,\gamma,\tilde{z},\tilde{\gamma})$. To guarantee the existence and uniqueness of a solution to \eqref{MF}, we give the following assumptions on the coefficients.

For any $t,x,x_{i},y,y_{i},\alpha,\alpha_i,\beta,\beta_i,z,z_i,\gamma,\gamma_i,\tilde{z},\tilde{z}_i,\tilde{\gamma},\tilde{\gamma}_i$, $i=1,2$, we denote $\Delta \phi=\phi_1-\phi_2$, where $\phi=x,y,\alpha,\beta,\gamma,z,\tilde{z},\tilde{\gamma}$.

\textup{(A1)} There exist $\lambda _{1},\lambda _{2}\in \mathbb{R}$ such
that
\begin{equation*}
\begin{aligned}
\langle b(t,x_{1},\alpha,y,\beta,z,\gamma,\tilde{z},\tilde{\gamma})-b(t,x_{2},y,\alpha,\beta,z,\gamma,\tilde{z},\tilde{\gamma}),\Delta x\rangle \leq& \lambda
_{1}|\Delta x|^{2},\\
\langle f(t,x,\alpha,y_1,\beta,z,\gamma,\tilde{z},\tilde{\gamma})-f(t,x,\alpha,y_2,\beta,z,\gamma,\tilde{z},\tilde{\gamma}),\Delta y\rangle \leq &\lambda
_{2}|\Delta y|^{2}.
\end{aligned}
\end{equation*}

\textup(A2) There exist positive constants $\rho$ and $\rho_i,\mu_i$, $i=1,2,\ldots,7$, such that
\begin{equation*}
\begin{aligned}
|b(t,x,\alpha,y,\beta,z,\gamma,\tilde{z},\tilde{\gamma})|\leq& |b(t,0,\alpha,y,\beta,z,\gamma,\tilde{z},\tilde{\gamma})|+\rho (1+|x|),\\
|f(t,x,\alpha,y,\beta,z,\gamma,\tilde{z},\tilde{\gamma})|\leq& |f(t,x,\alpha,0,\beta,z,\gamma,\tilde{z},\tilde{\gamma})|+\rho (1+|y|),
\end{aligned}
\end{equation*}%
\qquad\qquad and
\begin{equation*}
\begin{aligned}
&|b(t,x,\alpha_1,y_1,\beta_1,z_1,\gamma_1,\tilde{z}_1,\tilde{\gamma}_1)-b(t,x,\alpha_2,y_2,\beta_2,z_2,\gamma_2,\tilde{z}_2,\tilde{\gamma}_2)|\\
\leq &~\rho
_{1}|\Delta\alpha|+\rho _{2}|\Delta y|+\rho _{3}|\Delta\beta|+\rho _{4}|\Delta z|+\rho _{5}|\Delta \gamma|+\rho _{6}|\Delta\tilde{z}|+\rho _{7}|\Delta\tilde{\gamma}|,\\
&|f(t,x_1,\alpha_1,y,\beta_1,z_1,\gamma_1,\tilde{z}_1,\tilde{\gamma}_1)-f(t,x_2,\alpha_2,y,\beta_2,z_2,\gamma_2,\tilde{z}_2,\tilde{\gamma}_2)|\\
\leq &~\mu_1|\Delta x|+\mu_{2}|\Delta \alpha|+\mu _{3}|\Delta\beta|+\mu _{4}|\Delta z|
+\mu_{5}|\Delta\gamma|+\mu _{6}|\Delta\tilde{z}|+\mu _{7}|\Delta\tilde{\gamma}|.
\end{aligned}
\end{equation*}%

\textup(A3) There exist positive constants $w_i$ and $\kappa_i$, $i=1,2,\ldots,8$, such that
\begin{equation*}
\begin{aligned}
&|\sigma (t,x_1,\alpha_1,y_1,\beta_1,z_1,\gamma_1,\tilde{z}_1,\tilde{\gamma}_1)-\sigma
(t,x_2,\alpha_2,y_2,\beta_2,z_2,\gamma_2,\tilde{z}_2,\tilde{\gamma}_2)|^{2}\\
\leq &~ w_1^{2}|\Delta x|^{2}+w_2^{2}|\Delta\alpha|^2+w_3^2|\Delta y|^2+w_4^2|\Delta\beta|^2+w_5^2|\Delta z|^2+w_6^2|\Delta \gamma|^2+w_7^2|\Delta\tilde{z}|^2+w_8^2|\Delta\tilde{\gamma}|^2,\\
&|\widetilde{\sigma} (t,x_1,\alpha_1,y_1,\beta_1,z_1,\gamma_1,\tilde{z}_1,\tilde{\gamma}_1)-\widetilde{\sigma}
(t,x_2,\alpha_2,y_2,\beta_2,z_2,\gamma_2,\tilde{z}_2,\tilde{\gamma}_2)|^{2}\\
\leq &~ \kappa_1^{2}|\Delta x|^{2}+\kappa_2^{2}|\Delta\alpha|^2+\kappa_3^2|\Delta y|^2+\kappa_4^2|\Delta \beta|^2+\kappa_5^2|\Delta z|^2+\kappa_6^2|\Delta\gamma|^2+\kappa_7^2|\Delta\tilde{z}|^2+\kappa_8^2|\Delta\tilde{\gamma}|^2.
\end{aligned}
\end{equation*}%

\textup(A4) There exist positive constants $\rho_8$ and $\rho_9$ such that
\begin{equation*}
|g(x_1,\alpha_1)-g(x_2,\alpha_2)|^2\leq \rho_8^2 |\Delta x|^2+ \rho_9^2|\Delta\alpha|^2.
\end{equation*}

\textup{(A5)} For $\textbf{0}=(0,0,0,0,0,0,0,0)$, it holds
$
\mathbb{E}\int_{0}^{T}(|b(t,\textbf{0})|^{2}+|\sigma
(t,\textbf{0})|^{2}+|\widetilde{\sigma}
(t,\textbf{0})|^{2}+|f(t,\textbf{0})|^{2})dt+\mathbb{E}|g(0,0)|^{2}<+\infty.
$
\smallskip

Then, we can formulate the well-posedness result for \eqref{MF}, whose proof is given in Appendix \ref{appendix}.

\begin{theorem}\label{wellposeness}
Let \textup{(A1)}-\textup{(A5)} hold. If there exists a constant $\theta_0$, which depends on $\mu_i$, $\rho_1,\rho_8,\rho_9$,$w_1,w_2$,$\kappa_1,\kappa_2$ and $\lambda_1,\lambda_2$, $(i=1,\ldots,7)$ and $T$ such that when $\rho_j,w_{\tau},\kappa_{\tau}\in[0,\theta_0)$, $(j=2,\ldots,7)$ and $(\tau=3,\ldots,8)$, then there exists a unique adapted solution $(X(\cdot),Y(\cdot),Z(\cdot),\widetilde{Z}(\cdot))\in L_{\mathcal{F
}_{t}^{W,\widetilde{W}}}^{2}(0,T;\mathbb{R}^{n}\times \mathbb{R}^{m}\times \mathbb{R}^{m\times d}\times \mathbb{R}^{m\times d})$ to MF-FBSDE \eqref{MF}. Moreover, if $2(\lambda_1+\lambda_2)<-2\rho_1-2\mu_3-(\mu_4+\mu_5)^2-(\mu_6+\mu_7)^2-w_1^2-w_2^2-\kappa_1^2-\kappa_2^2$, there exists a constant $\theta_1$, which depends on $\mu_i$, $\rho_1,\rho_8,\rho_9$,$w_1,w_2$,$\kappa_1,\kappa_2$ and $\lambda_1,\lambda_2$, $(i=1,\ldots,7)$ but does not depend on $T$, such that when $\rho_j,w_{\tau},\kappa_{\tau}\in[0,\theta_1)$, $(j=2,\ldots,7)$ and $(\tau=3,\ldots,8)$, there exists a unique adapted solution $(X(\cdot),Y(\cdot),Z(\cdot),\widetilde{Z}(\cdot))\in L_{\mathcal{F
}_{t}^{W,\widetilde{W}}}^{2}(0,T;\mathbb{R}^{n}\times \mathbb{R}^{m}\times \mathbb{R}^{m\times d}\times \mathbb{R}^{m\times d})$ to MF-FBSDE \eqref{MF}.
\end{theorem}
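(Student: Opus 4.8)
The plan is to exhibit the solution of \eqref{MF} as the unique fixed point of a solution map on $\mathcal{M}:=L^2_{\mathcal{F}_t^{W,\widetilde{W}}}(0,T;\mathbb{R}^n\times\mathbb{R}^m\times\mathbb{R}^{m\times d}\times\mathbb{R}^{m\times d})$, and to force this map to be a contraction by measuring distances in an exponentially weighted norm $\|\cdot\|_\beta$ (weight $e^{\beta t}$); this is the discounting method of \cite{Pardoux1999} as adapted in \cite{Nie2018}. Given an input $\vartheta'=(X',Y',Z',\widetilde{Z}')\in\mathcal{M}$, I would first build the mean-field data $\mathbb{E}[X'(t)]$ and freeze the forward--backward cross-coupling and the unconditional mean at the input, thereby obtaining a forward SDE for $X$, whose drift is monotone in $X$ with constant $\lambda_1$ by \textup{(A1)} and whose coefficients are Lipschitz and of linear growth by \textup{(A2)}--\textup{(A3)}, and a backward (conditional mean-field) equation for $(Y,Z,\widetilde{Z})$, whose generator is monotone in $Y$ with constant $\lambda_2$, Lipschitz in $(Z,\widetilde{Z})$ and in $\mathbb{E}[Z|\mathcal{F}_t^W],\mathbb{E}[\widetilde{Z}|\mathcal{F}_t^W]$, with terminal value $g(X(T),\mathbb{E}[X'(T)])$ controlled by \textup{(A4)}. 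Both equations are uniquely solvable by the classical theory, \textup{(A5)} ensures the output $\Phi(\vartheta'):=(X,Y,Z,\widetilde{Z})$ lies in $\mathcal{M}$, and any fixed point of $\Phi$ solves \eqref{MF}.

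The heart of the argument is the weighted contraction estimate for two inputs $\vartheta_1',\vartheta_2'$. On the forward block I would apply It\^o's formula to $e^{\beta t}|\Delta X(t)|^2$, absorbing the $\lambda_1$-term through \textup{(A1)} and bounding the remaining drift and diffusion increments through \textup{(A2)}--\textup{(A3)}: the diffusion increments produce the structural coefficients $w_1^2+\kappa_1^2$ on $\mathbb{E}|\Delta X|^2$ and $w_2^2+\kappa_2^2$ on the unconditional mean, while the small constants $\rho_2,\dots,\rho_7,w_3,\dots,w_8,\kappa_3,\dots,\kappa_8$ multiply the input increments $\Delta Y',\Delta Z',\Delta\widetilde{Z}'$ and their averages. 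Symmetrically, on the backward block I would apply It\^o to $e^{\beta t}|\Delta Y(t)|^2$, using $\lambda_2$, the terminal estimate from \textup{(A4)}, and Young's inequality to balance the generator's dependence on $(Z,\widetilde{Z})$ and on $\mathbb{E}[Z|\mathcal{F}_t^W],\mathbb{E}[\widetilde{Z}|\mathcal{F}_t^W]$ against the available $\mathbb{E}\int(|\Delta Z|^2+|\Delta\widetilde{Z}|^2)$ mass; this is exactly what produces the quadratic groupings $(\mu_4+\mu_5)^2$ and $(\mu_6+\mu_7)^2$, together with $2\mu_3$ and $2\rho_1$. Throughout, the two forms of averaging are tamed by the same elementary facts: Jensen gives $|\mathbb{E}[\Delta X']|^2\le\mathbb{E}|\Delta X'|^2$, and conditional expectation is an $L^2$-contraction, so $\mathbb{E}|\mathbb{E}[\Delta Y'|\mathcal{F}_t^W]|^2\le\mathbb{E}|\Delta Y'|^2$ and likewise for $Z,\widetilde{Z}$. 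Adding the forward and backward weighted identities and a Gronwall step then yields $\|\Phi(\vartheta_1')-\Phi(\vartheta_2')\|_\beta\le L\,\|\vartheta_1'-\vartheta_2'\|_\beta$.

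For the first assertion I would keep $\beta$ fixed and read off $L=L(T)$ whose prefactor grows with $T$ but multiplies only the small constants; hence a threshold $\theta_0=\theta_0(T,\dots)$ exists making $L<1$ whenever $\rho_j,w_\tau,\kappa_\tau\in[0,\theta_0)$, and the Banach fixed-point theorem supplies the unique solution. For the second assertion, the extra sign condition $2(\lambda_1+\lambda_2)<-2\rho_1-2\mu_3-(\mu_4+\mu_5)^2-(\mu_6+\mu_7)^2-w_1^2-w_2^2-\kappa_1^2-\kappa_2^2$ makes the weighted energy strictly dissipative: it leaves a fixed negative slack in the combined It\^o identity that dominates the time growth, so choosing $\beta$ to exploit this slack renders the prefactor of $L$ independent of $T$, and $\theta_1$ can be taken free of $T$ before invoking Banach's theorem once more.

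The step I expect to be the main obstacle is closing these two $L^2$-estimates simultaneously, because the forward diffusion depends on the backward integrands $Z,\widetilde{Z}$ (and their conditional means) while the backward generator depends on them as well, so the forward and backward energies share the same $\mathbb{E}\int(|\Delta Z|^2+|\Delta\widetilde{Z}|^2)$ budget and cannot be estimated in isolation. Keeping this shared budget strictly below $1$ is precisely what breaks the usual monotonicity/continuation scheme (as noted after \eqref{CC}) and forces the smallness of $\rho_j,w_\tau,\kappa_\tau$ in the first regime and the sharp algebraic threshold in the second; making the bookkeeping of these quadratic weights line up exactly with the constants in the statement, while simultaneously absorbing both the unconditional and the conditional mean-field perturbations via Jensen and the conditional $L^2$-contraction, is where the real care is needed.
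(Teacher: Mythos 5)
Your high-level strategy is indeed the paper's: the discounting method of \cite{Pardoux1999}, It\^o energy estimates for the forward and backward blocks in an exponentially weighted norm, Jensen's inequality plus the conditional $L^2$-contraction for the two kinds of averaging, Young's inequality producing the groupings $(\mu_4+\mu_5)^2$ and $(\mu_6+\mu_7)^2$, and the two regimes ($T$-dependent smallness, then a $T$-independent threshold under the dissipativity condition). However, the fixed-point architecture you actually set up contains a genuine flaw: you freeze the \emph{unconditional} means at the input, so your inner forward equation uses $\mathbb{E}[X'(t)]$ and your terminal datum is $g(X(T),\mathbb{E}[X'(T)])$. Two things then go wrong. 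First, the terminal term forces you to bound $\rho_9^2\,\mathbb{E}|X_1'(T)-X_2'(T)|^2$ by $\|X_1'-X_2'\|_\beta^2$, and an $L^2$-in-time norm does not control a terminal value at all. Second, and more fundamentally, the Lipschitz constants attached to the unconditional-mean arguments, namely $\rho_1$, $w_2$, $\kappa_2$, $\mu_2$, $\rho_9$, are structural constants that the theorem does \emph{not} assume small; only $\rho_2,\dots,\rho_7$, $w_3,\dots,w_8$, $\kappa_3,\dots,\kappa_8$ are. In the paper's scheme, $\mathcal{V}_1$ in \eqref{forward} solves the genuine McKean--Vlasov SDE (with its own $\mathbb{E}[X]$) and $\mathcal{V}_2$ in \eqref{backward} solves the genuine conditional mean-field BSDE with terminal datum $g(X(T),\mathbb{E}[X(T)])$, so these non-small constants always act on \emph{output} differences and are absorbed into the discount constants (this is exactly why $\bar{\lambda}_1$ in Proposition \ref{X} contains $-2\rho_1-w_1^2-w_2^2-\kappa_1^2-\kappa_2^2$ and $\bar{\lambda}_2$ in Proposition \ref{Y} contains $-2\mu_3$). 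In your frozen-mean scheme they instead multiply the raw input difference.

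Concretely, the estimate that kills your contraction is the pointwise one at $t=T$, which cannot be avoided since $g$ depends on $X(T)$: with the mean frozen one only gets
\begin{equation*}
e^{-\beta T}\mathbb{E}|\Delta X(T)|^{2}\;\leq\;(w_2^{2}+\kappa_2^{2}+\cdots)\,\|\Delta X'\|_{\beta}^{2}+(\text{small constants})\cdot(\cdots),
\end{equation*}
and, unlike the time-integrated bound, the coefficient $w_2^{2}+\kappa_2^{2}$ here is not divided by $\bar{\lambda}_1$, so no choice of the weight $\beta$ shrinks it (pushing $\beta$ to help the forward block makes the backward prefactor of Remark \ref{remarkA2} blow up like $e^{|\bar{\lambda}_2|T}$, which beats any polynomial gain). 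Feeding this into the backward estimate, the contraction constant acquires a contribution of order $(\rho_8^{2}+\rho_9^{2})(w_2^{2}+\kappa_2^{2})$ built entirely from non-small constants; whenever, say, $\rho_8 w_2\geq 1$, your map is not a contraction, under either of the theorem's two hypotheses. The repair is precisely the paper's architecture: iterate only on $(Y,Z,\widetilde{Z})$, never freeze $\mathbb{E}[X]$ or $\mathbb{E}[X(T)]$ (nor the conditional means of $(Y,Z,\widetilde{Z})$ in the backward step), and compose $\mathcal{V}_2\circ\mathcal{V}_1$ so that every non-small constant either is absorbed into $\bar{\lambda}_1,\bar{\lambda}_2$ or multiplies a quantity already controlled by the small constants.
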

By applying Theorem \ref{wellposeness}, we obtain the following well-posedness result for MF-FBSDE \eqref{CC}.
\begin{theorem}\label{Hwellposedness}
Let $\lambda ^{\ast }$ be the maximum eigenvalue of matrix $\frac{A+A^{\top}}{2}$, assume that $4\lambda ^{\ast }<-2|F|-6|C|^{2}-6|\widetilde{C}|^{2}-5|H|^{2}-5|\widetilde{H}|^{2}$, there exists a constant $\theta_1>0$ independent of $T$, which may depend on $\lambda^{\ast}$, $|C|$, $|\widetilde{C}|$, $|F|$, $|H|$, $|\widetilde{H}|$, $|Q|$, $|G|$, when $|B|$, $|D|$, $|\widetilde{D}|$ and $|R^{-1}|\in[0,\theta_1)$, then there exists a unique adapted solution $(\bar{z}_i(\cdot),\bar{p}_i(\cdot),\bar{k}_i(\cdot),\bar{\widetilde{k}}_i(\cdot))\in L_{\mathcal{F
}_{t}^{W,\widetilde{W}}}^{2}(0,T;\mathbb{R}^{n}\times \mathbb{R}^{m}\times \mathbb{R}^{m\times d}\times \mathbb{R}^{m\times d})$ to MF-FBSDE \eqref{CC}.
\end{theorem}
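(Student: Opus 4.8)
The plan is to recognize the Hamiltonian CC system \eqref{CC} as a special case of the general MF-FBSDE \eqref{MF} and then invoke the $T$-independent part of Theorem \ref{wellposeness}. Fixing the agent index $i$, I would set $W=W_i$, $\widetilde{W}=\widetilde{W}_i$ (so that $\mathcal{G}^i_t$ plays the role of $\mathcal{F}^W_t$ and $\mathcal{F}^i_t$ that of $\mathcal{F}^{W,\widetilde{W}}_t$), and use the dictionary $X=\bar{z}_i$, $Y=\bar{p}_i$, $Z=\bar{k}_i$, $\widetilde{Z}=\bar{\widetilde{k}}_i$. Writing the projected feedback as $\Pi(\beta,\gamma,\tilde{\gamma}):=\mathbf{P}_{\Gamma}[R^{-1}(B^{\top}\beta+D^{\top}\gamma+\widetilde{D}^{\top}\tilde{\gamma})]$, the generic arguments $(x,\alpha,y,\beta,z,\gamma,\tilde{z},\tilde{\gamma})$ of \eqref{MF} correspond to $(\bar{z}_i,\mathbb{E}[\bar{z}_i],\bar{p}_i,\mathbb{E}[\bar{p}_i|\mathcal{G}^i_t],\bar{k}_i,\mathbb{E}[\bar{k}_i|\mathcal{G}^i_t],\bar{\widetilde{k}}_i,\mathbb{E}[\bar{\widetilde{k}}_i|\mathcal{G}^i_t])$, and the data read off as $b=Ax+B\Pi+F\alpha+b$, $\sigma=Cx+D\Pi+H\alpha+\sigma$, $\widetilde{\sigma}=\widetilde{C}x+\widetilde{D}\Pi+\widetilde{H}\alpha+\widetilde{\sigma}$, $f=A^{\top}y+C^{\top}z+\widetilde{C}^{\top}\tilde{z}-Q(x-\alpha)$ and $g(x,\alpha)=-G(x-\alpha)$.

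Next I would verify (A1)--(A5) and record the associated constants. The monotonicity assumption (A1) is the only place the drift and generator are tested against $\Delta x$ and $\Delta y$; since $\langle A\xi,\xi\rangle\le\lambda^{\ast}|\xi|^{2}$ for the maximal eigenvalue $\lambda^{\ast}$ of $(A+A^{\top})/2$, both $\lambda_1$ and $\lambda_2$ may be taken equal to $\lambda^{\ast}$, which is exactly why $2(\lambda_1+\lambda_2)$ becomes $4\lambda^{\ast}$. For the Lipschitz data the key structural facts are that $b,\sigma,\widetilde{\sigma}$ depend on $(\beta,\gamma,\tilde{\gamma})$ only through $\Pi$, and that $\mathbf{P}_{\Gamma}$ is nonexpansive in the $R$-weighted norm $\|\cdot\|_{R}$; by norm equivalence this makes $\Pi$ Lipschitz in each of $\beta,\gamma,\tilde{\gamma}$ with constant bounded by $c|R^{-1}|$ times $|B|,|D|,|\widetilde{D}|$ respectively. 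Reading off constants gives $\rho_1=|F|$, $\mu_1=\mu_2=|Q|$, $\mu_4=|C|$, $\mu_6=|\widetilde{C}|$, $\rho_8=\rho_9=\sqrt{2}\,|G|$, while $\rho_2=\rho_4=\rho_6=0$, $\mu_3=\mu_5=\mu_7=0$ and $w_3=w_5=w_7=\kappa_3=\kappa_5=\kappa_7=0$. Crucially, every surviving coefficient attached to $\beta,\gamma,\tilde{\gamma}$---namely $\rho_3,\rho_5,\rho_7$, $w_4,w_6,w_8$, $\kappa_4,\kappa_6,\kappa_8$---carries a factor from $\Pi$ and is therefore controlled by products of $|R^{-1}|$ with the norms $|B|,|D|,|\widetilde{D}|$. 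Assumption (A5) is immediate since $\Pi(0,0,0)=\mathbf{P}_{\Gamma}[0]$ is a fixed vector and all frozen coefficients are bounded by (H1)--(H2).

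To apply the $T$-independent conclusion of Theorem \ref{wellposeness} I would then check its two requirements. First, all constants required to lie in $[0,\theta_1)$---the $\rho_j$ with $j=2,\dots,7$ and the $w_{\tau},\kappa_{\tau}$ with $\tau=3,\dots,8$---are, by the previous paragraph, either identically zero or bounded by a multiple of $|B|,|D|,|\widetilde{D}|,|R^{-1}|$; hence they fall below $\theta_1$ once $|B|,|D|,|\widetilde{D}|,|R^{-1}|\in[0,\theta_1)$ for a suitably small $\theta_1$. Second, for the diffusion coefficients I would bound $|\sigma_1-\sigma_2|^{2}$ and $|\widetilde{\sigma}_1-\widetilde{\sigma}_2|^{2}$ by splitting each difference into its five elementary summands ($C\Delta x$, $H\Delta\alpha$, and the three pieces arising from $D$ applied to the projection difference) and applying $|\sum_{k=1}^{5}a_k|^{2}\le 5\sum_k|a_k|^{2}$; this yields $w_1^{2}=5|C|^{2}$, $w_2^{2}=5|H|^{2}$, $\kappa_1^{2}=5|\widetilde{C}|^{2}$, $\kappa_2^{2}=5|\widetilde{H}|^{2}$. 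Substituting into the monotonicity inequality $2(\lambda_1+\lambda_2)<-2\rho_1-2\mu_3-(\mu_4+\mu_5)^{2}-(\mu_6+\mu_7)^{2}-w_1^{2}-w_2^{2}-\kappa_1^{2}-\kappa_2^{2}$ gives $4\lambda^{\ast}<-2|F|-|C|^{2}-|\widetilde{C}|^{2}-5|C|^{2}-5|H|^{2}-5|\widetilde{C}|^{2}-5|\widetilde{H}|^{2}$, i.e. $4\lambda^{\ast}<-2|F|-6|C|^{2}-6|\widetilde{C}|^{2}-5|H|^{2}-5|\widetilde{H}|^{2}$, which is precisely the stated hypothesis. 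Theorem \ref{wellposeness} then delivers the unique adapted solution, with $\theta_1$ depending only on $\lambda^{\ast},|C|,|\widetilde{C}|,|F|,|H|,|\widetilde{H}|,|Q|,|G|$ and not on $T$.

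The main obstacle, and the part deserving the most care, is the constant bookkeeping around the projection operator: one must confirm that $\mathbf{P}_{\Gamma}$ is genuinely nonexpansive in $\|\cdot\|_{R}$ (so that it is Lipschitz in the Euclidean norm with a controllable constant), that this Lipschitz factor lands only on the $\beta,\gamma,\tilde{\gamma}$ slots so those constants are indeed small, and that the specific elementary inequalities used to split the diffusion differences produce exactly the coefficients $6,6,5,5$ rather than spurious cross-terms. A secondary point to verify is the filtration matching---that conditioning on $\mathcal{G}^i_t$ (generated by $W_i$ alone) correctly instantiates the $\mathbb{E}[\cdot\,|\mathcal{F}^W_t]$ slots of \eqref{MF}---which is what makes the partial-information CC system a genuine instance of the general equation rather than merely a formal one.
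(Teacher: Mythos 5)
Your proposal is correct and follows exactly the paper's own route: the paper likewise proves Theorem \ref{Hwellposedness} by casting \eqref{CC} as a special case of \eqref{MF}, reading off the very same constants ($\lambda_1=\lambda_2=\lambda^{\ast}$, $\rho_1=|F|$, $\mu_1=\mu_2=|Q|$, $\mu_4=|C|$, $\mu_6=|\widetilde{C}|$, $\rho_8^2=\rho_9^2=2|G|^2$, $w_1^2=5|C|^2$, $w_2^2=5|H|^2$, $\kappa_1^2=5|\widetilde{C}|^2$, $\kappa_2^2=5|\widetilde{H}|^2$, with the projection-dependent constants all carrying factors of $|R^{-1}|$, $|B|$, $|D|$, $|\widetilde{D}|$), and invoking the $T$-independent part of Theorem \ref{wellposeness}. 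Your write-up is in fact more detailed than the paper's one-paragraph proof, spelling out the nonexpansiveness of $\mathbf{P}_{\Gamma}$, the five-term splitting behind the coefficients $6,6,5,5$, and the filtration matching that the paper leaves implicit.
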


\begin{proof}
Noting that MF-FBSDE \eqref{CC} is a special case of \eqref{MF}. Indeed, by choosing
\begin{equation*}
\begin{aligned}
&\lambda _{1} =\lambda _{2}=\lambda ^{\ast },\rho =|A|,\rho
_{1}=|F|,\rho _{2}=\rho _{4}=\rho_6=0,\rho_3=|B|^2|R^{-1}|,\\
&\rho_5=|B||R^{-1}||D|,\rho_7=|B||R^{-1}||\widetilde{D}|,\rho_8^2=\rho_9^2=2|G|^2,\mu_1=\mu_2=|Q|, \\
&\mu_3=\mu_5=\mu_7=0,\mu_4=|C|,\mu_6=|\widetilde{C}|,
w_1^2=5|C|^2,w_2^2=5|H|^2,\\
&w_3=w_5=w_7=0,w_4^2=5|D|^2|R^{-1}|^2|B|^2,w_6^2=5|D|^2|R^{-1}|^2|D|^2,\\
&w_8^2=5|D|^2|R^{-1}|^2|\widetilde{D}|^2,\kappa_1^2=5|\widetilde{C}|^2,
\kappa_2^2=5|\widetilde{H}|^2,\kappa_3=\kappa_5=\kappa_7=0,\\
&\kappa_4^2=5|\widetilde{D}|^2|R^{-1}|^2|B|^2,\kappa_6^2=5|\widetilde{D}|^2|R^{-1}|^2|D|^2,\kappa_8^2=5|\widetilde{D}|^2|R^{-1}|^2|\widetilde{D}|^2,
\end{aligned}
\end{equation*}
thus assumptions \textup(A1)-\textup(A5) are satisfied naturally. By applying Theorem \ref{wellposeness}, the well-posedness of \eqref{CC} holds.   \hfill$\square$
\end{proof}
As yet, we have discussed Problem (LLP) which is the related limiting problem of Problem (LP), and we obtain the candidate decentralized strategy profile $\bar{u}(\cdot)=(\bar{u}_1(\cdot),\ldots,\bar{u}_N(\cdot))$, where $\bar{u}_i(\cdot)$ is given by \eqref{gcontrol} and $(\bar{z}_i(\cdot),\bar{p}_i(\cdot),\bar{k}_i(\cdot),\bar{\tilde{k}}_i(\cdot))$ solves \eqref{CC}. From Theorem \ref{Hwellposedness}, we know that $\bar{u}(\cdot)$ is well defined. Next, we will verify that $\bar{u}(\cdot)$ is indeed an $\varepsilon$-Nash equilibrium of Problem (LP). To do this, we suppose that $\bar{x}_i(\cdot)$ is the centralized state w.r.t. $\bar{u}_i(\cdot)$, and $\bar{z}_i(\cdot)$ is the corresponding decentralized state. Then, we have the following estimates for state (see Lemma \ref{averageerror}) and cost functional (see Lemma \ref{errorcost1}). In what follows, $K$ is a constant independent of $N$ and $i$ ($1\leq i\leq N$), which may vary line by line.
\begin{lemma}\label{averageerror}
Let \textup{(H1)} and \textup{(H2)} hold,  it follows that $($recall $\bar{x}^{(N)}(t)=\frac{1}{N}\overset{N}{\underset{i=1}{\sum }}\bar{x}_{i}(t)$$)$,
\begin{equation}\label{statee}
\mathbb{E}\underset{0\leq t\leq T}{\sup }|\bar{x}^{(N)}(t)-l(t)|^{2}+\underset{1\leq i\leq N}{\sup }\mathbb{E}\underset{0\leq t\leq T}{\sup }|%
\bar{x}_{i}(t)-\bar{z}_{i}(t)|^{2}=O(\frac{1}{N}).
\end{equation}
\end{lemma}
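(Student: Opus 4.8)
The plan is to exploit the symmetry built into the consistency system \eqref{CC}. Since each $\bar u_i(\cdot)$ is $\mathcal{G}^i_t$-measurable and every copy of \eqref{CC} is driven by its own pair $(W_i,\widetilde W_i)$ with identical coefficients and the \emph{same} deterministic limit $l(\cdot)=\mathbb{E}[\bar z_i(\cdot)]$, the quadruples $(\bar z_i,\bar p_i,\bar k_i,\bar{\widetilde k}_i)$, and hence the strategies $\bar u_i$ produced by \eqref{gcontrol}, are independent and identically distributed across $i$. Standard SDE moment estimates under (H1)--(H2), together with the $L^2$-bound on $\bar u_i$ coming from Theorem \ref{Hwellposedness}, then furnish a constant $K$, independent of $i$ and $N$, with $\mathbb{E}\sup_{0\le t\le T}|\bar z_i(t)|^2+\mathbb{E}\int_0^T|\bar u_i(t)|^2dt\le K$. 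This uniformity is what will ultimately make the $O(1/N)$ rate uniform in $i$.

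\textbf{Step 1 (law of large numbers for the decentralized average).} Set $\bar z^{(N)}=\frac1N\sum_i\bar z_i$. Taking expectations in \eqref{lstate} shows $l$ solves the ODE $dl=[(A+F)l+B\mathbb{E}[\bar u_i]+b]dt$, and subtracting it from the averaged dynamics of $\bar z^{(N)}$ gives a linear SDE for $\bar z^{(N)}-l$ with drift $A(\bar z^{(N)}-l)+B\cdot\frac1N\sum_i(\bar u_i-\mathbb{E}[\bar u_i])$ and martingale part $\frac1N\sum_i\int(\cdots)\,dW_i+\frac1N\sum_i\int(\cdots)\,d\widetilde W_i$. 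Because the $W_i,\widetilde W_i$ are mutually independent, these stochastic integrals are orthogonal, so after BDG their bracket carries the factor $N^{-2}\sum_i(\cdots)=O(N^{-1})$; the term $\frac1N\sum_i(\bar u_i-\mathbb{E}[\bar u_i])$ is a normalized sum of i.i.d.\ centred variables and contributes $O(N^{-1})$ in $L^2$. Gronwall's inequality then yields $\mathbb{E}\sup_{0\le t\le T}|\bar z^{(N)}(t)-l(t)|^2=O(1/N)$.

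\textbf{Step 2 (propagation to the true state).} Because the same control $\bar u_i$ enters \eqref{state} and \eqref{lstate}, the control terms $B\bar u_i,D\bar u_i,\widetilde D\bar u_i$ and the inhomogeneities $b,\sigma,\widetilde\sigma$ cancel, so $\Delta_i:=\bar x_i-\bar z_i$ solves the driven linear SDE $d\Delta_i=[A\Delta_i+F\Phi]\,dt+[C\Delta_i+H\Phi]\,dW_i+[\widetilde C\Delta_i+\widetilde H\Phi]\,d\widetilde W_i$, $\Delta_i(0)=0$, with $\Phi:=\bar x^{(N)}-l$. A routine It\^o/BDG/Gronwall estimate gives $\mathbb{E}\sup_{0\le t\le T}|\Delta_i|^2\le KT\,\mathbb{E}\sup_{0\le t\le T}|\Phi|^2$ uniformly in $i$. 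To close an estimate for $\Phi$, write $\Phi=\Psi+(\bar z^{(N)}-l)$ with $\Psi:=\frac1N\sum_i\Delta_i$ and apply It\^o to $|\Psi|^2$; the diffusion of $\Psi$ against each $W_i$ carries the factor $1/N$, so both the It\^o correction and the BDG bound for the martingale reduce to $N^{-2}\sum_i\mathbb{E}\int_0^T(|\Delta_i|^2+|\Phi|^2)\,ds$, which by the previous bound and Step 1 is at most $O(1/N)\,\mathbb{E}\sup_t|\Phi|^2+O(1/N)$. Using $\mathbb{E}\sup_t|\Phi|^2\le 2\mathbb{E}\sup_t|\Psi|^2+2\mathbb{E}\sup_t|\bar z^{(N)}-l|^2$ and choosing $N$ large so that the coefficient of $\mathbb{E}\sup_t|\Psi|^2$ on the right is below $1$, I absorb that term and invoke Gronwall to obtain $\mathbb{E}\sup_t|\Psi|^2=O(1/N)$, hence $\mathbb{E}\sup_t|\Phi|^2=O(1/N)$. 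Feeding this back into the bound for $\Delta_i$ yields $\sup_{1\le i\le N}\mathbb{E}\sup_t|\bar x_i-\bar z_i|^2=O(1/N)$, which together with the estimate for $\Phi$ is exactly \eqref{statee}.

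\textbf{Main obstacle.} The delicate point is the circular coupling in Step 2: each per-agent error $\Delta_i$ is controlled by the averaged error $\Phi$, while $\Phi$ is (up to the Step-1 remainder) itself the average of the $\Delta_i$. Breaking this loop requires that the martingale contributions to $\Psi$ be genuinely of order $1/N$ — which rests on the mutual independence of $\{W_i,\widetilde W_i\}$ giving orthogonality of the stochastic integrals and the $N^{-2}\sum_i$ bracket — and on absorbing the residual $O(1/N)\,\mathbb{E}\sup_t|\Psi|^2$ term for $N$ large before applying Gronwall. Maintaining constants independent of $i$ and $N$ throughout, which the i.i.d.\ structure of the decentralized system guarantees, is what makes the resulting $O(1/N)$ rate uniform.
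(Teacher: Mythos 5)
Your proof is correct, but it closes the key estimate by a different mechanism than the paper. The paper works directly with $\bar x^{(N)}-l$: it writes its SDE (whose martingale part is $\frac1N\sum_i[C\bar x_i+D\bar u_i+H\bar x^{(N)}+\sigma]\,dW_i$ plus the analogous $d\widetilde W_i$ terms), uses the i.i.d.\ property of the $\bar u_i$ for the drift exactly as you do, and then controls the brackets by first establishing the uniform a priori bound $\mathbb{E}\sup_{0\le t\le T}|\bar x_i(t)|^2\le K$ (via Gronwall applied to $\sum_i|\bar x_i|^2$, which gives $O(N)$, plus exchangeability of the agents) together with $\sup_{0\le t\le T}|l(t)|^2\le K$; the brackets are then at most $\frac{K}{N}\big(1+\mathbb{E}\int_0^T|\bar x^{(N)}-l|^2dt\big)$ and a single Gronwall yields the first claim, after which $\bar x_i-\bar z_i$ is handled by standard SDE estimates exactly as in your Step 2. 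You instead insert the decentralized average, writing $\bar x^{(N)}-l=\big(\bar x^{(N)}-\bar z^{(N)}\big)+\big(\bar z^{(N)}-l\big)$, dispose of the second piece by a clean law of large numbers for the i.i.d.\ copies of \eqref{CC}, and close the first piece through the self-referential system $\mathbb{E}\sup_t|\Delta_i|^2\le K\,\mathbb{E}\sup_t|\Phi|^2$, $\Psi=\frac1N\sum_i\Delta_i$, broken by the $1/N$ gain in the brackets and a large-$N$ absorption. What each buys: the paper's route has no circularity, at the cost of proving the uniform-in-$(i,N)$ moment bound for the coupled states; your route needs only the i.i.d.\ bounds on $(\bar z_i,\bar u_i)$ and mere finiteness (not uniformity in $N$) of the true states' second moments, at the cost of the fixed-point-style closure. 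Both deliver the same $O(1/N)$ rate, uniform in $i$.

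Two points you should tighten. First, the absorption step presupposes $M:=\mathbb{E}\sup_{0\le t\le T}|\Psi(t)|^2<\infty$; you record bounds for $\bar z_i$ and $\bar u_i$ but never say why $\mathbb{E}\sup_{0\le t\le T}|\bar x_i(t)|^2<\infty$ — state that this follows, for each fixed $N$, from standard estimates for the coupled linear system \eqref{state} with bounded coefficients and square-integrable controls. Second, your residual term is $\frac{K}{N}\,\mathbb{E}\sup_{0\le t\le T}|\Psi(t)|^2$, which is global in time, whereas the Gronwall term is local in time, so you cannot literally ``absorb and then invoke Gronwall.'' Instead treat $\frac{K}{N}M+O(1/N)$ as a constant, apply Gronwall to get $\mathbb{E}\sup_{s\le t}|\Psi(s)|^2\le\big(\frac{K}{N}M+O(1/N)\big)e^{Kt}$, evaluate at $t=T$, and solve $\big(1-\frac{K}{N}e^{KT}\big)M\le O(1/N)\,e^{KT}$ for $N$ large. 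This is a reordering of your words rather than a repair, and with it the conclusion \eqref{statee} follows as you claim.
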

\begin{proof}
From \eqref{lstate} and \eqref{limit}, we have
\begin{equation}
\label{lequation}
dl(t)=\{(A(t)+F(t))l(t)+B(t)\mathbb{E}[\bar{u}_i(t)]+b(t)\}dt, \qquad
l(0)=x,
\end{equation}
and by recalling \eqref{state}, it holds that
\begin{equation*}
\left\{
\begin{aligned}
d(\bar{x}^{(N)}(t)-l(t))
&=~\big\{(A(t)+F(t))(\bar{x}^{(N)}(t)-l(t))+B(t)(\frac{1%
}{N}\overset{N}{\underset{i=1}{\sum }}\bar{u}_{i}(t)-\mathbb{E}[\bar{u}_{i}(t)])\big\}dt \\
&+\frac{1}{N}\overset{N}{\underset{i=1}{\sum }}[C(t)\bar{x}_{i}(t)+D(t)\bar{%
u}_{i}(t)+H(t)\bar{x}^{(N)}(t)+\sigma (t)]dW_{i}(t) \\
&+\frac{1}{N}\overset{N}{\underset{i=1}{\sum }}[\widetilde{C}(t)\bar{x}_{i}(t)+\widetilde{D}(t)\bar{%
u}_{i}(t)+\widetilde{H}(t)\bar{x}^{(N)}(t)+\widetilde{\sigma }(t)]d\widetilde{W}_{i}(t), \\
\bar{x}^{(N)}(0)-l(0)=&~0.
\end{aligned}%
\right.
\end{equation*}%
Since $\bar{u}_i(\cdot)$ and $\bar{u}_j(\cdot)$ are i.i.d (note $(\bar{p}_i(\cdot),\bar{k}_i(\cdot),\bar{\widetilde{k}}_i(\cdot))$ and $(\bar{p}_j(\cdot),\bar{k}_j(\cdot),\bar{\widetilde{k}}_j(\cdot))$ are i.i.d, for $i\neq j$), we have
\begin{equation}\label{firstestimate}
\mathbb{E}\int_{0}^{T}|\frac{1}{N}\overset{N}{\underset{i=1}{\sum }}\bar{u}%
_{i}(t)-\mathbb{E}[\bar{u}_i(t)]|^{2}dt=\frac{1}{N^{2}}\overset{N}{\underset{i=1}{\sum }}\mathbb{E}\int_{0}^{T}|\bar{u}%
_{i}(t)-\mathbb{E}[\bar{u}_i(t)]|^{2}dt 
\leq \frac{K}{N}=O(\frac{1}{N}).
\end{equation}%
Then by Burkholder-Davis-Gundy (BDG) inequality, it follows that
\begin{equation}
\begin{aligned}
\mathbb{E}\underset{0\leq t\leq T}{\sup }|\bar{x}^{(N)}(t)-l(t)|^{2}
&\leq K\mathbb{E}\int_{0}^{T}[|\bar{x}^{(N)}(t)-l(t)|^{2}+|\frac{1}{N}\overset{N%
}{\underset{i=1}{\sum }}\bar{u}_{i}(t)-\mathbb{E}[\bar{u}_i(t)]|^{2}]dt \\
&+\frac{K}{N^{2}}\mathbb{E}\overset{N}{\underset{i=1}{\sum }}%
\int_{0}^{T}|C(t)\bar{x}_{i}(t)+D(t)\bar{u}_{i}(t)+H(t)(\bar{x}%
^{(N)}(t)-l(t))+H(t)l(t)+\sigma (t)|^{2}dt \\
&+\frac{K}{N^{2}}\mathbb{E}\overset{N}{\underset{i=1}{\sum }}\int_{0}^{T}|\widetilde{C}(t)\bar{x}_{i}(t)+\widetilde{D}(t)\bar{%
u}_{i}(t)+\widetilde{H}(t)(\bar{x}^{(N)}(t)-l(t))+\widetilde{H}(t)l(t)+\widetilde{%
\sigma }(t)|^{2}dt.
\end{aligned}
\end{equation}
From  \eqref{state}, by applying Gronwall's inequality, one can prove $\mathbb{E}\underset{0\leq t\leq T}{\sup }\overset{N}{\underset{i=1}{\sum }} |\bar{x}_{i}(t)|^2=O(N) $, and thus $\mathbb{E}\underset{0\leq t\leq T}{\sup } |\bar{x}_{i}(t)|^2\leq K $. We can also show $\mathbb{E}\underset{0\leq t \leq T}{\sup}|l(t)|^2\leq K$ by noticing \eqref{lequation}.  Then,  we have
\begin{equation}\label{secondest}
\begin{aligned}
&\frac{K}{N^{2}}\mathbb{E}\overset{N}{\underset{i=1}{\sum }}%
\int_{0}^{T}|C(t)\bar{x}_{i}(t)+D(t)\bar{u}_{i}(t)+H(t)(\bar{x}%
^{(N)}(t)-l(t))+H(t)l(t)+\sigma (t)|^{2}dt \\
&+\frac{K}{N^{2}}\mathbb{E}\overset{N}{\underset{i=1}{\sum }}\int_{0}^{T}|\widetilde{C}(t)\bar{x}_{i}(t)+\widetilde{D}(t)\bar{%
u}_{i}(t)+\widetilde{H}(t)(\bar{x}^{(N)}(t)-l(t))+\widetilde{H}(t)l(t)+\widetilde{%
\sigma }(t)|^{2}dt\\
\leq &\frac{K}{N}\Big(1+\mathbb{E}\int_{0}^{T}|\bar{x}^{(N)}(t)-l(t)|^{2}dt\Big).
\end{aligned}
\end{equation}%
Combining \eqref{firstestimate}-\eqref{secondest}, 
and applying Gronwall's inequality, we have
$
\mathbb{E}\underset{0\leq t\leq T}{\sup }|\bar{x}^{(N)}(t)-l(t)|^{2}=O(\frac{1}{N}).
$
Moreover, by recalling \eqref{state} and \eqref{lstate}, from standard estimates of SDE, we can obtain
$\underset{1\leq i\leq N}{\sup }\mathbb{E}\underset{0\leq t\leq T}{\sup }|%
\bar{x}_{i}(t)-\bar{z}_{i}(t)|^{2}=O(\frac{1}{N})$. \hfill$\square$
\end{proof}

\begin{lemma}\label{errorcost1}
Let \textup{(H1)} and \textup{(H2)} hold, then we have
$
|\mathcal{J}_{i}(\bar{u}_{i}(\cdot ),\bar{u}_{-i}(\cdot ))-J_{i}(\bar{u}%
_{i}(\cdot ))|=O(\frac{1}{\sqrt{N}})\label{costerror}
$, for $1\leq i \leq N$.
\end{lemma}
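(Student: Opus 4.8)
The plan is to subtract the two functionals directly. Since both $\mathcal{J}_i(\bar u_i(\cdot),\bar u_{-i}(\cdot))$ and $J_i(\bar u_i(\cdot))$ are evaluated at the \emph{same} control $\bar u_i(\cdot)$, the running control cost $\frac12\mathbb{E}\int_0^T\langle R\bar u_i,\bar u_i\rangle\,dt$ is identical in both and cancels in the difference. Writing $a:=\bar x_i-\bar x^{(N)}$ and $b:=\bar z_i-l$, what remains is the difference of the state-dependent terms, and I would exploit the symmetry of $Q$ and $G$ (guaranteed by (H2)) through the elementary identity $\langle Qa,a\rangle-\langle Qb,b\rangle=\langle Q(a-b),a+b\rangle$, and likewise for $G$. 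Hence
\begin{equation*}
\mathcal{J}_i(\bar u_i,\bar u_{-i})-J_i(\bar u_i)=\frac12\mathbb{E}\int_0^T\langle Q(a-b),a+b\rangle\,dt+\frac12\mathbb{E}\langle G(a(T)-b(T)),a(T)+b(T)\rangle.
\end{equation*}

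Next I would estimate each term by Cauchy--Schwarz, reducing the problem to controlling the $L^2$-norms of $a-b$ and $a+b$ separately. For the ``difference'' factor, note that $a-b=(\bar x_i-\bar z_i)-(\bar x^{(N)}-l)$, so both pieces are exactly those controlled in Lemma~\ref{averageerror}; thus $\mathbb{E}\int_0^T|a-b|^2\,dt$ and $\mathbb{E}|a(T)-b(T)|^2$ are $O(\frac1N)$, i.e.\ the $L^2$-norm of the difference is $O(\frac1{\sqrt N})$. For the ``sum'' factor, the a priori bounds already established in the proof of Lemma~\ref{averageerror} (namely $\mathbb{E}\sup_{0\le t\le T}|\bar x_i(t)|^2\le K$ and $\mathbb{E}\sup_{0\le t\le T}|l(t)|^2\le K$, together with the analogous bound $\mathbb{E}\sup_{0\le t\le T}|\bar z_i(t)|^2\le K$ from standard SDE estimates on \eqref{lstate} and the bound on $\bar x^{(N)}(\cdot)$) give $\mathbb{E}\int_0^T|a+b|^2\,dt\le K$ and $\mathbb{E}|a(T)+b(T)|^2\le K$, both $O(1)$ and uniform in $i$ and $N$. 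Multiplying the two factors via Cauchy--Schwarz and using the boundedness of $Q$ and $G$, each term becomes a product of an $O(\frac1{\sqrt N})$ quantity and an $O(1)$ quantity, hence $O(\frac1{\sqrt N})$; adding the running and terminal contributions yields the claim, uniformly in $1\le i\le N$.

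The main point to watch --- and the reason the rate is $\frac1{\sqrt N}$ rather than $\frac1N$ --- is precisely this Cauchy--Schwarz pairing of the small difference against the merely bounded sum: one cannot expect cancellation to improve the order here. The only genuine technical care needed is to verify the uniform (in $i$ and $N$) $O(1)$ bounds on the ``sum'' factor, and these follow from the same Gronwall-type arguments used in Lemma~\ref{averageerror}, requiring only (H1)--(H2) and the square-integrability of $\bar u_i(\cdot)$ guaranteed by $\bar u_i(\cdot)\in\mathcal{U}_{ad}^{d,i}$.
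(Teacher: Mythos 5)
Your proposal is correct and takes essentially the same route as the paper: the paper's proof also cancels the control-cost term, applies the (equivalent) symmetric identity $\langle Qa,a\rangle-\langle Qb,b\rangle=\langle Q(a-b),a-b\rangle+2\langle Q(a-b),b\rangle$, and then combines Cauchy--Schwarz with Lemma \ref{averageerror} and the uniform second-moment bounds $\mathbb{E}\sup_{0\le t\le T}(|\bar{z}_i(t)|^2+|l(t)|^2)\le K$ to get the $O(\frac{1}{\sqrt{N}})$ rate. Your variant $\langle Q(a-b),a+b\rangle$ merely repackages the paper's ``quadratic-in-difference plus cross term'' decomposition into a single cross term, at the (harmless) cost of also needing the uniform bounds on $\bar{x}_i$ and $\bar{x}^{(N)}$, which you correctly note are available from the proof of Lemma \ref{averageerror}.
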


\begin{proof}
According to \eqref{cost} and \eqref{lcost}, we have
\begin{equation*}
\begin{aligned}
\mathcal{J}_{i}(\bar{u}_{i}(\cdot ),\bar{u}_{-i}(\cdot ))-J_{i}(\bar{u}%
_{i}(\cdot ))
 =&\frac{1}{2}\mathbb{E\{}\int_{0}^{T}[\langle Q(t)(\bar{x}%
_{i}(t)-\bar{x}^{(N)}(t)),\bar{x}_{i}(t)-\bar{x}^{(N)}(t)\rangle-\langle Q(t)(\bar{z}_{i}(t)-l(t)),\bar{z}_{i}(t)-l(t)\rangle ]dt\\
&\qquad+\langle G(\bar{x}_{i}(T)-\bar{x}^{(N)}(T)),\bar{x}_{i}(T)-\bar{x}%
^{(N)}(T)\rangle-\langle G(\bar{z}_{i}(T)-l(T)),\bar{z}_{i}(T)-l(T)\rangle\}.
\end{aligned}
\end{equation*}%
By noticing that $\langle Qa,a\rangle-\langle Qb,b\rangle=\langle Q(a-b),a-b\rangle+2\langle Q(a-b),b\rangle$, we obtain
\begin{equation*}
\begin{aligned}
&\mathbb{E}\int_{0}^{T}[\langle Q(t)(\bar{x}_{i}(t)-\bar{x}^{(N)}(t)),%
\bar{x}_{i}(t)-\bar{x}^{(N)}(t)\rangle -\langle Q(t)(\bar{z}%
_{i}(t)-l(t)),\bar{z}_{i}(t)-l(t)\rangle ]dt \\
\leq &K\int_{0}^{T}\mathbb{E}|\bar{x}_{i}(t)-\bar{z}_{i}(t)|^{2}dt+K%
\int_{0}^{T}\mathbb{E}|\bar{x}^{(N)}(t)-l(t)|^{2}dt\\&+K\int_{0}^{T}(\mathbb{E}|%
\bar{x}_{i}(t)-\bar{x}^{(N)}(t)-(\bar{z}_{i}(t)-l(t))|^{2})^{\frac{1}{2}%
}(\mathbb{E}|\bar{z}_{i}(t)-l(t)|^{2})^{\frac{1}{2}}dt \\
\leq &K\int_{0}^{T}\mathbb{E}|\bar{x}_{i}(t)-\bar{z}_{i}(t)|^{2}dt+K%
\int_{0}^{T}\mathbb{E}|\bar{x}^{(N)}(t)-l(t)|^{2}dt\\
&+K\int_{0}^{T}(\mathbb{E}|%
\bar{x}_{i}(t)-\bar{z}_{i}(t)|^{2}+\mathbb{E}|\bar{x}^{(N)}(t)-l(t)|^{2})^{%
\frac{1}{2}}dt=O(\frac{1}{\sqrt{N}}),
\end{aligned}
\end{equation*}%
where the last equality is due to Lemma \ref{averageerror} and $\mathbb{E}\underset{0\leq t\leq T}{\sup}(|\bar{z}_i(t)|^2+|l(t)|^2)\leq K$.
Similarly, we can prove that the difference of terminal term is also order of $\frac{1}{\sqrt{N}}$. The proof is complete.   \hfill$\square$

\end{proof}

Now, we consider the perturbation to $i$-th agent, i.e. the agent $\mathcal{A}_i$ choose an alternative control $u_i(\cdot)$, while other agents $\mathcal{A}_j$, for $j\neq i$, still take the decentralized strategy $\bar{u}_j(\cdot)$. Then the perturbed centralized state of $\mathcal{A}_i$ is given by
\begin{equation}\label{perturbed centralized state}
\left\{
\begin{aligned}
dy_{i}(t)=&~[A(t)y_{i}(t)+B(t)u_{i}(t)+F(t)y^{(N)}(t)+b(t)]dt \\
&+[C(t)y_{i}(t)+D(t)u_{i}(t)+H(t)y^{(N)}(t)+\sigma (t)]dW_{i}(t) \\
&+[\widetilde{C}(t)y_{i}(t)+\widetilde{D}(t)u_{i}(t)+\widetilde{H}(t)y^{(N)}(t)+\widetilde{\sigma }(t)]d\widetilde{W}_{i}(t),
\\
y_{i}(0)=&~x,%
\end{aligned}%
\right.
\end{equation}
and the perturbed centralized state of  $\mathcal{A}_j$ is given by
\begin{equation}\label{perturbed centralized state yj}
\left\{
\begin{aligned}
dy_{j}(t)=&~[A(t)y_{j}(t)+B(t)\bar{u}_{j}(t)+F(t)y^{(N)}(t)+b(t)]dt \\
&+[C(t)y_{j}(t)+D(t)\bar{u}_{j}(t)+H(t)y^{(N)}(t)+\sigma (t)]dW_{j}(t) \\
&+[\widetilde{C}(t)y_{j}(t)+\widetilde{D}(t)\bar{u}_{j}(t)+\widetilde{H}(t)y^{(N)}(t)+\widetilde{\sigma }(t)]d\widetilde{W}_{j}(t),
\\
y_{j}(0)=&~x,%
\end{aligned}%
\right.
\end{equation}
where $y^{(N)}(t)=\frac{1}{N}\sum_{i=1}^N y_{i}(t)$. Moreover, the corresponding decentralized states with perturbation satisfy
\begin{equation}
\left\{
\begin{aligned}\label{pyi}
d\bar{y}_{i}(t)=&~[A(t)\bar{y}_{i}(t)+B(t)u_{i}(t)+F(t)l(t)+b(t)]dt \\
&+[C(t)\bar{y}_{i}(t)+D(t)u_{i}(t)+H(t)l(t)+\sigma (t)]dW_{i}(t) \\
&+[\widetilde{C}(t)\bar{y}_{i}(t)+\widetilde{D}(t)u_{i}(t)+\widetilde{H}(t)l(t)+\widetilde{\sigma }(t)]d\widetilde{W}_{i}(t), \\
\bar{y}_{i}(0)=&~x,%
\end{aligned}%
\right.
\end{equation}
and
\begin{equation}
\left\{
\begin{aligned}\label{pyj}
d\bar{y}_{j}(t)=&~[A(t)\bar{y}_{j}(t)+B(t)\bar{u}_{j}(t)+F(t)l(t)+b(t)]dt \\
&+[C(t)\bar{y}_{j}(t)+D(t)\bar{u}_{j}(t)+H(t)l(t)+\sigma (t)]dW_{j}(t) \\
&+[\widetilde{C}(t)\bar{y}_{j}(t)+\widetilde{D}(t)\bar{u}_{j}(t)+\widetilde{H}(t)l(t)+\widetilde{\sigma }(t)]d\widetilde{W}_{j}(t), \\
\bar{y}_{j}(0)=&~x.%
\end{aligned}%
\right.
\end{equation}%
To show that $\bar{u}(\cdot)=(\bar{u}_1(\cdot),\ldots,\bar{u}_N(\cdot))$ is an $\varepsilon$-Nash equilibrium, we need to prove
\begin{equation*}
\mathcal{J}_{i}(\bar{u}_{i}(\cdot ),\bar{u}_{-i}(\cdot ))-\varepsilon \leq
\underset{u_{i}\left( \cdot \right) \in \mathcal{U}_{ad}^{c}}{\inf }\mathcal{J%
}_{i}(u_{i}(\cdot ),\bar{u}_{-i}(\cdot )),~\text{for any}~u_{i}\left( \cdot \right) \in \mathcal{U}_{ad}^{c}.
\end{equation*}%
Therefore,  it only needs to consider the alternative control $u_i(\cdot)\in \mathcal{U}_{ad}^{c}$ s.t.
$\mathcal{J}_{i}(\bar{u}_{i}(\cdot ),\bar{u}_{-i}(\cdot ))\geq\mathcal{J%
}_{i}(u_{i}(\cdot ),\bar{u}_{-i}(\cdot ))$. Thus,
\begin{equation*}
\mathbb{E}\int_{0}^{T}\langle R(t)u_{i}(t),u_{i}(t)\rangle dt\leq \mathcal{J}%
_{i}(u_{i}(\cdot ),\bar{u}_{-i}(\cdot ))\leq \mathcal{J}_{i}(\bar{u}_{i}(\cdot ),%
\bar{u}_{-i}(\cdot ))\leq J_{i}(\bar{u}_{i}(\cdot ))+O(\frac{1}{\sqrt{N}}),
\end{equation*}%
which implies $\mathbb{E}\int_{0}^{T}|u_{i}(t)|^{2}dt\leq K$. Then, we have the following estimates for the perturbed state and cost functional.
\begin{lemma}\label{yestimate}
Let \textup{(H1)} and \textup{(H2)} hold, then the following estimate holds
\begin{equation*}
\mathbb{E}\underset{0\leq t\leq T}{\sup }|y^{(N)}(t)-l(t)|^{2}+
\underset{1\leq i\leq N}{\sup }\mathbb{E}\underset{0\leq t\leq T}{\sup }%
|y_{i}(t)-\bar{y}_{i}(t)|^{2}=O(\frac{1}{N}).
\end{equation*}
\end{lemma}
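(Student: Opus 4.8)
The plan is to mirror the proof of Lemma \ref{averageerror}, treating the perturbation of the $i$-th agent as a negligible $O(1/N)$ correction to the averaged dynamics. First I would derive the SDE satisfied by $y^{(N)}(\cdot)-l(\cdot)$: averaging \eqref{perturbed centralized state} and \eqref{perturbed centralized state yj} over all $N$ agents and subtracting \eqref{lequation} (so that $b$ cancels), the drift of $y^{(N)}-l$ becomes
\[
(A+F)(y^{(N)}-l)+B\Big(\tfrac{1}{N}u_i+\tfrac{1}{N}\sum_{j\neq i}\bar{u}_j-\mathbb{E}[\bar{u}_i]\Big),
\]
while the diffusion terms are $\frac{1}{N}\sum_{k=1}^N[\,\cdot\,]dW_k$ and $\frac{1}{N}\sum_{k=1}^N[\,\cdot\,]d\widetilde{W}_k$, of exactly the same structure as in Lemma \ref{averageerror}.

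The crux is controlling the averaged-control perturbation term, which I would split as
\[
\tfrac{1}{N}u_i+\tfrac{1}{N}\sum_{j\neq i}\bar{u}_j-\mathbb{E}[\bar{u}_i]=\tfrac{1}{N}(u_i-\bar{u}_i)+\Big(\tfrac{1}{N}\sum_{k=1}^N\bar{u}_k-\mathbb{E}[\bar{u}_i]\Big).
\]
The first piece is $O(1/N^2)$ in $\mathbb{E}\int_0^T|\cdot|^2dt$, using the a priori bound $\mathbb{E}\int_0^T|u_i|^2dt\leq K$ established just before the lemma together with the analogous bound for $\bar{u}_i$. The second piece is precisely the quantity estimated in \eqref{firstestimate}: since $\{\bar{u}_k\}$ are i.i.d. with $\mathbb{E}[\bar{u}_k]=\mathbb{E}[\bar{u}_i]$, the law of large numbers gives $\mathbb{E}\int_0^T|\cdot|^2dt=O(1/N)$. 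Hence the whole perturbation term is $O(1/N)$.

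With this bound in place, the first estimate follows exactly as in Lemma \ref{averageerror}: applying the BDG inequality to the diffusion terms (whose independence produces a $\frac{1}{N^2}\sum_k$ factor and hence an $O(1/N)$ contribution, modulo the $H(y^{(N)}-l)$ and $\widetilde{H}(y^{(N)}-l)$ pieces handled as in \eqref{secondest}), together with the uniform bounds $\mathbb{E}\sup_t|y_i|^2\leq K$ and $\mathbb{E}\sup_t|l|^2\leq K$ obtained from standard SDE estimates using $\mathbb{E}\int_0^T|u_i|^2dt\leq K$, and then Gronwall's inequality yields $\mathbb{E}\sup_{0\leq t\leq T}|y^{(N)}(t)-l(t)|^2=O(1/N)$. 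For the second estimate I would subtract \eqref{pyi} from \eqref{perturbed centralized state}; since $y_i$ and $\bar{y}_i$ carry the \emph{same} control $u_i$, the control terms cancel and $y_i-\bar{y}_i$ solves a linear SDE with zero initial value whose only inhomogeneity is $F(y^{(N)}-l)$ in the drift and $H(y^{(N)}-l),\widetilde{H}(y^{(N)}-l)$ in the diffusion. A direct BDG-plus-Gronwall estimate then bounds $\mathbb{E}\sup_{0\leq t\leq T}|y_i(t)-\bar{y}_i(t)|^2$ by a constant multiple of $\mathbb{E}\int_0^T|y^{(N)}(t)-l(t)|^2dt=O(1/N)$, and since the constant is uniform in $i$ this gives the claimed $\sup_{1\leq i\leq N}\mathbb{E}\sup_{0\leq t\leq T}|y_i(t)-\bar{y}_i(t)|^2=O(1/N)$.

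The main obstacle, and the only genuine departure from Lemma \ref{averageerror}, is the bookkeeping for the perturbed agent: one must carefully isolate the $\frac{1}{N}(u_i-\bar{u}_i)$ contribution and dispose of it through the a priori estimate $\mathbb{E}\int_0^T|u_i|^2dt\leq K$, because $u_i$ is an arbitrary admissible control and carries no i.i.d. structure, whereas it is the remaining $N-1$ decentralized controls $\bar{u}_j$ that retain the i.i.d. property driving the law-of-large-numbers estimate \eqref{firstestimate}.
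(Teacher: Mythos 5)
Your proposal is correct and follows essentially the same route as the paper's proof: the same SDE for $y^{(N)}(\cdot)-l(\cdot)$, the same reliance on the a priori bound $\mathbb{E}\int_0^T|u_i(t)|^2dt\leq K$ for the perturbed control and on the i.i.d.\ property of the decentralized controls, BDG plus Gronwall for the first estimate, and cancellation of the common control $u_i(\cdot)$ followed by standard SDE estimates for $y_i(\cdot)-\bar{y}_i(\cdot)$. The only (immaterial) difference is bookkeeping of the control perturbation: you add and subtract $\frac{1}{N}\bar{u}_i$ so as to reuse \eqref{firstestimate} over all $N$ agents, whereas the paper keeps $\frac{1}{N}u_i$ separate and bounds $\frac{1}{N}\sum_{j\neq i}\bar{u}_j-\mathbb{E}[\bar{u}_i]$ directly by renormalizing over the $N-1$ unperturbed agents.
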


\begin{proof}
By recalling \eqref{lequation}, \eqref{perturbed centralized state} and \eqref{perturbed centralized state yj}, we have
\begin{equation*}
\begin{aligned}
&\mathbb{E}\underset{0\leq t\leq T}{\sup }|y^{(N)}(t)-l(t)|^{2}\\
\leq& K\mathbb{E}\int_0^T(|y^{(N)}(t)-l(t)|^{2}+\frac{1}{N^2}|u_i(t)|^2)ds+K\mathbb{E}\int_0^T|\frac{1}{N}\overset{N}{\underset{j=1,j\neq i}{\sum }}\bar{u}_{j}(t)-\mathbb{E}[\bar{u}_i(t)]|^2dt\\
&+\frac{K}{N^2}\mathbb{E}\int_0^T|u_i(t)|^2dt+\frac{K}{N^2}\mathbb{E}\overset{N}{\underset{j=1}{\sum}}\int_0^T|C(t)y_j(t)+H(t)(y^{(N)}(t)-l(t))+H(t)l(t)+\sigma(t)|^2dt\\
&+\frac{K}{N^2}\mathbb{E}\overset{N}{\underset{j=1}{\sum}}\int_0^T|\widetilde{C}(t)y_j(t)+\widetilde{H}(t)(y^{(N)}(t)-l(t))+\widetilde{H}(t)l(t)+\widetilde{\sigma}(t)|^2dt+\frac{K}{N^2}\mathbb{E}\overset{N}{\underset{j=1,j\neq i}{\sum }}\int_0^T|\bar{u}_j(t)|^2dt.
\end{aligned}
\end{equation*}
Since $\{\bar{u}_i(\cdot)\}$ are i.i.d, it follows that $\mathbb{E}[\bar{u}_i(\cdot)]=\mathbb{E}[\bar{u}_j(\cdot)]$, for $1\leq i,j \leq N$ and $j\neq i$. Denote $\mu(t)=\mathbb{E}[\bar{u}_i(t)]$, we have
\begin{equation*}
\begin{aligned}
\int_0^T\mathbb{E}|\frac{1}{N}\overset{N}{\underset{j=1,j\neq i}{\sum }}\bar{u}_{j}(t)-\mu(t)|^2dt
\leq&\frac{2(N-1)^2}{N^2}\int_0^T\mathbb{E}|\frac{1}{N-1}\overset{N}{\underset{j=1,j\neq i}{\sum }}\bar{u}_{j}(t)-\mu(t)|^2dt+\frac{2}{N^2}\int_0^T\mathbb{E}|\mu(t)|^2dt\\
=&\frac{2(N-1)}{N^2}\int_0^T\mathbb{E}|\bar{u}_{j}(t)-\mu(t)|^2dt+\frac{2}{N^2}\int_0^T\mathbb{E}|\mu(t)|^2dt=O(\frac{1}{N}).
\end{aligned}
\end{equation*}
Similar to \eqref{secondest}, by using the fact that $\mathbb{E}\underset{0\leq t\leq T}{\sup}|y_i(t)|^2\leq K$, and recalling $\mathbb{E}\int_{0}^{T}|u_{i}(t)|^{2}dt\leq K$,  we have
\begin{equation*}
\begin{aligned}
&\frac{K}{N^2}\mathbb{E}\int_0^T|u_i(t)|^2dt+\frac{K}{N^2}\mathbb{E}\overset{N}{\underset{j=1}{\sum}}\int_0^T|C(t)y_j(t)+H(t)(y^{(N)}(t)-l(t))+H(t)l(t)+\sigma(t)|^2dt\\
&+\frac{K}{N^2}\mathbb{E}\overset{N}{\underset{j=1}{\sum}}\int_0^T|\widetilde{C}(t)y_j(t)+\widetilde{H}(t)(y^{(N)}(t)-l(t))+\widetilde{H}(t)l(t)+\widetilde{\sigma}(t)|^2dt\leq \frac{K}{N}\Big(1+\mathbb{E}\int_0^T|y^{(N)}(t)-l(t)|^{2}dt\Big).
\end{aligned}
\end{equation*}
Moreover, by i.i.d property of $\bar{u}_i(\cdot)$, we get
$\frac{K}{N^2}\mathbb{E}\overset{N}{\underset{j=1,j\neq i}{\sum }}\int_0^T|\bar{u}_j(t)|^2dt=O(\frac{1}{N})$.
Synthesizing above estimates, we have
\begin{equation*}
\mathbb{E}\underset{0\leq t\leq T}{\sup }|y^{(N)}(t)-l(t)|^{2}\leq K\mathbb{E%
}\int_{0}^{T}|y^{(N)}(t)-l(t)|^{2}dt+O(\frac{1}{N}).
\end{equation*}%
Finally, by recalling \eqref{perturbed centralized state}-\eqref{pyj}, with the help of standard SDE estimates, we can complete the proof. \hfill$\square$
\end{proof}
\smallskip

By using Lemma \ref{yestimate}, similar to the proof of Lemma \ref{errorcost1}, we have the following result,  whose proof is omitted.
\begin{lemma}\label{errorcost2}
Let \textup{(H1)} and \textup{(H2)} hold, we have
\begin{equation*}
|\mathcal{J}_{i}(u_{i}(\cdot ),\bar{u}_{-i}(\cdot ))-J_{i}(u_{i}(\cdot ))|=O(%
\frac{1}{\sqrt{N}})
, \text{ for }1\leq i \leq N.
\end{equation*}
\end{lemma}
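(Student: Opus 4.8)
The plan is to imitate the proof of Lemma \ref{errorcost1}, replacing the decentralized strategy $\bar{u}_i(\cdot)$ by the alternative control $u_i(\cdot)$ throughout, and replacing the role of Lemma \ref{averageerror} by Lemma \ref{yestimate}. First I would subtract the two cost functionals. Recall that $J_i(u_i(\cdot))$ is precisely the limiting cost \eqref{lcost} built on the perturbed decentralized state $\bar{y}_i(\cdot)$ solving \eqref{pyi} together with the frozen average $l(\cdot)$, whereas $\mathcal{J}_i(u_i(\cdot),\bar{u}_{-i}(\cdot))$ is the cost \eqref{cost} built on the perturbed centralized state $y_i(\cdot)$ solving \eqref{perturbed centralized state} together with the true average $y^{(N)}(\cdot)$. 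Since both functionals carry the identical control penalty $\langle R u_i,u_i\rangle$, this term cancels exactly, and only the $Q$-weighted running term and the $G$-weighted terminal term survive in the difference.

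Next I would apply the algebraic identity $\langle Qa,a\rangle-\langle Qb,b\rangle=\langle Q(a-b),a-b\rangle+2\langle Q(a-b),b\rangle$ with $a=y_i-y^{(N)}$ and $b=\bar{y}_i-l$, so that $a-b=(y_i-\bar{y}_i)-(y^{(N)}-l)$. The quadratic piece is bounded by $K\,\mathbb{E}\int_0^T|a-b|^2\,dt$, which is $O(\tfrac{1}{N})$ thanks to Lemma \ref{yestimate} and the triangle inequality on its two constituent errors. The cross piece is controlled by Cauchy--Schwarz as $K\int_0^T(\mathbb{E}|a-b|^2)^{1/2}(\mathbb{E}|b|^2)^{1/2}\,dt$; here I would again invoke Lemma \ref{yestimate} to obtain $(\mathbb{E}|a-b|^2)^{1/2}=O(\tfrac{1}{\sqrt{N}})$, together with a uniform-in-$N$ moment bound $\mathbb{E}\sup_{0\le t\le T}|\bar{y}_i(t)-l(t)|^2\le K$. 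This last bound follows from standard SDE estimates applied to \eqref{pyi}, the a priori bound $\mathbb{E}\int_0^T|u_i(t)|^2\,dt\le K$ already established just above the statement, and $\mathbb{E}\sup_{0\le t\le T}|l(t)|^2\le K$. The terminal $G$-term is handled verbatim with $a(T),b(T)$ in place of $a,b$. Collecting these estimates yields the difference of order $O(\tfrac{1}{\sqrt{N}})$.

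The step I expect to be the (mild) bottleneck is the cross term: the error $a-b$ itself decays like $\tfrac{1}{N}$, but pairing it against the $O(1)$ reference quantity $b$ through Cauchy--Schwarz only produces $O(\tfrac{1}{\sqrt{N}})$, and it is precisely this mismatch in rates that dictates the final order. Everything else is a routine Gronwall and BDG computation, structurally identical to that of Lemma \ref{errorcost1}; the only genuinely new input specific to the perturbed configuration is the uniform moment control on $y_i(\cdot)$, $\bar{y}_i(\cdot)$ and $y^{(N)}(\cdot)$, which rests entirely on the admissibility bound $\mathbb{E}\int_0^T|u_i(t)|^2\,dt\le K$. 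Since all constants produced are independent of $N$ and $i$, the estimate holds uniformly in $1\le i\le N$, completing the argument.
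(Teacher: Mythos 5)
Your proposal is correct and is exactly the argument the paper intends: the paper omits the proof of Lemma \ref{errorcost2} precisely because it follows the proof of Lemma \ref{errorcost1} verbatim, with the identity $\langle Qa,a\rangle-\langle Qb,b\rangle=\langle Q(a-b),a-b\rangle+2\langle Q(a-b),b\rangle$ applied to $a=y_i-y^{(N)}$, $b=\bar{y}_i-l$, and with Lemma \ref{yestimate} (plus the admissibility bound $\mathbb{E}\int_0^T|u_i(t)|^2dt\leq K$ and uniform moment bounds on $\bar y_i$ and $l$) replacing Lemma \ref{averageerror}. Your identification of the Cauchy--Schwarz cross term as the rate-determining step, giving $O(\tfrac{1}{\sqrt{N}})$ rather than $O(\tfrac{1}{N})$, matches the structure of the paper's argument for Lemma \ref{errorcost1}.
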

Based on above Lemmas, we can give the following main result of this section.
\begin{theorem}\label{generalnash}
Let \textup{(H1)} and \textup{(H2)} hold. Assume that $4\lambda ^{\ast }<-2|F|-6|C|^{2}-6|\widetilde{C}|^{2}-5|H|^{2}-5|\widetilde{H}|^{2}$, there exists a constant $\theta_1>0$ independent of $T$, which may depend on $\lambda^{\ast}$, $|C|$, $|\widetilde{C}|$, $|F|$, $|H|$, $|\widetilde{H}|$, $|Q|$, $|G|$, when $|B|$, $|D|$, $|\widetilde{D}|$ and $|R^{-1}|\in[0,\theta_1)$, then the strategy profile $(\bar{u}_1(\cdot),\ldots,\bar{u}_N(\cdot))$ with $\bar{u}_i(\cdot)$ given by \eqref{gcontrol} is an $\varepsilon$-Nash equilibrium of Problem (LP), where $(\bar{z}_i(\cdot),\bar{p}_i(\cdot),\bar{k}_i(\cdot),\bar{\tilde{k}}_i(\cdot))$ is the unique solution to the FBSDE \eqref{CC}.
\end{theorem}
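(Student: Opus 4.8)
The plan is to obtain the $\varepsilon$-Nash property by stringing together the estimates already established, with $\varepsilon$ forced to be of order $1/\sqrt N$. First I would record that, under the stated dissipativity and smallness hypotheses, Theorem \ref{Hwellposedness} makes the consistency system \eqref{CC} uniquely solvable, so that each $\bar u_i(\cdot)$ defined by \eqref{gcontrol} is a genuine element of $\mathcal U_{ad}^{d,i}$ and the profile $(\bar u_1,\dots,\bar u_N)$ is well defined. Fixing $i$ and an arbitrary competitor $u_i(\cdot)\in\mathcal U_{ad}^c$, I would dispose of the trivial case $\mathcal J_i(u_i,\bar u_{-i})\geq \mathcal J_i(\bar u_i,\bar u_{-i})$ and reduce to controls with $\mathcal J_i(u_i,\bar u_{-i})\leq \mathcal J_i(\bar u_i,\bar u_{-i})$; as noted just before Lemma \ref{yestimate}, this forces the a priori bound $\mathbb E\int_0^T|u_i(t)|^2dt\leq K$ on which Lemmas \ref{yestimate} and \ref{errorcost2} rest.

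The core of the argument is then the three-step comparison
\[
\mathcal J_i(\bar u_i,\bar u_{-i})\;\leq\; J_i(\bar u_i)+O(\tfrac1{\sqrt N})\;\leq\; J_i(u_i)+O(\tfrac1{\sqrt N})\;\leq\;\mathcal J_i(u_i,\bar u_{-i})+O(\tfrac1{\sqrt N}),
\]
in which the outer two inequalities are exactly Lemmas \ref{errorcost1} and \ref{errorcost2}, while the middle one expresses the optimality of $\bar u_i$ for the limiting problem (LLP). Choosing $\varepsilon=O(1/\sqrt N)\to 0$ as $N\to\infty$ then finishes the proof.

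The step I expect to be the real obstacle is the middle inequality $J_i(\bar u_i)\leq J_i(u_i)$, since $\bar u_i$ minimizes $J_i$ only over the decentralized class $\mathcal U_{ad}^{d,i}$, whereas $u_i$ ranges over the strictly larger centralized class $\mathcal U_{ad}^c$. To bridge this I would introduce the filtered competitor $\hat u_i(t):=\mathbb E[u_i(t)\,|\,\mathcal G_t^i]$, which again lies in $\mathcal U_{ad}^{d,i}$ because $\Gamma$ is closed and convex. Expanding $J_i(u_i)-J_i(\bar u_i)$ along the adjoint equation \eqref{adjoint} and using $Q\geq0$, $G\geq0$, $R\gg0$, convexity leaves a nonnegative quadratic term plus the linear term $-\mathbb E\int_0^T\langle \frac{\partial\mathcal H_i}{\partial u_i},\,u_i-\bar u_i\rangle\,dt$, so everything reduces to showing this linear term is nonpositive. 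Here I would exploit the partial-information structure: $\frac{\partial\mathcal H_i}{\partial u_i}(t)$ is $\mathcal F_t^i$-measurable, and conditionally on $\mathcal G_t^i=\sigma\{W_i(s):s\leq t\}$ its remaining randomness lies in $\widetilde W_i$ while that of $u_i(t)$ lies in $\{W_j:j\neq i\}$; mutual independence of the driving noises yields conditional independence, whence $\mathbb E\langle \frac{\partial\mathcal H_i}{\partial u_i},u_i-\bar u_i\rangle=\mathbb E\langle \mathbb E[\frac{\partial\mathcal H_i}{\partial u_i}\,|\,\mathcal G_t^i],\,\hat u_i-\bar u_i\rangle$. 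Since $\hat u_i(t)\in\Gamma$, the variational inequality produced by the maximum principle (the display just before \eqref{gcontrol}) makes this integrand pointwise nonpositive, delivering $J_i(\bar u_i)\leq J_i(u_i)$ for every $u_i\in\mathcal U_{ad}^c$ and closing the chain.
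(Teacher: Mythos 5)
Your proposal is correct, and its skeleton is exactly the paper's proof: the published argument consists solely of the chain
$\mathcal{J}_{i}(\bar{u}_{i},\bar{u}_{-i})=J_{i}(\bar{u}_{i})+O(\tfrac{1}{\sqrt{N}})\leq J_{i}(u_{i})+O(\tfrac{1}{\sqrt{N}})=\mathcal{J}_{i}(u_{i},\bar{u}_{-i})+O(\tfrac{1}{\sqrt{N}})$,
with the two outer estimates supplied by Lemmas \ref{errorcost1} and \ref{errorcost2} (after the reduction, made before Lemma \ref{yestimate}, to competitors satisfying $\mathbb{E}\int_0^T|u_i(t)|^2dt\leq K$), and well-posedness of \eqref{CC} from Theorem \ref{Hwellposedness} guaranteeing that $\bar{u}(\cdot)$ is well defined. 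Where you go beyond the paper is the middle inequality $J_i(\bar{u}_i)\leq J_i(u_i)$: the paper invokes it tacitly as the optimality of $\bar{u}_i$ in Problem (LLP), even though the maximum principle and the projection formula \eqref{gcontrol} only characterize $\bar{u}_i$ as optimal over the decentralized class $\mathcal{U}_{ad}^{d,i}$, whereas the competitor $u_i$ is $\mathcal{G}_t$-adapted. Your repair of this step is sound: by duality (It\^o's formula applied to $\langle\bar{p}_i,\bar{y}_i-\bar{z}_i\rangle$) and convexity of the cost ($Q\geq0$, $G\geq0$, $R\gg0$) one gets $J_i(u_i)-J_i(\bar{u}_i)\geq-\mathbb{E}\int_0^T\langle\frac{\partial\mathcal{H}_i}{\partial u_i},u_i-\bar{u}_i\rangle dt$; since $\frac{\partial\mathcal{H}_i}{\partial u_i}=B^{\top}\bar{p}_i+D^{\top}\bar{k}_i+\widetilde{D}^{\top}\bar{\widetilde{k}}_i-R\bar{u}_i$ is $\mathcal{F}_t^i$-measurable and $\sigma\{W_i,\widetilde{W}_i\}$ is independent of $\sigma\{W_j,\,j\neq i\}$, conditioning first on $\mathcal{G}_t$ and then on $\mathcal{G}_t^i$ legitimately replaces $u_i$ by $\hat{u}_i=\mathbb{E}[u_i|\mathcal{G}_t^i]$, which is $\Gamma$-valued because $\Gamma$ is closed and convex, so the variational inequality preceding \eqref{gcontrol} (extended from deterministic points of $\Gamma$ to $\mathcal{G}_t^i$-measurable $\Gamma$-valued selections, by density) renders the linear term nonpositive. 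In short, your write-up is a completion of the paper's proof rather than a different one: what it buys is a rigorous justification that decentralized optimality in the limiting problem suffices against centralized deviations, which is precisely the point the published two-line proof leaves implicit.
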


\begin{proof}
From Lemmas \ref{errorcost1} and \ref{errorcost2}, we obtain that for $1\leq i \leq N$,
$
\mathcal{J}_{i}(\bar{u}_{i}(\cdot ),\bar{u}_{-i}(\cdot ))=J_{i}(\bar{u}%
_{i}(\cdot ))+O(\frac{1}{\sqrt{N}})
\leq J_{i}(u_{i}(\cdot ))+O(\frac{1}{\sqrt{N}})
=\mathcal{J}_{i}(u_{i}(\cdot ),\bar{u}_{-i}(\cdot ))+O(\frac{1}{\sqrt{N}}),
$
which yields that $(\bar{u}_1(\cdot),\ldots,\bar{u}_N(\cdot))$ is an $\varepsilon$-Nash equilibrium.   \hfill$\square$
\end{proof}

\subsection{Control Unconstrained Case: Riccati Approach}\label{subsec:2}
In this subsection, we consider the control unconstrained case i.e. $\Gamma=\mathbb{R}^m$. We will use Riccati approach to represent the decentralized strategies as the feedback of filtered state.
Moreover, we introduce the following assumption.

\textup{(H3)} It holds that \[F=\delta I ~\text{and}~ H=\widetilde{H}=\widetilde{C}=0,~\text{where}~\delta~\text{is a constant.}\]

For simplicity, we denote $\hat{f}_i(t)=\mathbb{E}[f_i(t)|\mathcal{G}_t^i]$ as the filtering of $f_i(t)$ w.r.t. $\mathcal{G}_t^i$, for $1\leq i \leq N$. With above setting, the decentralized strategies give by \eqref{gcontrol} will be reduced to, for $1\leq i \leq N$,
\begin{equation}\label{open-loop}
\begin{aligned}
\bar{u}_{i}(t)=&R(t)^{-1}(B^{\top }(t)\mathbb{E}[\bar{p}%
_{i}(t)|\mathcal{G}_{t}^{i}]+D^{\top }(t)\mathbb{E}[\bar{k}_{i}(t)|\mathcal{G%
}_{t}^{i}]+\widetilde{D}^{\top }(t)\mathbb{E}[\bar{\widetilde{k}}_{i}(t)|\mathcal{G%
}_{t}^{i}]),\\
=&R(t)^{-1}(B^{\top }(t)\hat{\bar{p}}
_{i}(t)+D^{\top }(t)\hat{\bar{k}}_{i}(t)+\widetilde{D}^{\top }(t)\hat{\bar{\widetilde{k}}}_{i}(t)),
\end{aligned}
\end{equation}
where $(\bar{z}_i(\cdot),\bar{p}_i(\cdot),\bar{k}_i(\cdot),\bar{\widetilde{k}}_i(\cdot))$ solves the following Hamiltonian type CC system which is a MF-FBSDE
\begin{equation}
\left\{
\begin{aligned}\label{sCC}
d\bar{z}_{i}(t)=&\{A(t)\bar{z}_{i}(t)+B(t)R^{-1}(t)(B^{\top }(t)\mathbb{E}[\bar{p}_{i}(t)|\mathcal{G}%
_{t}^{i}]+D^{\top }(t)\mathbb{E}[\bar{k}_{i}(t)|\mathcal{G}_{t}^{i}] \\
&+\widetilde{D}^{\top }(t)\mathbb{E}[\bar{\widetilde{k}}_{i}(t)|\mathcal{G}%
_{t}^{i}])+\delta \mathbb{E}[\bar{z}_{i}(t)]+b(t)\}dt\\
&+\{C(t)\bar{z}_{i}(t)+D(t)R^{-1}(t)(B^{\top }(t)\mathbb{E}[\bar{p}_{i}(t)|%
\mathcal{G}_{t}^{i}]+D^{\top }(t)\mathbb{E}[\bar{k}_{i}(t)|\mathcal{G}%
_{t}^{i}]\\
&+\widetilde{D}^{\top }(t)\mathbb{E}[\bar{\widetilde{k}}_{i}(t)|\mathcal{G}%
_{t}^{i}])+\sigma (t)\}dW_{i}(t)\\
&+\{\widetilde{D}(t)R^{-1}(t)(B^{\top }(t)\mathbb{E}[\bar{p%
}_{i}(t)|\mathcal{G}_{t}^{i}]+D^{\top }(t)\mathbb{E}[\bar{k}_{i} (t)|%
\mathcal{G}_{t}^{i}]\\
&+\widetilde{D}^{\top }(t)\mathbb{E}[\bar{\widetilde{k}}_{i}(t)|\mathcal{G}%
_{t}^{i}])+\widetilde{\sigma }(t)\}d\widetilde{W}%
_{i}(t), \\
d\bar{p}_{i}(t)=&-\{A^{\top }(t)\bar{p}_{i}(t)+C^{\top }(t)\bar{k}_{i} \left(
t\right)
-Q(t)(\bar{z}_{i}\left( t\right)-\mathbb{E}[\bar{z}_{i}(t)]\}dt\\
&+\bar{k}_{i}\left( t\right) dW_{i}(t)+\bar{\widetilde{k}}_{i}
\left( t\right) d\widetilde{W}_{i}(t), \\
\bar{z}_{i}(0)=&~x,~\bar{p}_{i}(T)=-G(\bar{z}_{i}\left( T\right) -\mathbb{E}[\bar{z}_{i}(T)]).
\end{aligned}
\right.
\end{equation}%
Moreover, in the framework of this subsection, the above decentralized strategies can be further represented as the feedback of filtered state by Riccati approach as given in the following theorem.
\begin{theorem}\label{specialu}
Let \textup{(H1)}-\textup{(H3)} hold. Suppose $\Gamma=\mathbb{R}^m$, then the decentralized strategies can be represented as
\begin{equation}
\begin{aligned}\label{scontrol}
\bar{u}_i(t)=&-\widetilde{R}(t)^{-1}\widetilde{P}^{\top}(t)\hat{\bar{z}}_i(t)-\widetilde{R}(t)^{-1}B^{\top}(t)\Lambda(t)l(t)\\
&-\widetilde{R}(t)^{-1}(B^{\top}(t)\Phi(t)+D^{\top}(t)P(t)\sigma(t)+\widetilde{D}^{\top}(t)P(t)\widetilde{\sigma}(t)), ~~~1\leq i \leq N,
\end{aligned}
\end{equation}
with
\begin{equation}\label{tildeR}
\widetilde{R}(t)=R(t)+D^{\top}(t)P(t)D(t)+\widetilde{D}^{\top}(t)P(t)\widetilde{D}(t),
\end{equation}
\begin{equation}\label{tildeP}
\widetilde{P}(t)=P(t)B(t)+C^{\top}(t)P(t)D(t),
\end{equation}
where $P(\cdot)$ and $\Lambda(\cdot)$ solve the following Riccati equations, respectively
\begin{equation}
\left\{
\begin{aligned}\label{P}
&\dot{P}(t)+P(t)A(t)+A^{\top}(t)P(t)+C^{\top}(t)P(t)C(t)+Q(t)-\widetilde{P}(t)\widetilde{R}(t)^{-1}\widetilde{P}^{\top}(t)=0,\\
&P(T)=~G,
\end{aligned}
\right.
\end{equation}
\begin{equation}
\left\{
\begin{aligned}\label{Lambda}
&\dot{\Lambda}(t)+\Lambda(t)(A(t)-B(t)\widetilde{R}(t)^{-1}\widetilde{P}^{\top}(t))+(A(t)-B(t)\widetilde{R}(t)^{-1}\widetilde{P}^{\top}(t))^{\top}\Lambda(t)\\
&\qquad+(P(t)+\Lambda(t))\delta -\Lambda(t)B(t)\widetilde{R}(t)^{-1}B^{\top}(t)\Lambda(t)-Q(t)=0,\\
&\Lambda(T)=-G,
\end{aligned}
\right.
\end{equation}
$\Phi(\cdot)$ solves the following standard ordinary differential equation (ODE)
\begin{equation}
\left\{
\begin{aligned}\label{Phi}
&\dot{\Phi}(t)+(A^{\top}(t)-\widetilde{P}(t)\widetilde{R}(t)^{-1}B^{\top}(t)-\Lambda(t)B(t)\widetilde{R}(t)^{-1}B^{\top}(t))\Phi(t)+(C^{\top}(t)\\
&\qquad-\widetilde{P}(t)\widetilde{R}(t)^{-1}D^{\top}(t)-\Lambda(t)B(t)\widetilde{R}(t)^{-1}D^{\top}(t))P(t)\sigma(t)-(\widetilde{P}(t)\widetilde{R}(t)^{-1}\widetilde{D}^{\top}(t)\\
&\qquad+\Lambda(t)B(t)\widetilde{R}(t)^{-1}\widetilde{D}^{\top}(t))P(t)\widetilde{\sigma}(t)+(P(t)+\Lambda(t))b(t)=0,\\
&\Phi(T)=~0,
\end{aligned}
\right.
\end{equation}
$l(\cdot)$ representing the limit value of the state average solves
\begin{equation}
\left\{
\begin{aligned}\label{l}
dl(t)=&\{[A(t)+\delta -B(t)\widetilde{R}(t)^{-1}(\widetilde{P}^{\top}(t)+B^{\top}(t)\Lambda(t))]l(t)+b(t)\\
&-B(t)\widetilde{R}(t)^{-1}(B^{\top}(t)\Phi(t)+D^{\top}(t)P(t)\sigma(t)+\widetilde{D}^{\top}(t)P(t)\widetilde{\sigma}(t))\}dt,\\
l(0)=&~x,
\end{aligned}
\right.
\end{equation}
and the optimal filtering $\hat{\bar{z}}_i(\cdot)$ solves the following SDE
\begin{equation}
\left\{
\begin{aligned}\label{z}
&d\hat{\bar{z}}_i(t)=\{(A(t)-B(t)\widetilde{R}(t)^{-1}\widetilde{P}^{\top}(t))\hat{\bar{z}}_i(t)+(\delta-B(t)\widetilde{R}(t)^{-1}B^{\top}(t)\Lambda(t))l(t)\\
&\qquad-B(t)\widetilde{R}(t)^{-1}(B^{\top}(t)\Phi(t)+D^{\top}(t)P(t)\sigma(t)+\widetilde{D}^{\top}(t)P(t)\widetilde{\sigma}(t))+b(t)\}dt\\
&\qquad+\{(C(t)-D(t)\widetilde{R}(t)^{-1}\widetilde{P}^{\top}(t))\hat{\bar{z}}_i(t)-D(t)\widetilde{R}(t)^{-1}B^{\top}(t)\Lambda(t)l(t)\\
&\qquad-D(t)\widetilde{R}(t)^{-1}(B^{\top}(t)\Phi(t)+D^{\top}(t)P(t)\sigma(t)+\widetilde{D}^{\top}(t)P(t)\widetilde{\sigma}(t))+\sigma(t)\}dW_i(t),\\
&\hat{\bar{z}}_i(0)=~x.
\end{aligned}
\right.
\end{equation}
\end{theorem}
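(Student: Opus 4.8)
The plan is to decouple the conditional mean--field FBSDE \eqref{sCC} by combining optimal filtering with a Riccati-type ansatz, and then to read off the feedback representation \eqref{scontrol}. The logical skeleton is: (i) derive the filtering equations, (ii) impose a linear ansatz for the filtered adjoint, (iii) match It\^o coefficients to produce \eqref{P}, \eqref{Lambda}, \eqref{Phi}, and (iv) take expectations and substitute to obtain \eqref{l}, \eqref{z} and the feedback \eqref{scontrol}; uniqueness for \eqref{sCC} (Theorem \ref{Hwellposedness}, in the unconstrained case $\Gamma=\mathbb{R}^m$) then guarantees this is the genuine decentralized strategy.

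First I would establish the filtering equations. Only $W_i$ is observed, i.e. $\mathcal{G}_t^i=\sigma\{W_i(s),0\le s\le t\}\vee\mathcal{N}$. The essential simplification from \textup{(H3)} is that, since $\widetilde{C}=\widetilde{H}=0$, the coefficient $\widetilde{D}\bar{u}_i+\widetilde{\sigma}$ of the unobservable noise $d\widetilde{W}_i$ is $\mathcal{G}_t^i$-measurable, and $\widetilde{W}_i$ is independent of $W_i$; hence the $d\widetilde{W}_i$-martingale contributes nothing to the innovation, and the filter $\hat{\bar{z}}_i=\mathbb{E}[\bar{z}_i|\mathcal{G}_t^i]$ solves an SDE driven by $dW_i$ alone, with drift $A\hat{\bar{z}}_i+B\bar{u}_i+\delta l+b$ and diffusion $C\hat{\bar{z}}_i+D\bar{u}_i+\sigma$. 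Taking $\mathbb{E}[\cdot\,|\mathcal{G}_t^i]$ in the backward part of \eqref{sCC} gives the filtered adjoint equation for $(\hat{\bar{p}}_i,\hat{\bar{k}}_i,\hat{\bar{\widetilde{k}}}_i)$, which are precisely the quantities appearing in the optimality condition \eqref{open-loop}.

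Next I would impose the ansatz $\hat{\bar{p}}_i(t)=-P(t)\hat{\bar{z}}_i(t)-\Lambda(t)l(t)-\Phi(t)$, with $l=\mathbb{E}[\bar{z}_i]$ and $P,\Lambda,\Phi$ deterministic. The terminal condition $\bar{p}_i(T)=-G(\bar{z}_i(T)-l(T))$ forces $P(T)=G$, $\Lambda(T)=-G$, $\Phi(T)=0$, consistent with \eqref{P}--\eqref{Phi}. Applying It\^o's formula to the ansatz along $d\hat{\bar{z}}_i$, the diffusion coefficient identifies $\hat{\bar{k}}_i=-P(C\hat{\bar{z}}_i+D\bar{u}_i+\sigma)$ and, from the companion channel, $\hat{\bar{\widetilde{k}}}_i=-P(\widetilde{D}\bar{u}_i+\widetilde{\sigma})$. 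Substituting these three filtered quantities into \eqref{open-loop} and collecting the $\bar{u}_i$ terms produces the matrix $\widetilde{R}=R+D^{\top}PD+\widetilde{D}^{\top}P\widetilde{D}$ of \eqref{tildeR} (invertible since $R\gg0$ and $P\ge0$) and $\widetilde{P}=PB+C^{\top}PD$ of \eqref{tildeP}; solving for $\bar{u}_i$ yields exactly \eqref{scontrol}. Matching the drift of the ansatz with the filtered adjoint drift $-[A^{\top}\hat{\bar{p}}_i+C^{\top}\hat{\bar{k}}_i-Q(\hat{\bar{z}}_i-l)]$ and separating the $\hat{\bar{z}}_i$-, $l$- and constant-parts then gives the three equations: the $\hat{\bar{z}}_i$-part collapses, via $\widetilde{P}\widetilde{R}^{-1}\widetilde{P}^{\top}=(PB+C^{\top}PD)\widetilde{R}^{-1}\widetilde{P}^{\top}$, to the Riccati equation \eqref{P}; the $l$-part, after inserting the $l$-dynamics, gives \eqref{Lambda}; and the constant part gives the linear ODE \eqref{Phi}. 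To close the loop I would take full expectation in the filter SDE, using $\mathbb{E}[\hat{\bar{z}}_i]=\mathbb{E}[\bar{z}_i]=l$ and $\mathbb{E}[\bar{u}_i]=-\widetilde{R}^{-1}(\widetilde{P}^{\top}+B^{\top}\Lambda)l-\widetilde{R}^{-1}(B^{\top}\Phi+D^{\top}P\sigma+\widetilde{D}^{\top}P\widetilde{\sigma})$, to obtain \eqref{l}, and substitute \eqref{scontrol} back into the filter SDE to obtain \eqref{z}.

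The main obstacle is the rigorous filtering of the adjoint backward equation: because the diffusion in \eqref{state} depends on both the state and the control and feeds the observable ($dW_i$) as well as the unobservable ($d\widetilde{W}_i$) channels, one must justify that the filtered companions $\hat{\bar{k}}_i,\hat{\bar{\widetilde{k}}}_i$ genuinely take the stated $P$-linear form and that the ansatz is self-consistent; this hinges on showing that the conditional-mean dynamics decouple cleanly from the estimation error $\bar{z}_i-\hat{\bar{z}}_i$, which is exactly where the structural hypothesis \textup{(H3)} does the work. A second difficulty, which I would defer to subsection \ref{subsec:3}, is that the pair \eqref{P}--\eqref{Lambda} is \emph{not} a classical Riccati system: the extra term $\widetilde{D}^{\top}P\widetilde{D}$ in $\widetilde{R}$ and the typical failure of $\delta P-Q\ge0$ obstruct writing \eqref{Lambda} in standard form, so the derivation here must be complemented by the separate solvability argument given there.
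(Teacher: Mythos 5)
Your overall strategy (Riccati ansatz, coefficient matching, then expectations) is sound and your final equations agree with the paper's, but the order in which you filter creates a genuine gap at the crucial identification step. You impose the ansatz on the \emph{filtered} adjoint, $\hat{\bar{p}}_i(t)=-P(t)\hat{\bar{z}}_i(t)-\Lambda(t)l(t)-\Phi(t)$, and apply It\^o's formula along the filter dynamics $d\hat{\bar{z}}_i$, which---as you yourself note---are driven by $dW_i$ alone. Consequently the It\^o expansion of your ansatz contains no $d\widetilde{W}_i$ term, and neither does the filtered backward equation: the martingale $\int\bar{\widetilde{k}}_i\,d\widetilde{W}_i$ has zero conditional expectation given $\mathcal{G}_t^i$ and simply disappears after filtering. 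There is therefore no ``companion channel'' left from which to read off $\hat{\bar{\widetilde{k}}}_i=-P(\widetilde{D}\bar{u}_i+\widetilde{\sigma})$; that identity cannot be extracted from filtered quantities at all. This is not a technicality: the term $\widetilde{D}^{\top}\hat{\bar{\widetilde{k}}}_i$ in the optimality condition \eqref{open-loop} is exactly what produces $\widetilde{D}^{\top}P\widetilde{D}$ in $\widetilde{R}$, so without it you cannot obtain \eqref{tildeR} or the feedback \eqref{scontrol}. (A smaller, repairable issue of the same kind: equating the $dW_i$-coefficient of the filtered dynamics with $\hat{\bar{k}}_i$ requires the lemma that, when conditioning on the filtration generated by $W_i$ itself, the integrand of the filtered martingale is the filter of the integrand and the drift filters pointwise; this does hold here, since $\bar{k}_i(s)$, $f(s)$ are $\mathcal{F}_s^i$-measurable and future increments of $W_i$ are independent of $\mathcal{F}_s^i$, but it needs to be stated and proved.)

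The paper avoids this by working in the opposite order: the ansatz is imposed on the \emph{unconditional} adjoint, $\bar{p}_i(t)=-P(t)\bar{z}_i(t)-\Lambda(t)\mathbb{E}[\bar{z}_i(t)]-\Phi(t)$ (see \eqref{pdecompose}), and It\^o's formula is applied with the full dynamics of $\bar{z}_i$, which retain both noise channels. Matching the $dW_i$- and $d\widetilde{W}_i$-coefficients against the backward equation of \eqref{sCC} identifies the actual processes $\bar{k}_i=-P(C\bar{z}_i+D\bar{u}_i+\sigma)$ and $\bar{\widetilde{k}}_i=-P(\widetilde{D}\bar{u}_i+\widetilde{\sigma})$ as in \eqref{krelation}; only \emph{then} does one take $\mathbb{E}[\,\cdot\,|\mathcal{G}_t^i]$, using that $\bar{u}_i$ is $\mathcal{G}_t^i$-adapted and that $P,\Lambda,\Phi,l$ are deterministic, to get the three filtered quantities entering \eqref{open-loop}. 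Everything downstream in your argument---the drift matching yielding \eqref{P}, \eqref{Lambda}, \eqref{Phi}, taking expectations for \eqref{l}, and substituting the feedback into the filter SDE for \eqref{z}---then goes through essentially as you describe. To repair your proof, replace the filtered ansatz by the unconditional one and perform the two-channel coefficient matching \emph{before} filtering; the rest of your argument can be kept.
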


\begin{proof}
Due to the coupling structure of MF-FBSDE \eqref{sCC}, we speculate that
\begin{equation}\label{pdecompose}
\bar{p}_i(t)=-P(t)\bar{z}_i(t)-\Lambda(t)\mathbb{E}[\bar{z}_i(t)]-\Phi(t),
\end{equation}
with $P(T)=G$, $\Lambda(T)=-G$ and $\Phi(T)=0$.  Here, $P(\cdot)$, $\Lambda(\cdot)$ and $\Phi(\cdot)$ satisfy deterministic equations, which will be specified later. Let us first show \eqref{scontrol}.
By applying It\^o's formula to $\bar{p}_i(\cdot)$, we can derive
\begin{equation}
\begin{aligned}\label{barp}
d\bar{p}_i(t)=&\{-(\dot{P}(t)+P(t)A(t))\bar{z}_i(t)-[\dot{\Lambda}(t)+\Lambda(t)(A(t)+\delta)+P(t)\delta] \mathbb{E}[\bar{z}_i(t)]\\
&-P(t)B(t)\bar{u}_i(t)-\Lambda(t)B(t)\mathbb{E}[\bar{u}_i(t)]-(P(t)+\Lambda(t))b(t)-\dot{\Phi}(t)\}dt\\
&-P(t)[C(t)\bar{z}_{i}(t) +D(t)\bar{u}_i(t)+\sigma (t)]dW_i(t)-P(t)[\widetilde{D}(t)\bar{u}_i(t)+\widetilde{\sigma }(t)]d\widetilde{W}_i(t).
\end{aligned}
\end{equation}
Comparing with the diffusion term in the second equation of \eqref{sCC}, we get
\begin{equation}
\begin{aligned}
\bar{k}_i(t)=&-P(t)[C(t)\bar{z}_{i}(t) +D(t)\bar{u}_i(t)+\sigma (t)],\\
\bar{\widetilde{k}}_i(t)=&-P(t)[\widetilde{D}(t)\bar{u}_i(t)+\widetilde{\sigma }(t)].\label{krelation}
\end{aligned}
\end{equation}
Taking the conditional expectation with respect to $\mathcal{G}_t^i$ on both side of \eqref{pdecompose} and \eqref{krelation}, and substituting them into \eqref{open-loop}, we have
\begin{equation}
\begin{aligned}\label{u}
\bar{u}_i(t)=&-\widetilde{R}(t)^{-1}\widetilde{P}^{\top}(t)\hat{\bar{z}}_i(t)-\widetilde{R}(t)^{-1}B^{\top}(t)\Lambda(t)\mathbb{E}[\bar{z}_i(t)]\\
&-\widetilde{R}(t)^{-1}(B^{\top}(t)\Phi(t)+D^{\top}(t)P(t)\sigma(t)+\widetilde{D}^{\top}(t)P(t)\widetilde{\sigma}(t)), ~~~1\leq i \leq N,
\end{aligned}
\end{equation}
then  \eqref{scontrol} holds by recalling that $l(\cdot)=\mathbb{E}[\bar{z}_i(\cdot)]$ (see \eqref{limit}). Moreover, we have
\begin{equation}
\begin{aligned}\label{Eu}
\mathbb{E}[\bar{u}_i(t)]=&-\widetilde{R}(t)^{-1}(\widetilde{P}^{\top}(t)+B^{\top}(t)\Lambda(t))\mathbb{E}[\bar{z}_i(t)]\\
&-\widetilde{R}(t)^{-1}(B^{\top}(t)\Phi(t)+D^{\top}(t)P(t)\sigma(t)+\widetilde{D}^{\top}(t)P(t)\widetilde{\sigma}(t)), ~~~1\leq i \leq N.
\end{aligned}
\end{equation}

Now, let us deduce the equations for $P(\cdot)$, $\Lambda(\cdot)$, $\Phi(\cdot)$ and $l(\cdot)$.
From the  drift term of \eqref{barp} and second equation in \eqref{sCC}, by noting \eqref{pdecompose} and \eqref{krelation}, one can obtain that
\begin{equation}
\begin{aligned}\label{drift}
&(\dot{P}(t)+P(t)A(t)+A^{\top}(t)P(t)+C^{\top}(t)P(t)C(t)+Q(t))\bar{z}_i(t)\\
&+[\dot{\Lambda}(t)+\Lambda(t)(A(t)+\delta)+P(t)\delta+A^{\top}(t)\Lambda(t)-Q(t)]\mathbb{E}[\bar{z}_i(t)]\\
&+(P(t)B(t)+C^{\top}(t)P(t)D(t))\bar{u}_i(t)+\Lambda(t)B(t)\mathbb{E}[\bar{u}_i(t)]\\
&+\dot{\Phi}(t)+P(t)b(t)+\Lambda(t)b(t)+A^{\top}(t)\Phi(t)+C^{\top}(t)P(t)\sigma(t)=0.
\end{aligned}
\end{equation}
By taking conditional expectation on \eqref{drift} and by virtue of \eqref{u}-\eqref{Eu}, we have
\begin{equation}
\begin{aligned}
&(\dot{P}(t)+P(t)A(t)+A^{\top}(t)P(t)+C^{\top}(t)P(t)C(t)+Q(t)-\widetilde{P}(t)\widetilde{R}(t)^{-1}\widetilde{P}^{\top}(t))\hat{\bar{z}}_i(t)\\
&+[\dot{\Lambda}(t)+\Lambda(t)(A(t)-B(t)\widetilde{R}(t)^{-1}\widetilde{P}^{\top}(t))+(A(t)-B(t)\widetilde{R}(t)^{-1}\widetilde{P}^{\top}(t))^{\top}\Lambda(t)\\
&+(P(t)+\Lambda(t))\delta-\Lambda(t)B(t)\widetilde{R}(t)^{-1}B^{\top}(t)\Lambda(t)-Q(t)]\mathbb{E}[\bar{z}_i(t)]\\
&+\dot{\Phi}(t)+(A^{\top}(t)-\widetilde{P}(t)\widetilde{R}(t)^{-1}B^{\top}(t)-\Lambda(t)B(t)\widetilde{R}(t)^{-1}B^{\top}(t))\Phi(t)\\
&+(C^{\top}(t)-\widetilde{P}(t)\widetilde{R}(t)^{-1}D^{\top}(t)-\Lambda(t)B(t)\widetilde{R}(t)^{-1}D^{\top}(t))P(t)\sigma(t)\\
&-(\widetilde{P}(t)\widetilde{R}(t)^{-1}\widetilde{D}^{\top}(t)+\Lambda(t)B(t)\widetilde{R}(t)^{-1}\widetilde{D}^{\top}(t))P(t)\widetilde{\sigma}(t)+(P(t)+\Lambda(t))b(t)=0,\\
\end{aligned}
\end{equation}
which suggests that $P(\cdot)$, $\Lambda(\cdot)$ and $\Phi(\cdot)$ solve \eqref{P}, \eqref{Lambda} and \eqref{Phi}, respectively. In addition, by taking expectation on both side of the first equation in \eqref{sCC} and  by noting \eqref{open-loop}, we have
\begin{equation}\label{Ez}
d\mathbb{E}[\bar{z}_i(t)]=\{(A(t)+\delta)\mathbb{E}[\bar{z}_i(t)]+B(t)\mathbb{E}[\bar{u}_i(t)]+b(t)\}dt,
\end{equation}
and by substituting \eqref{Eu} into \eqref{Ez} and by recalling $l(\cdot)=\mathbb{E}[\bar{z}_i(\cdot)]$, it is easy to show that $l(\cdot)$ solves \eqref{l}. Moreover, from \eqref{sCC} and \eqref{scontrol}, the optimal filtering $\hat{\bar{z}}_i(\cdot)$ can be expressed as in \eqref{z}. \hfill$\square$     \end{proof}

To summarize, we obtain that $(P(\cdot),\Lambda(\cdot),\Phi(\cdot),l(\cdot))$ solves the following Riccati type CC system
\begin{equation}
\left\{
\begin{aligned}\label{RCC}
&\dot{P}(t)+P(t)A(t)+A^{\top}(t)P(t)+C^{\top}(t)P(t)C(t)+Q(t)-\widetilde{P}(t)\widetilde{R}(t)^{-1}\widetilde{P}^{\top}(t)=0,\\
&\dot{\Lambda}(t)+\Lambda(t)(A(t)-B(t)\widetilde{R}(t)^{-1}\widetilde{P}^{\top}(t))+(A(t)-B(t)\widetilde{R}(t)^{-1}\widetilde{P}^{\top}(t))^{\top}\Lambda(t)\\
&\qquad+(P(t)+\Lambda(t))\delta -\Lambda(t)B(t)\widetilde{R}(t)^{-1}B^{\top}(t)\Lambda(t)-Q(t)=0,\\
&\dot{\Phi}(t)+(A^{\top}(t)-\widetilde{P}(t)\widetilde{R}(t)^{-1}B^{\top}(t)-\Lambda(t)B(t)\widetilde{R}(t)^{-1}B^{\top}(t))\Phi(t)+(C^{\top}(t)\\
&\qquad-\widetilde{P}(t)\widetilde{R}(t)^{-1}D^{\top}(t)-\Lambda(t)B(t)\widetilde{R}(t)^{-1}D^{\top}(t))P(t)\sigma(t)-(\widetilde{P}(t)\widetilde{R}(t)^{-1}\widetilde{D}^{\top}(t)\\
&\qquad+\Lambda(t)B(t)\widetilde{R}(t)^{-1}\widetilde{D}^{\top}(t))P(t)\widetilde{\sigma}(t)+(P(t)+\Lambda(t))b(t)=0,\\
&\dot{l}(t)-[A(t)+\delta -B(t)\widetilde{R}(t)^{-1}(\widetilde{P}^{\top}(t)+B^{\top}(t)\Lambda(t))]l(t)-b(t)\\
&\qquad+B(t)\widetilde{R}(t)^{-1}(B^{\top}(t)\Phi(t)+D^{\top}(t)P(t)\sigma(t)+\widetilde{D}^{\top}(t)P(t)\widetilde{\sigma}(t))=0,\\
&P(T)=~G,\Lambda(T)=-G,\Phi(T)=0,l(0)=~x,
\end{aligned}
\right.
\end{equation}
where we recall that $\widetilde{R}(t)=R(t)+D^{\top}(t)P(t)D(t)+\widetilde{D}^{\top}(t)P(t)\widetilde{D}(t)$ and
$\widetilde{P}(t)=P(t)B(t)+C^{\top}(t)P(t)D(t)$.
\smallskip

By applying Theorem \ref{generalnash}, we have
\begin{theorem}
Let \textup{(H1)}-\textup{(H3)} hold, the strategy profile $\bar{u}(\cdot)=(\bar{u}_1(\cdot),\ldots,\bar{u}_N(\cdot))$, where $\bar{u}_i(\cdot)$ is given by \eqref{scontrol} and $(P(\cdot),\Lambda(\cdot),\Phi(\cdot),l(\cdot),\hat{\bar{z}}_i(\cdot))$ solves systems \eqref{P}-\eqref{z}, is an $\varepsilon$-Nash equilibrium of Problem (LP) with $\Gamma=\mathbb{R}^m$.
\end{theorem}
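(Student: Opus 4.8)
The plan is to deduce this statement directly from Theorem \ref{generalnash} by recognizing the control unconstrained case as a degenerate instance of the constrained framework, and then to identify the two representations of the decentralized strategy through Theorem \ref{specialu}.

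First I would observe that when $\Gamma=\mathbb{R}^m$ the projection operator $\mathbf{P}_{\Gamma}$ reduces to the identity map on $\mathbb{R}^m$. Hence the constrained decentralized strategy \eqref{gcontrol} collapses to the open-loop form \eqref{open-loop}, and correspondingly the Hamiltonian type CC system \eqref{CC} collapses to the MF-FBSDE \eqref{sCC}. Under (H3) the coefficients satisfy $\widetilde{C}=H=\widetilde{H}=0$ and $F=\delta I$, which is merely a specialization of the data entering Theorem \ref{Hwellposedness}; thus the well-posedness of \eqref{sCC} is inherited as a special case, and the candidate strategy $\bar{u}_i(\cdot)$ defined by \eqref{open-loop} is well defined.

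Next I would invoke Theorem \ref{specialu}: under (H1)--(H3) the decoupling ansatz \eqref{pdecompose}, namely $\bar{p}_i=-P\bar{z}_i-\Lambda\mathbb{E}[\bar{z}_i]-\Phi$, transforms the open-loop expression \eqref{open-loop} into the feedback-of-filtered-state form \eqref{scontrol}, provided $(P(\cdot),\Lambda(\cdot),\Phi(\cdot),l(\cdot),\hat{\bar{z}}_i(\cdot))$ solve \eqref{P}--\eqref{z}. Consequently the strategy profile built from \eqref{scontrol} is literally the same profile as the one built from \eqref{gcontrol} in the case $\Gamma=\mathbb{R}^m$; it is only re-expressed in closed-loop form. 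The $\varepsilon$-Nash property is therefore a property of a single, already-identified strategy profile, written in two equivalent ways.

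Finally, applying Theorem \ref{generalnash} --- whose spectral and smallness conditions simplify under (H3), since there $\widetilde{C}=H=\widetilde{H}=0$ --- yields that $(\bar{u}_1(\cdot),\ldots,\bar{u}_N(\cdot))$ is an $\varepsilon$-Nash equilibrium of Problem (LP) with $\Gamma=\mathbb{R}^m$. The main obstacle is not the $\varepsilon$-Nash verification itself, which is inherited verbatim from Theorem \ref{generalnash}, but rather ensuring that the two representations genuinely coincide, that is, guaranteeing the solvability of the Riccati type CC system \eqref{RCC} (equivalently \eqref{P}--\eqref{l}) so that the ansatz \eqref{pdecompose} is admissible. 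That solvability, complicated by the extra term $\widetilde{D}^{\top}P\widetilde{D}$ in $\widetilde{R}$ and by the possible failure of $\delta P-Q\geq 0$, is the substantive point and is established separately, after which the present theorem follows immediately.
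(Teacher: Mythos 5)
Your overall strategy---identify \eqref{scontrol} with \eqref{gcontrol} via Theorem \ref{specialu} once $\Gamma=\mathbb{R}^m$ makes the projection trivial, then hand the $\varepsilon$-Nash verification to the constrained-case theory---is exactly how the paper disposes of this theorem (its entire proof is the phrase ``By applying Theorem \ref{generalnash}''). However, your execution has a genuine gap in how the hypotheses are handled. You assert that the well-posedness of \eqref{sCC} is ``inherited as a special case'' of Theorem \ref{Hwellposedness}, and that the ``spectral and smallness conditions'' of Theorem \ref{generalnash} ``simplify under (H3)''. Neither claim is valid: those two theorems require $4\lambda^{\ast}<-2|F|-6|C|^{2}-6|\widetilde{C}|^{2}-5|H|^{2}-5|\widetilde{H}|^{2}$ together with $|B|,|D|,|\widetilde{D}|,|R^{-1}|\in[0,\theta_1)$, and under (H3) these merely become $4\lambda^{\ast}<-2|F|-6|C|^{2}$ plus the same smallness restrictions---still genuine extra assumptions that are neither contained in nor implied by (H1)--(H3). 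The statement you are proving assumes only (H1)--(H3), so a proof that routes existence through Theorem \ref{Hwellposedness} and the equilibrium property through the statement of Theorem \ref{generalnash} proves something strictly weaker than claimed.

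What the statement actually exploits---and the reason the unconstrained case is worth a separate theorem---is that under (H3) the Riccati system \emph{replaces} the discounting method as the source of well-posedness. Given $(P(\cdot),\Lambda(\cdot),\Phi(\cdot),l(\cdot),\hat{\bar{z}}_i(\cdot))$ solving \eqref{P}--\eqref{z} (existence and uniqueness being Theorem \ref{RCCtheorem}, again under (H1)--(H3) only), one constructs a solution of \eqref{sCC} directly: define $\bar{u}_i$ by \eqref{scontrol}, let $\bar{z}_i$ solve the resulting closed-loop state equation, set $\bar{p}_i$ by the ansatz \eqref{pdecompose} and $(\bar{k}_i,\bar{\widetilde{k}}_i)$ by \eqref{krelation}, and verify \eqref{sCC} together with the consistency relations $l(t)=\mathbb{E}[\bar{z}_i(t)]$ and $\hat{\bar{z}}_i(t)=\mathbb{E}[\bar{z}_i(t)|\mathcal{G}_t^i]$. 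This yields a well-defined, square-integrable, i.i.d.\ family $\{\bar{u}_i\}$ satisfying the stationarity condition \eqref{open-loop}, hence optimal for Problem (LLP) by convexity of the cost ($Q\geq0$, $R\gg0$, $G\geq0$ with linear dynamics). The $\varepsilon$-Nash property then follows not from the statement of Theorem \ref{generalnash} but from its proof, namely Lemmas \ref{averageerror}--\ref{errorcost2}, whose hypotheses are only (H1)--(H2) plus well-definedness of the strategies. You do sense this at the end---you correctly single out the solvability of \eqref{RCC} as the substantive point---but in your write-up the Riccati system only serves to reconcile two representations, while existence is still drawn from a theorem whose hypotheses the present statement does not grant.
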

\begin{remark}
\textup{(i)} When $\Gamma=\mathbb{R}^m$, if we further assume that $\widetilde{D}=\widetilde{\sigma}=0$ and only one Brownian motion $W_i$ is involved, then it reduces to the control unconstrained large-population problem with full information, which serves as a special case of our problem. We emphasize that in this situation $\widetilde{R}(\cdot)=R(\cdot)+D^{\top}(\cdot)P(\cdot)D(\cdot)$ and thus the Riccati equation for $P(\cdot)$ in the full information case is standard, whose solvability can be guaranteed automatically by \cite{Yong1999}.  By contrast, we will see that the introduction of the partial information makes it very difficult to solve the corresponding Riccati equations, in particular for $P(\cdot)$. In fact, the $\widetilde{D}(\cdot)u_i(\cdot)$ term will result in
$\widetilde{R}(\cdot)=R(\cdot)+D^{\top}(\cdot)P(\cdot)D(\cdot)+\widetilde{D}^{\top}(t)P(\cdot)\widetilde{D}(\cdot)$, where the additional term $\widetilde{D}^{\top}(\cdot)P(\cdot)\widetilde{D}(\cdot)$ cannot be combined with the original term $D^{\top}(\cdot)P(\cdot)D(\cdot)$ into a quadratic form (see \eqref{r1}), thus the equation for $P(\cdot)$ is not a standard Riccati equation and then the existing results cannot be implemented.   Therefore, we will focus on the wellposedness of system \eqref{RCC} in subsection \ref{subsec:3}.

\textup{(ii)} If we suppose that $\bar{p}_i(\cdot)$ has the following decomposition $($which is different to \eqref{pdecompose}$)$
\begin{equation}
\bar{p}_i(t)=-P(t)\bar{z}_i(t)-\varphi(t), \label{prelation}
\end{equation}
with $P(T)=-G,\varphi(T)=G\widetilde{l}(T)$. Then by taking similar procedure in above proof, it can be verified that $P(\cdot)$ still solves \eqref{P}, and $\varphi(\cdot)$ solves the following ODE
\begin{equation}
\left\{
\begin{aligned}\label{vari}
&\dot{\varphi}(t)+(A(t)-B(t)\widetilde{R}(t)^{-1}\widetilde{P}^{\top}(t))^{\top}\varphi(t)+(C(t)-D(t)\widetilde{R}(t)^{-1}\widetilde{P}^{\top}(t))^{\top}P(t)\sigma(t)\\
&\qquad-\widetilde{P}(t)\widetilde{R}(t)^{-1}\widetilde{D}^{\top}(t)P(t)\widetilde{\sigma}(t)+P(t)(\delta \widetilde{l}(t)+b(t))-Q(t)\widetilde{l}(t)=0,\\
&\varphi(T)=~G\widetilde{l}(T).
\end{aligned}
\right.
\end{equation}
and $\widetilde{l}(\cdot)$ solves
\begin{equation}
\left\{
\begin{aligned}\label{newl}
d\widetilde{l}(t)=&\{(A(t)+\delta-B(t)\widetilde{R}(t)^{-1}\widetilde{P}^{\top}(t))\widetilde{l}(t)+b(t)-B(t)\widetilde{R}(t)^{-1}(B^{\top}(t)\varphi(t)\\
&+D^{\top}(t)P(t)\sigma(t)+\widetilde{D}^{\top}(t)P(t)\widetilde{\sigma}(t))\}dt,\\
\widetilde{l}(0)=&~x.
\end{aligned}
\right.
\end{equation}
In this case, for any $1\leq i \leq N$ the decentralized strategies can be represented as
\begin{equation}
\begin{aligned}
\bar{u}_i(t)=-\widetilde{R}(t)^{-1}\widetilde{P}^{\top}(t)\hat{\bar{z}}_i(t)
-\widetilde{R}(t)^{-1}(B^{\top}(t)\varphi(t)
+D^{\top}(t)P(t)\sigma(t)+\widetilde{D}^{\top}(t)P(t)\widetilde{\sigma}(t)).\end{aligned}
\end{equation}
According to above analysis, it's not surprising to discover that the system of \eqref{vari} and \eqref{newl} is a kind of coupled forward backward ODEs, whose well-posedness is not easy to check. For this reason, we introduce a new decomposition as \eqref{pdecompose} which allows us to solve $P(\cdot),\Lambda(\cdot),\Phi(\cdot)$ and $l(\cdot)$ one by one. We emphasize that by comparing \eqref{pdecompose} with \eqref{prelation}, one can check that $\varphi(\cdot)=\Lambda(\cdot)\widetilde{l}(\cdot)+\Phi(\cdot)$. Moreover, if the system \eqref{RCC} is uniquely solvable, we can also solve uniquely systems \eqref{vari} and \eqref{newl}.  The existence is obvious and we only mention the uniqueness here. In fact, let $(\varphi^\prime(\cdot),\widetilde{l}^\prime(\cdot))$ be an another solution to \eqref{vari} and \eqref{newl}. After simple calculation, we can verify that $\Phi(\cdot)=\varphi(\cdot)-\Lambda(\cdot)\widetilde{l}(\cdot)$ and $\Phi^\prime(\cdot)=\varphi^\prime(\cdot)-\Lambda(\cdot)\widetilde{l}^\prime(\cdot)$ both satisfy the third equation of system \eqref{RCC}.  Due to the uniqueness of solution to \eqref{RCC}, we have $\Phi(\cdot)=\Phi^\prime(\cdot)$. By substituting the relationship $\varphi(\cdot)=\Lambda(\cdot)\widetilde{l}(\cdot)+\Phi(\cdot)$ and $\varphi^\prime(\cdot)=\Lambda(\cdot)\widetilde{l}^\prime(\cdot)+\Phi(\cdot)$ into \eqref{newl}, we have that $\widetilde{l}(\cdot)$ and $\widetilde{l}^\prime(\cdot)$ satisfy the same ODE, which implies $\widetilde {l }(\cdot)=\widetilde {l}^\prime(\cdot)$ from the classical ODE theory and then further $\varphi(\cdot)=\varphi^\prime(\cdot)$.  Thus, we will focus on the well-posedness of system \eqref{RCC} in next subsection.
\end{remark}
\begin{remark}\label{remark for partial information structure}
Huang and Wang \cite{Huang2016} also considered  a class of unconstrained LQ large-population problems with partial information via decoupling methods. However, the diffusion coefficient of the dynamic of individual agent in \cite{Huang2016} is in a simple manner. Indeed,  their diffusion coefficient depends neither on  control nor state. In our work, we study a  general partial information stochastic large-population problem. Moveover, the well-posedness of a new Riccati type CC system \eqref{RCC} is also provided  in next subsection, which looks interesting itself.
\end{remark}

\subsection{Well-posedness of Riccati Type CC System}\label{subsec:3}
In this subsection, we are going to study the well-posedness of general Riccati type CC system \eqref{RCC} consisting of four equations (see \eqref{P}-\eqref{l}).

We emphasize that due to our general partial information structure, especially that the diffusion term of the state contains the term $\widetilde{D}(\cdot)u_i(\cdot)$, the equations for $P(\cdot)$ and $\Lambda(\cdot)$ are no longer standard Riccati equations and it will arise essential difficulties to get the well-posedness of system \eqref{RCC}. Indeed, firstly, it is obvious that we have
\begin{equation}\label{r1}
D^{\top}(t)P(t)D(t)+\widetilde{D}^{\top}(t)P(t)\widetilde{D}(t)\neq (D(t)+\widetilde{D}(t))^{\top}P(t)(D(t)+\widetilde{D}(t)),
\end{equation}
which means that equation \eqref{P} for $P(\cdot)$ is not a standard Riccati equation. Secondly, since usually the inequality $\delta P(\cdot)-Q(\cdot)\geq 0$ fails, the well-posedness of equation \eqref{Lambda} for $\Lambda(\cdot)$ is not obvious at all.

To study the well-posedness of system \eqref{RCC}, we first recall the following useful results, which are classical in algebra.  It is worth noting that the second assertion in the following lemma serves as a corollary of the Weyl's inequality.  For saving space, we omit the proof here.
\begin{lemma}\label{alegbra1}
Let $\mathbb{A},\mathbb{B}\in \mathcal{S}^n$, then the following results hold:\\
\textup{(i)} if $\mathbb{A}\geq\mathbb{B}>0$, then $\mathbb{B}^{-1}\geq\mathbb{A}^{-1}>0$.\\
\textup{(ii)} if $\mathbb{A}\geq \mathbb{B}$, then $\lambda_k(\mathbb{A})\geq\lambda_k(\mathbb{B})$, $k=1,\ldots,n$, where $\lambda_k(\mathbb{A})$, $(resp.~\lambda_k(\mathbb{B}))$ is $k$-th eigenvalue of $\mathbb{A}$ $(resp.~\mathbb{B})$, i.e. $\lambda_1(\mathbb{A})\ge\lambda_2(\mathbb{A})\ge\ldots\ge\lambda_n(\mathbb{A})$ and $\lambda_1(\mathbb{B})\ge\lambda_2(\mathbb{B})\ge\ldots\ge\lambda_n(\mathbb{B})$.
\end{lemma}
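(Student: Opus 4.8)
The plan is to treat the two assertions independently, in each case reducing the matrix inequality to an elementary statement about the (scalar) eigenvalues of a single symmetric matrix.

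For part \textup{(i)}, I would first observe that $\mathbb{A}\geq\mathbb{B}>0$ forces $\mathbb{A}>0$, so both $\mathbb{A}^{-1}$ and $\mathbb{B}^{-1}$ exist and are positive definite. The main device is a congruence transformation by the symmetric positive definite square root $\mathbb{B}^{1/2}$. Starting from $\mathbb{A}-\mathbb{B}\geq 0$ and conjugating by $\mathbb{B}^{-1/2}$ gives $M:=\mathbb{B}^{-1/2}\mathbb{A}\mathbb{B}^{-1/2}\geq I$. Since $M\in\mathcal{S}^n$ is positive definite, $M\geq I$ is equivalent to every eigenvalue of $M$ being $\geq 1$, hence every eigenvalue of $M^{-1}$ being $\leq 1$, i.e. $M^{-1}\leq I$. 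Unwinding the definition, $M^{-1}=\mathbb{B}^{1/2}\mathbb{A}^{-1}\mathbb{B}^{1/2}$, and a second congruence by $\mathbb{B}^{-1/2}$ converts $M^{-1}\leq I$ into $\mathbb{A}^{-1}\leq\mathbb{B}^{-1}$, which together with $\mathbb{A}^{-1}>0$ is the claim. The only point requiring care is that congruence by an invertible matrix preserves the semidefinite order, so that each passage between $M$, $M^{-1}$, and the original matrices is order preserving.

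For part \textup{(ii)}, I would invoke the Courant--Fischer min-max characterization, writing $\lambda_k(\mathbb{A})=\max_{\dim S=k}\ \min_{x\in S,\,|x|=1}\langle\mathbb{A}x,x\rangle$, with the analogous formula for $\mathbb{B}$. Since $\mathbb{A}\geq\mathbb{B}$ means $\langle\mathbb{A}x,x\rangle\geq\langle\mathbb{B}x,x\rangle$ for every $x$, the inner objective for $\mathbb{A}$ dominates that for $\mathbb{B}$ pointwise over the same family of subspaces, so the min-max value for $\mathbb{A}$ dominates that for $\mathbb{B}$, yielding $\lambda_k(\mathbb{A})\geq\lambda_k(\mathbb{B})$ for each $k$. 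Equivalently, as the statement suggests, I would derive it from Weyl's inequality by writing $\mathbb{A}=\mathbb{B}+(\mathbb{A}-\mathbb{B})$ with $\mathbb{A}-\mathbb{B}\geq 0$ and using the lower Weyl bound $\lambda_k(\mathbb{A})\geq\lambda_k(\mathbb{B})+\lambda_n(\mathbb{A}-\mathbb{B})$, the last term being nonnegative.

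Neither assertion presents a genuine obstacle, so I would expect the write-up to be short; this is why the statement is quoted as classical and its proof omitted. If anything, the only subtlety worth flagging is bookkeeping rather than mathematics: in part \textup{(i)} one must justify that the symmetric square root of a positive definite matrix exists and is itself positive definite and invertible, and in part \textup{(ii)} one must fix the decreasing ordering convention $\lambda_1(\cdot)\geq\cdots\geq\lambda_n(\cdot)$ consistently so that the correct index appears in the min-max and Weyl estimates.
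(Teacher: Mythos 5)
Your proof is correct. The paper itself omits the proof of this lemma, remarking only that assertion \textup{(ii)} is a corollary of Weyl's inequality --- which matches your second argument for \textup{(ii)} --- and your congruence argument for \textup{(i)} together with the Courant--Fischer alternative are the standard arguments that correctly fill in the omitted details.
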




Let us first focus on the well-posedness of equation \eqref{P}. As mentioned above, equation \eqref{P} is not a standard Riccati equation due to the additional term related to $\widetilde{D}$. In the following lemma,   we can show the uniqueness of a solution for equation \eqref{P} by Lemma \ref{alegbra1} and assumptions \textup{(H1)}-\textup{(H3)}. Moreover, we will use modified iterative method and mathematical induction (see the modified term $\widehat{Q}(\cdot)$ and $\Psi(\cdot)$ in the following proof) to obtain the existence of a solution for equation \eqref{P}.

\begin{lemma}\label{Plemma}
Let \textup{(H1)}-\textup{(H3)} hold, then Riccati equation \eqref{P} admits a unique solution $P(\cdot)\in C([0,T];\mathcal{S}_+^n)$.
\end{lemma}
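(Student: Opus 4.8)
The plan is to settle uniqueness by a routine Gronwall argument and to build existence through a monotone iteration that, at each step, replaces \eqref{P} by a \emph{standard} (single-diffusion) Riccati equation. First observe that whenever $P\in\mathcal{S}_+^n$ we have, by \textup{(H2)}, $\widetilde R=R+D^{\top}PD+\widetilde D^{\top}P\widetilde D\geq R\gg0$, so $\widetilde R^{-1}$ is well defined and uniformly bounded (Lemma \ref{alegbra1}\textup{(i)}), and together with $\widetilde P=PB+C^{\top}PD$ the map $P\mapsto\widetilde P\widetilde R^{-1}\widetilde P^{\top}$ is locally Lipschitz. Hence the right-hand side of \eqref{P} is locally Lipschitz in $P$. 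Two solutions in $C([0,T];\mathcal{S}_+^n)$ are bounded on the compact interval $[0,T]$, so their difference satisfies a linear Gronwall estimate on the set where both live, forcing them to coincide; this gives uniqueness.

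For existence the obstruction is exactly that $\widetilde D^{\top}P\widetilde D$ cannot be merged into the standard quadratic form (see \eqref{r1}), so the classical theory does not apply to \eqref{P} directly. I would therefore \emph{freeze} this term and iterate. Let $P^{(0)}$ be the solution of the standard Riccati equation obtained from \eqref{P} by discarding $\widetilde D^{\top}P\widetilde D$ in $\widetilde R$ (i.e.\ with control weight $R$); given $P^{(k)}\in C([0,T];\mathcal{S}_+^n)$, set $\Psi^{(k)}=\widetilde D^{\top}P^{(k)}\widetilde D\geq0$ and let $P^{(k+1)}$ solve
\begin{equation*}
\dot P^{(k+1)}+P^{(k+1)}A+A^{\top}P^{(k+1)}+C^{\top}P^{(k+1)}C+Q-\widetilde P^{(k+1)}\bigl(R+\Psi^{(k)}+D^{\top}P^{(k+1)}D\bigr)^{-1}\bigl(\widetilde P^{(k+1)}\bigr)^{\top}=0,
\end{equation*}
with $\widetilde P^{(k+1)}=P^{(k+1)}B+C^{\top}P^{(k+1)}D$ and $P^{(k+1)}(T)=G$. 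Since $R+\Psi^{(k)}\gg0$, $Q\geq0$ and $G\geq0$, each iterate is a standard single-diffusion stochastic Riccati equation whose unique solution $P^{(k+1)}\in C([0,T];\mathcal{S}_+^n)$ is supplied by the classical theory (cf.\ \cite{Yong1999}).

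The heart of the argument is to show the iteration is monotone and uniformly bounded. Introduce $P^{\infty}$ solving the linear Lyapunov equation $\dot P^{\infty}+P^{\infty}A+A^{\top}P^{\infty}+C^{\top}P^{\infty}C+Q=0$, $P^{\infty}(T)=G$, which is the cost coefficient of the uncontrolled system and lies in $C([0,T];\mathcal{S}_+^n)$. Reading $P^{(k+1)}$ as the value-function coefficient of the LQ problem with control weight $R+\Psi^{(k)}$, enlarging this weight can only raise the value; since $P^{(k)}\leq P^{(k+1)}$ implies $\Psi^{(k)}\leq\Psi^{(k+1)}$ by congruence, and $\Psi^{(0)}\geq0$ starts the order, induction yields $0\leq P^{(0)}\leq P^{(1)}\leq\cdots$ (using Lemma \ref{alegbra1} for the inverse monotonicity in the comparison). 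Choosing $u\equiv0$ in each LQ problem shows the value never exceeds the uncontrolled cost, giving the uniform bound $P^{(k)}\leq P^{\infty}$ for all $k$; this is what prevents the frozen terms $\Psi^{(k)}=\widetilde D^{\top}P^{(k)}\widetilde D$ from blowing up. Thus $\{P^{(k)}(t)\}$ is nondecreasing and bounded in $\mathcal{S}^n$, hence converges pointwise to some $P$ with $0\leq P\leq P^{\infty}$.

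Finally I would upgrade this to a genuine solution. The uniform bound $0\leq P^{(k)}\leq P^{\infty}$ together with $R+\Psi^{(k)}\geq R\gg0$ keeps all relevant inverses uniformly bounded, so the right-hand sides of the iterates, and hence $\{\dot P^{(k)}\}$, are uniformly bounded; then $\{P^{(k)}\}$ is equicontinuous and, by Arzel\`a--Ascoli combined with monotonicity, converges uniformly to a continuous limit $P$. Passing to the limit in the integral form of the iterated equations—using continuity of matrix inversion on $\{\,\cdot\geq R\,\}$ and dominated convergence—shows $P$ solves \eqref{P} and belongs to $C([0,T];\mathcal{S}_+^n)$. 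I expect the main obstacle to be the control of the iteration, specifically establishing the monotonicity and the uniform upper bound $P^{(k)}\leq P^{\infty}$, since it is precisely this bound that guarantees convergence despite the nonstandard term $\widetilde D^{\top}P\widetilde D$.
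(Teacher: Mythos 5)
Your proof is correct, but your existence argument takes a genuinely different route from the paper's. The paper freezes the \emph{entire} feedback gain $\Psi_i=\widetilde R_i^{-1}\widetilde P_i^{\top}$ at each step, so that every iterate \eqref{Pie} is a \emph{linear} Lyapunov equation (only Lemma 7.3 of Chapter 6 in \cite{Yong1999} is needed); its sequence starts from the uncontrolled-cost solution $P_0$ of \eqref{P0} and \emph{decreases}, with an algebraic computation (completion of squares in $\Upsilon_i$) giving monotonicity and explicit Cauchy estimates $|\Delta_i(t)|\leq \frac{K^i}{(i-1)!}(T-t)^{i-1}|v_1(0)|$ that yield uniform convergence of both $P_i$ and $\dot P_i$. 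You instead freeze only the problematic term $\widetilde D^{\top}P\widetilde D$, so each iterate is a full \emph{nonlinear} standard Riccati equation whose solvability you outsource to the classical theory (Theorem 7.2 of Chapter 6 in \cite{Yong1999}, control-dependent diffusion, time-varying weight $R+\Psi^{(k)}\gg0$); your sequence \emph{increases} from the weight-$R$ Riccati solution, your monotonicity is control-theoretic (comparison of LQ value functions in the control weight, via the verification/completion-of-squares representation) rather than algebraic, and your upper bound $P^{\infty}$ — amusingly, exactly the paper's starting point $P_0$ — plus equicontinuity and limit passage in the integral equation closes the argument. What each buys: the paper's scheme is more elementary and self-contained (only linear equations plus matrix algebra, with a quantitative convergence rate and direct convergence of derivatives), while yours is conceptually cleaner in isolating the new term, at the price of invoking the full classical Riccati theory and the value-function interpretation as black boxes; the two schemes approach the same solution from opposite sides. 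Your uniqueness argument (boundedness of $\widetilde R^{-1}$ via Lemma \ref{alegbra1} plus Gronwall) is essentially identical to the paper's.
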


\begin{proof}
Firstly, we prove \eqref{P} admits at most one solution $P(\cdot)\in C([0,T];\mathcal{S}_+^n)$. Suppose that $P_1(\cdot)$ and $P_2(\cdot)$ are two solutions of \eqref{P}, we denote $\widehat{P}(\cdot)=P_1(\cdot)-P_2(\cdot)$, then we have

\begin{equation*}
\left\{
\begin{aligned}
&\dot{\widehat{P}}(t)+\widehat{P}(t)A(t)+A^{\top}(t)\widehat{P}(t)+C^{\top}(t)\widehat{P}(t)C(t)\\
&\qquad-(\widehat{P}(t)B(t)+C^{\top}(t)\widehat{P}(t)D(t))R_1(t)^{-1}(P_1(t)B(t)+C^{\top}(t)P_1(t)D(t))^{\top}\\
&\qquad-(P_2(t)B(t)+C^{\top}(t)P_2(t)D(t))R_2(t)^{-1}(\widehat{P}(t)B(t)+C^{\top}(t)\widehat{P}(t)D(t))^\top\\
&\qquad+(P_2(t)B(t)+C^{\top}(t)P_2(t)D(t))R_2(t)^{-1}(D^{\top}(t)\widehat{P}(t)D(t)\\
&\qquad+\widetilde{D}^{\top}(t)\widehat{P}(t)\widetilde{D}(t))R_1(t)^{-1}(P_1(t)B(t)+C^{\top}(t)P_1(t)D(t))^{\top}=0,\\
&\widehat{P}(T)=0,
\end{aligned}
\right.
\end{equation*}
where $R_i(t)=R(t)+D^{\top}(t)P_i(t)D(t)+\widetilde{D}^{\top}(t)P_i(t)\widetilde{D}(t)$, $i=1,2$.
From Lemma \ref{alegbra1}, we have $R_i(t)^{-1}\leq R(t)^{-1}$, $\forall t\in [0,T]$, $i=1,2$, and
\begin{equation*}
|R_i(t)^{-1}|=\sqrt{\underset{k=1}{\overset{m}{\sum}}\lambda_k^2(R_i(t)^{-1})}
\leq \sqrt{\underset{k=1}{\overset{m}{\sum}}\lambda_k^2(R(t)^{-1})}=|R(t)^{-1}|<\infty.
\end{equation*}
where the last inequality is due to $R\gg0$ (which implies $R^{-1}\in L^{\infty }(0,T;\mathcal{S}^{m})$). Consequently, $|R_1(t)^{-1}|$ and $|R_2(t)^{-1}|$ are uniformly bounded. From Gronwall's inequality, we have $\widehat{P}(t)=0$, which yields the uniqueness of $P(\cdot)$.

Secondly, let us focus on the existence of a solution to equation \eqref{P}. Motivated by \cite{Yong1999}, we set
\begin{equation*}
\left\{
\begin{aligned}
&\widehat{A}(t)=A(t)-B(t)\Psi(t),\widehat{C}(t)=C(t)-D(t)\Psi(t),\\
&\widehat{Q}(t)=Q(t)+\Psi^{\top}(t)(R(t)+\widetilde{D}^{\top}(t)P(t)\widetilde{D}(t))\Psi(t),\\
&\Psi(t)=(R(t)\!+\!D^{\top}(t)P(t)D(t)\!+\!\widetilde{D}^{\top}(t)P(t)\widetilde{D}(t))^{-1}
(P(t)B(t)\!+\!C^\top(t)P(t)D(t))^\top.\\
\end{aligned}
\right.
\end{equation*}
It is easy to verify that equation \eqref{P} is equivalent to the following equation
\begin{equation}
\left\{
\begin{aligned}\label{Pe}
&\dot{P}(t)+P(t)\widehat{A}(t)+\widehat{A}^\top(t)P(t)+\widehat{C}^\top(t)P(t)\widehat{C}(t)+\widehat{Q}(t)=0,\\
&P(T)=G.
\end{aligned}
\right.
\end{equation}
Now, we will use modified iterative method and mathematical induction to prove the existence of a solution to \eqref{Pe}, thus \eqref{P} also has a solution. To do this, we set
\begin{equation}
\left\{
\begin{aligned}\label{P0}
&\dot{P}_0(t)+P_0(t)A(t)+A^\top(t)P_0(t)+C^\top(t)P_0(t)C(t)+Q(t)=0,\\
&P_0(T)=G,
\end{aligned}
\right.
\end{equation}
which admits a unique $P_0(\cdot)\in C([0,T];\mathcal{S}_+^n)$ by Lemma 7.3 of Chapter 6 in \cite{Yong1999}.
For $i\ge0$, we define
\begin{equation}
\left\{
\begin{aligned}\label{iterative}
&\Psi_i(t)=(R(t)+D^{\top}(t)P_i(t)D(t)+\widetilde{D}^{\top}(t)P_i(t)\widetilde{D}(t))^{-1}
(P_i(t)B(t)\\&\qquad\qquad+C^\top(t)P_i(t)D(t))^\top,\\
&\widehat{A}_i(t)=A(t)-B(t)\Psi_i(t),\quad \widehat{C}_i(t)=C(t)-D(t)\Psi_i(t),\\
&\widehat{Q}_i(t)=Q(t)+\Psi_i^{\top}(t)(R(t)+\widetilde{D}^{\top}(t)P_i(t)\widetilde{D}(t))\Psi_i(t),\\
\end{aligned}
\right.
\end{equation}
and let $P_{i+1}(t)$ be defined by the following equation
\begin{equation}
\left\{
\begin{aligned}\label{Pie}
&\dot{P}_{i+1}(t)+P_{i+1}(t)\widehat{A}_i(t)+\widehat{A}_i(t)P_{i+1}(t)+\widehat{C}_i^\top(t)P_{i+1}(t)\widehat{C}_i(t)+\widehat{Q}_i(t)=0,\\
&P_{i+1}(T)=G.
\end{aligned}
\right.
\end{equation}
Noticing that $R\gg0$, $Q\ge0$ and $G\geq0$, by using Lemma 7.3 of Chapter 6 in \cite{Yong1999} and mathematical induction, one can check that for $i\ge 0$, $P_i(\cdot)$ is well defined and moreover $P_i(\cdot)\in C([0,T];\mathcal{S}_+^n)$.
We claim that $P_i(\cdot)$, for $i\ge0$, is a decreasing sequence in $C([0,T];\mathcal{S}_+^n)$. For simplicity, we set $\Psi_{-1}(t)=0$ and denote $\Delta_i(t)=P_i(t)-P_{i+1}(t)$ and $\Upsilon_i(t)=\Psi_{i-1}(t)-\Psi_i(t)$. Indeed, when $i=0$, by \eqref{P0}-\eqref{Pie}, we have
\begin{equation}
\begin{aligned}
&-[\dot{\Delta}_0(t)+\Delta_0(t)\widehat{A}_0(t)+\widehat{A}_0^\top(t)\Delta_0(t)+\widehat{C}_0^\top(t)\Delta_0(t)\widehat{C}_0(t)]\\
=&P_0(t)(A(t)-\widehat{A}_0(t))+(A(t)-\widehat{A}_0(t))^\top P_0(t)+C^\top(t)P_0(t)C(t)\\
&-\widehat{C}_0^\top(t)P_0(t)\widehat{C}_0(t)+Q(t)-\widehat{Q}_0(t)\\
=&\Upsilon^\top_0(t)(R(t)+D^\top(t)P_0(t)D(t)+\widetilde{D}^\top(t)P_0(t)\widetilde{D}(t))\Upsilon_0(t)\\
&-[P_0(t)B(t)+\widehat{C}_0^\top(t)P_0(t)D (t)+\Upsilon^\top_0(t)(R(t)+\widetilde{D}^\top(t)P_0(t)\widetilde{D}(t))]\Upsilon_0(t)\\
&-\Upsilon^\top_0(t)[B^\top(t)P_0(t)+D^\top(t)P_0(t)\widehat{C}_0(t)+(R(t)+\widetilde{D}^\top(t)P_0(t)\widetilde{D}(t))\Upsilon_0(t)]\\
=&\Upsilon^\top_0(t)(R(t)+D^\top(t)P_0(t)D(t)+\widetilde{D}^\top(t)P_0(t)\widetilde{D}(t))\Upsilon_0(t)\geq 0.
\end{aligned}
\end{equation}
Using $\Delta_0(T)=0$ and Lemma 7.3 of \cite{Yong1999}, we get $P_0(t)\geq P_1(t)$, for all $t\in [0,T]$. For $i\ge1$, if assume that $P_{i-1}(t)\geq P_i(t)$, $t\in[0,T]$, it is sufficient to prove $P_{i}(t)\geq P_{i+1}(t)$, $t\in[0,T]$.
By using \eqref{Pie}, we have that $\Delta_i(t)$ satisfies
\begin{equation}
\begin{aligned}\label{deltak}
-\dot{\Delta}_i(t)=&\Delta_i(t)\widehat{A}_i(t)+\widehat{A}_i^\top(t)\Delta_i(t)+\widehat{C}_i^\top(t)\Delta_i(t)\widehat{C}_i(t)
+P_i(t)(\widehat{A}_{i-1}(t)-\widehat{A}_i(t))\\
&+(\widehat{A}_{i-1}(t)-\widehat{A}_i(t))^\top P_i(t)+\widehat{C}_{i-1}^\top(t)P_i(t)\widehat{C}_{i-1}(t)\\
&-\widehat{C}_i^\top(t)P_i(t)\widehat{C}_i(t)+\widehat{Q}_{i-1}(t)-\widehat{Q}_i(t).
\end{aligned}
\end{equation}
According to \eqref{iterative}, we have
\begin{equation*}
\begin{aligned}
&\widehat{A}_{i-1}(t)-\widehat{A}_i(t)=-B(t)\Upsilon_i(t),~~\widehat{C}_{i-1}(t)-\widehat{C}_{i}(t)=-D(t)\Upsilon_i(t),\\
&\widehat{C}_{i-1}^\top(t)P_i(t)\widehat{C}_{i-1}(t)-\widehat{C}_i^\top(t)P_i(t)\widehat{C}_i(t)=\Upsilon_i^\top(t)D^\top(t)P_i(t)D(t)\Upsilon_i(t)\\
&-\widehat{C}_i^\top(t)P_i(t)D(t)\Upsilon_i(t)-\Upsilon_i^\top(t)D^\top(t)P_i(t)\widehat{C}_i(t),\\
&\widehat{Q}_{i-1}(t)-\widehat{Q}_i(t)=\Upsilon_i^\top(t)(R(t)+\widetilde{D}^\top(t)P_i(t)\widetilde{D}(t))\Upsilon_i(t)\\
&+\Psi_i^\top(t)(R(t)+\widetilde{D}^\top(t)P_i(t)\widetilde{D}(t))\Upsilon_i(t)+\Upsilon_i^\top(t)(R(t)+\widetilde{D}^\top(t)P_i(t)\widetilde{D}(t))\Psi_i(t)\\
&+\Psi_{i-1}^\top(t)\widetilde{D}^\top(t)(P_{i-1}(t)-P_i(t))\widetilde{D}(t)\Psi_{i-1}(t).
\end{aligned}
\end{equation*}
From \eqref{deltak} and above estimates as well as $P_{i-1}(t)\geq P_i(t)$, we obtain
\begin{equation}
\begin{aligned}\label{di}
&-[\dot{\Delta}_i(t)+\Delta_i(t)\widehat{A}_i(t)+\widehat{A}_i^\top(t)\Delta_i(t)+\widehat{C}_i^\top(t)\Delta_i(t)\widehat{C}_i(t)]
=-P_i(t)B(t)\Upsilon_i(t)\\
&-\Upsilon_i^\top(t)B^\top(t)P_i(t)+\Upsilon_i^\top(t)D^\top(t)P_i(t)D(t)\Upsilon_i(t)-\widehat{C}_i^\top(t)P_i(t)D(t)\Upsilon_i(t)\\
&-\Upsilon_i^\top(t)D^\top(t)P_i(t)\widehat{C}_i(t)+\Upsilon_i^\top(t)(R(t)+\widetilde{D}^\top(t)P_i(t)\widetilde{D}(t))\Upsilon_i(t)\\
&+\Psi_i^\top(t)(R(t)+\widetilde{D}^\top(t)P_i(t)\widetilde{D}(t))\Upsilon_i(t)+\Upsilon_i^\top(t)(R(t)+\widetilde{D}^\top(t)P_i(t)\widetilde{D}(t))\Psi_i(t)\\
&+\Psi_{i-1}^\top(t)\widetilde{D}^\top(t)(P_{i-1}(t)-P_i(t))\widetilde{D}(t)\Psi_{i-1}(t)\\
=&\Upsilon_i^\top(t)(R(t)+D^\top(t)P_i(t)D(t)+\widetilde{D}^\top(t)P_i(t)\widetilde{D}(t))\Upsilon_i(t)\\
&+\Psi_{i-1}^\top(t)\widetilde{D}^\top(t)(P_{i-1}(t)-P_i(t))\widetilde{D}(t)\Psi_{i-1}(t)\geq 0.
\end{aligned}
\end{equation}
Using $\Delta_i(T)=0$ and Lemma 7.3 of \cite{Yong1999} again, we have  $P_{i}(t)\geq P_{i+1}(t)$, $t\in[0,T]$. Therefore, $\{P_i(\cdot)\}$ is a decreasing sequence in $C([0,T];\mathcal{S}_+^n)$, and thus has a limit denoted by $P(\cdot)$.  To show that $P(\cdot)$ solves \eqref{Pe} (and hence \eqref{P}), it remains to prove that $\{P_i(\cdot)\}$ is a Cauchy sequence in $C([0,T];\mathcal{S}_+^n)$  and $\{\dot{P}_i(\cdot)\}$ is a Cauchy sequence in $C([0,T];\mathcal{S}^n)$, which also provides that $P(\cdot)\in C([0,T];\mathcal{S}_+^n)$ and $\dot{P}(\cdot)\in C([0,T];\mathcal{S}^n)$.

In fact, let $\widetilde{R}_{i}(t)=R(t)+D^{\top}(t)P_i(t)D(t)+\widetilde{D}^{\top}(t)P_i(t)\widetilde{D}(t)$, one can get
\begin{equation}
\begin{aligned}\label{Upsilon}
&\Upsilon_i(t)=\Psi_{i-1}(t)-\Psi_i(t)\\
=&\widetilde{R}_{i-1}(t)^{-1}(B^\top(t)\Delta_{i-1}(t)+D^\top(t)\Delta_{i-1}(t)C(t))-\widetilde{R}_{i-1}(t)^{-1}(D^\top(t)\Delta_{i-1}(t)D(t)\\
&+\widetilde{D}^\top(t)
\Delta_{i-1}(t)\widetilde{D}(t))\widetilde{R}_{i}(t)^{-1}(B^\top(t)P_{i}(t)+D^\top(t)P_{i}(t)C(t)).
\end{aligned}
\end{equation}
Noting $\Delta_i(T)=0$ and integrating on both side of \eqref{di}, we get
\begin{equation}
\begin{aligned}\label{Delta}
\Delta_i(t)=&\int_t^T [\Delta_i(s)\widehat{A}_i(s)+\widehat{A}_i^\top(s)\Delta_i(s)+\widehat{C}_i^\top(s)\Delta_i(s)\widehat{C}_i(s)\\
&+\Upsilon_i^\top(s)\widetilde{R}_{i}(s)\Upsilon_i(s)-\Psi_{i-1}^\top(s)\widetilde{D}^\top(s)\Delta_{i-1}(s)\widetilde{D}(s)\Psi_{i-1}(s)]ds.
\end{aligned}
\end{equation}
Substituting \eqref{Upsilon} into \eqref{Delta},  and by noticing assumptions \textup{(H1)}-\textup{(H3)} and the uniformly boundedness of $|P_i(\cdot)|$, $|\widetilde{R}_{i}(\cdot)|$, $|\widetilde{R}_{i}(\cdot)^{-1}|$, we obtain
\begin{equation*}
|\Delta_i(t)|\leq K\int_t^T [|\Delta_{i-1}(s)|+|\Delta_i(s)|]ds.
\end{equation*}
Using Gronwall's inequality, we get $|\Delta_i(t)|\leq K\int_t^T |\Delta_{i-1}(s)|ds$. By iteration, we deduce
\begin{equation}\label{Deltaiestimate}
|\Delta_i(t)|\leq \frac{K^{i}}{(i-1)!}(T-t)^{i-1}|v_1(0)|, \text { \quad where } v_1(0)=\int_0^T |\Delta_{0}(s)|ds.
\end{equation}
For any $m>i\ge1$, we have
\begin{equation*}
\begin{aligned}
|P_i(t)-P_m(t)|
\leq|\Delta_i(t)|+|\Delta_{i+1}(t)|+\ldots|\Delta_{m-1}(t)|\leq \underset{j=i}{\overset{m}{\sum}} \frac{K^{j}}{(j-1)!}(T-t)^{j-1}|v_1(0)|,
\end{aligned}
\end{equation*}
and thus $\underset{0\leq t \leq T}{\sup}|P_i(t)-P_m(t)|\leq \underset{j=i}{\overset{m}{\sum}} \frac{K^{j}}{(j-1)!}T^{j-1}|v_1(0)|$. Hence, $\{P_i(\cdot)\}$ is a Cauchy sequence in $C([0,T];\mathcal{S}_+^n)$.  Moreover, from \eqref{Delta},  we have
\begin{equation}
\begin{aligned}
-\dot{\Delta}_i(t)=& \Delta_i(t)\widehat{A}_i(t)+\widehat{A}_i^\top(t)\Delta_i(t)+\widehat{C}_i^\top(t)\Delta_i(t)
\widehat{C}_i(t)\\
&+\Upsilon_i^\top(t)\widetilde{R}_{i}(t)\Upsilon_i(t)-\Psi_{i-1}^\top(t)\widetilde{D}^\top(t)
\Delta_{i-1}(t)\widetilde{D}(t)\Psi_{i-1}(t),
\end{aligned}
\end{equation}
and by using \eqref{Upsilon}, assumptions \textup{(H1)}-\textup{(H3)} and uniformly boundedness of $|P_i(\cdot)|$, $|\widetilde{R}_{i}(\cdot)|$, $|\widetilde{R}_{i}(\cdot)^{-1}|$, we have
$|\dot{\Delta}_i(t)|\leq K(|\Delta_i(t)|+|\Delta_{i-1}(t)|).
$
Then from \eqref{Deltaiestimate} and similar arguments as above, we can obtain
$\{\dot{P}_i(\cdot)\}$ is a Cauchy sequence in $C([0,T];\mathcal{S}^n)$. The proof is complete.    \hfill$\square$
\end{proof}

Now, we are going to give the well-posedness of system \eqref{RCC}. As discussed at the beginning of this subsection, equation \eqref{Lambda} for $\Lambda(\cdot)$ is not a standard Riccati equation too, since $\delta P(\cdot)-Q(\cdot)\geq 0$ fails usually. Inspired by Yong \cite{Yong2013}, we transform the solvability of $\Lambda(\cdot)$ to the solvability of another Riccati equation, whose well-posedness can be guaranteed through some algebraic inequalities. In fact, we have the following theorem, which is the main result of this subsection.
\begin{theorem}\label{RCCtheorem}
Let assumptions \textup{(H1)}-\textup{(H3)} hold, then Riccati type CC system \eqref{RCC} admits a unique solution $(P(\cdot),\Lambda(\cdot),\Phi(\cdot),l(\cdot))\in $ $C([0,T];\mathcal{S}_+^n\times \mathcal{S}^n\times \mathbb{R}^n\times\mathbb{R}^n)$.
\end{theorem}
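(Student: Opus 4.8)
The plan is to exploit the triangular (sequential) structure of system \eqref{RCC}: the equation for $P(\cdot)$ is self-contained, the equation for $\Lambda(\cdot)$ involves only $P(\cdot)$, the equation for $\Phi(\cdot)$ involves $P(\cdot)$ and $\Lambda(\cdot)$, and finally $l(\cdot)$ is driven by $P(\cdot),\Lambda(\cdot),\Phi(\cdot)$. Hence I would solve the four equations one after another. The solvability of $P(\cdot)\in C([0,T];\mathcal{S}_+^n)$ is already furnished by Lemma \ref{Plemma}; since $R\gg 0$ and $P\geq 0$, the matrix $\widetilde{R}=R+D^{\top}PD+\widetilde{D}^{\top}P\widetilde{D}\geq R\gg 0$ is uniformly invertible, so that $\widetilde{R}^{-1}$, $\widetilde{P}$ and $\mathcal{B}:=B\widetilde{R}^{-1}B^{\top}$ are all bounded and continuous on $[0,T]$.

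The hard part is equation \eqref{Lambda} for $\Lambda(\cdot)$: because the inhomogeneous term $\delta P-Q$ is in general indefinite and the terminal value $-G$ is nonpositive, \eqref{Lambda} is not a standard Riccati equation and global solvability is not automatic. Following the idea of Yong \cite{Yong2013}, I would introduce the auxiliary unknown $\Pi(\cdot):=P(\cdot)+\Lambda(\cdot)$, whose terminal value is $\Pi(T)=G+(-G)=0$. Adding \eqref{P} and \eqref{Lambda} and substituting $\Lambda=\Pi-P$, a direct computation (in which the terms $\widetilde{P}\widetilde{R}^{-1}\widetilde{P}^{\top}$, $C^{\top}PD\widetilde{R}^{-1}\widetilde{P}^{\top}$ and $\widetilde{P}\widetilde{R}^{-1}D^{\top}PC$ cancel after using $PB=\widetilde{P}-C^{\top}PD$) shows that $\Pi(\cdot)$ satisfies the Riccati equation
\begin{equation*}
\dot{\Pi}+\Pi\mathcal{A}+\mathcal{A}^{\top}\Pi-\Pi\mathcal{B}\Pi+C^{\top}\big(P-PD\widetilde{R}^{-1}D^{\top}P\big)C=0,\qquad \Pi(T)=0,
\end{equation*}
where $\mathcal{A}:=A-B\widetilde{R}^{-1}\widetilde{P}^{\top}+\tfrac{\delta}{2}I+\mathcal{B}P$. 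This is now in the standard control form: the quadratic coefficient $\mathcal{B}=B\widetilde{R}^{-1}B^{\top}\geq 0$, the terminal value is $0\geq 0$, and it remains only to check that the zeroth-order coefficient is positive semidefinite.

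The decisive point is therefore the algebraic inequality $P-PD\widetilde{R}^{-1}D^{\top}P\geq 0$. I would prove it by a Schur-complement argument: writing $\widetilde{R}=D^{\top}PD+(R+\widetilde{D}^{\top}P\widetilde{D})$ gives
\begin{equation*}
\begin{pmatrix}P & PD\\ D^{\top}P & \widetilde{R}\end{pmatrix}
=\begin{pmatrix}I\\ D^{\top}\end{pmatrix}P\begin{pmatrix}I & D\end{pmatrix}
+\begin{pmatrix}0 & 0\\ 0 & R+\widetilde{D}^{\top}P\widetilde{D}\end{pmatrix},
\end{equation*}
both summands being positive semidefinite because $P\geq 0$ and $R\gg 0$; since $\widetilde{R}>0$, the Schur complement $P-PD\widetilde{R}^{-1}D^{\top}P$ of this block matrix is positive semidefinite. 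With this in hand the transformed equation for $\Pi(\cdot)$ is a standard symmetric Riccati equation with bounded coefficients, so by the classical theory (e.g. Lemma 7.3 of Chapter 6 in \cite{Yong1999}) it admits a unique solution $\Pi(\cdot)\in C([0,T];\mathcal{S}_+^n)$, whence $\Lambda(\cdot)=\Pi(\cdot)-P(\cdot)\in C([0,T];\mathcal{S}^n)$ is uniquely determined.

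Finally, once $P(\cdot)$ and $\Lambda(\cdot)$ are fixed, all coefficients in \eqref{Phi} and \eqref{l} are continuous and uniformly bounded, so \eqref{Phi} is a linear terminal-value ODE and \eqref{l} a linear initial-value ODE, each with a unique solution $\Phi(\cdot),l(\cdot)\in C([0,T];\mathbb{R}^n)$ by elementary linear ODE theory. Uniqueness for the whole system \eqref{RCC} then follows stage by stage from the uniqueness of $P(\cdot)$ (Lemma \ref{Plemma}), of $\Pi(\cdot)$, and of the two linear ODEs. The only genuinely delicate step is the treatment of \eqref{Lambda}: choosing the correct auxiliary variable $\Pi=P+\Lambda$ so that the indefinite term collapses, and verifying the positivity inequality that renders the transformed equation standard; everything else is either already established or routine.
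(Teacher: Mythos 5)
Your proof is correct, and its skeleton is the same as the paper's: $P(\cdot)$ from Lemma \ref{Plemma}, the substitution $\Pi=P+\Lambda$ with $\Pi(T)=0$ (your drift $\mathcal{A}=A-B\widetilde{R}^{-1}\widetilde{P}^{\top}+\tfrac{\delta}{2}I+\mathcal{B}P$ simplifies, via $\widetilde{P}^{\top}=B^{\top}P+D^{\top}PC$, to $A-B\widetilde{R}^{-1}D^{\top}PC+\tfrac{\delta}{2}I$, so your transformed equation is exactly the paper's \eqref{Pi}), then classical Riccati theory for $\Pi$, and linear ODE theory for $\Phi$ and $l$, with uniqueness propagating stage by stage. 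Where you genuinely depart from the paper is in proving the decisive inequality $P-PD\widetilde{R}^{-1}D^{\top}P\geq 0$ (inequality \eqref{positive}), which is precisely the step the paper highlights as nontrivial. The paper factors through $P^{\frac{1}{2}}$: setting $\Sigma=P^{\frac{1}{2}}DR^{-\frac{1}{2}}$ and $\widetilde{\Sigma}=P^{\frac{1}{2}}\widetilde{D}R^{-\frac{1}{2}}$, it writes the left-hand side as $P^{\frac{1}{2}}[I-\Sigma(I+\Sigma^{\top}\Sigma+\widetilde{\Sigma}^{\top}\widetilde{\Sigma})^{-1}\Sigma^{\top}]P^{\frac{1}{2}}$, discards $\widetilde{\Sigma}^{\top}\widetilde{\Sigma}$ by the inverse-monotonicity of Lemma \ref{alegbra1}, and then uses the identity $I-\Sigma(I+\Sigma^{\top}\Sigma)^{-1}\Sigma^{\top}=(I+\Sigma\Sigma^{\top})^{-1}>0$. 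You instead observe that
\begin{equation*}
\begin{pmatrix}P & PD\\ D^{\top}P & \widetilde{R}\end{pmatrix}
=\begin{pmatrix}I\\ D^{\top}\end{pmatrix}P\begin{pmatrix}I & D\end{pmatrix}
+\begin{pmatrix}0 & 0\\ 0 & R+\widetilde{D}^{\top}P\widetilde{D}\end{pmatrix}\geq 0
\end{equation*}
and invoke the Schur-complement criterion, valid because $\widetilde{R}\geq R\gg 0$ is invertible. Both arguments are sound. Yours is shorter and purely structural: it needs neither $P^{\frac{1}{2}}$ nor Lemma \ref{alegbra1}, and it makes transparent that the $\widetilde{D}$-term only helps (it sits inside the positive block $R+\widetilde{D}^{\top}P\widetilde{D}$). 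The paper's computation costs more algebra but yields the explicit lower bound $P-PD\widetilde{R}^{-1}D^{\top}P\geq P^{\frac{1}{2}}(I+P^{\frac{1}{2}}DR^{-1}D^{\top}P^{\frac{1}{2}})^{-1}P^{\frac{1}{2}}$, of which only semidefiniteness is actually used. Either way, the transformed equation for $\Pi$ has positive semidefinite state weight, nonnegative terminal datum, and $\widetilde{R}>0$, so the classical solvability result (Lemma 7.3 of Chapter 6 in \cite{Yong1999}) applies and the theorem follows.
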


\begin{proof}
From Lemma \ref{Plemma}, we know that equation \eqref{P} admits a unique solution in $C([0,T];\mathcal{S}_+^n)$. Now let us study the well-posedness of $\Lambda(\cdot)$. Motivated by \cite{Yong2013}, we set $\Pi(\cdot):=P(\cdot)+\Lambda(\cdot)$, then from  system  \eqref{RCC}, we know that $\Pi(\cdot)$ solves the following equation
\begin{equation}
\left\{
\begin{aligned}\label{Pi}
&\dot{\Pi}(t)+\Pi(t)(A(t)-B(t)\widetilde{R}(t)^{-1}D^{\top}(t)P(t)C(t))+(A(t)\\
&\qquad-B(t)\widetilde{R}(t)^{-1}D^{\top}(t)P(t)C(t))^{\top}\Pi(t)+\delta\Pi(t)+C^{\top}(t)(P(t)\\
&\qquad-P(t)D(t)\widetilde{R}(t)^{-1}D^{\top}(t)P(t))C(t)-\Pi(t)B(t)\widetilde{R}(t)^{-1}B^{\top}(t)\Pi(t)=0,\\
&\Pi(T)=0.
\end{aligned}
\right.
\end{equation}
By recalling that
$\widetilde{R}(t)=R(t)+D^{\top}(t)P(t)D(t)+\widetilde{D}^{\top}(t)P(t)\widetilde{D}(t)$, one can get
\begin{equation*}
\begin{aligned}
&P(t)-P(t)D(t)\widetilde{R}(t)^{-1}D^{\top}(t)P(t)\\
=&P(t)-P(t)D(t)(R(t)+D^{\top}(t)P(t)D(t)+\widetilde{D}^{\top}(t)P(t)\widetilde{D}(t))^{-1}D^{\top}(t)P(t)\\
=&P(t)^{\frac{1}{2}}[I-P(t)^{\frac{1}{2}}D(t)R(t)^{-\frac{1}{2}}(I+R(t)^{-\frac{1}{2}}D^{\top}(t)P(t)^{\frac{1}{2}}P(t)^{\frac{1}{2}}D(t)R(t)^{-\frac{1}{2}}\\
&+R(t)^{-\frac{1}{2}}\widetilde{D}^{\top}(t)P(t)^{\frac{1}{2}}P(t)^{\frac{1}{2}}\widetilde{D}(t)R(t)^{-\frac{1}{2}})^{-1}R(t)^{-\frac{1}{2}}D^{\top}(t)P(t)^{\frac{1}{2}}]P(t)^{\frac{1}{2}}\\
=&P(t)^{\frac{1}{2}}[I-\Sigma(t)(I+\Sigma^{\top}(t)\Sigma(t)+\widetilde{\Sigma}^{\top}(t)\widetilde{\Sigma}(t))^{-1}\Sigma^{\top}(t)]P(t)^{\frac{1}{2}},
\end{aligned}
\end{equation*}
where
$
\Sigma(t)=P(t)^{\frac{1}{2}}D(t)R(t)^{-\frac{1}{2}}$ and $\widetilde{\Sigma}(t)=P(t)^{\frac{1}{2}}\widetilde{D}(t)R(t)^{-\frac{1}{2}}.$
From Lemma \ref{alegbra1}, we have
\[
(I+\Sigma^{\top}(t)\Sigma(t)+\widetilde{\Sigma}^{\top}(t)\widetilde{\Sigma}(t))^{-1}\leq (I+\Sigma^{\top}(t)\Sigma(t))^{-1},
\]
and further
\begin{equation}
\begin{aligned}\label{inequality}
I-\Sigma(t)(I+\Sigma^{\top}(t)\Sigma(t)+\widetilde{\Sigma}^{\top}(t)\widetilde{\Sigma}(t))^{-1}
\Sigma^{\top}(t)
\geq I-\Sigma(t)(I+\Sigma^{\top}(t)\Sigma(t))^{-1}\Sigma^{\top}(t).
\end{aligned}
\end{equation}
Noting that $I-\Sigma(t)(I+\Sigma^{\top}(t)\Sigma(t))^{-1}\Sigma^{\top}(t)=(I+\Sigma(t)\Sigma^{\top}(t))^{-1}$,
we have
\begin{equation*}
\begin{aligned}
&P(t)-P(t)D(t)\widetilde{R}(t)^{-1}D^{\top}(t)P(t)\geq P(t)^{\frac{1}{2}}(I+\Sigma(t)\Sigma^{\top}(t))^{-1}P(t)^{\frac{1}{2}}\\
=&P(t)^{\frac{1}{2}}(I+P(t)^{\frac{1}{2}}D(t)R(t)^{-1}D^{\top}(t)P(t)^{\frac{1}{2}})^{-1}P(t)^{\frac{1}{2}}\geq 0.
\end{aligned}
\end{equation*}
Therefore, the following inequality holds
\begin{equation}\label{positive}
C^{\top}(t)(P(t)-P(t)D(t)\widetilde{R}(t)^{-1}D^{\top}(t)P(t))C(t)\geq 0
\end{equation}
and noticing that $R(t)+D^{\top}(t)P(t)D(t)+\widetilde{D}^{\top}(t)P(t)\widetilde{D}(t)>0$, we obtain that equation \eqref{Pi} admits a unique solution $\Pi(\cdot)\in C([0,T];\mathcal{S}_+^n)$. By recalling  $\Pi(\cdot)=P(\cdot)+\Lambda(\cdot)$ and Lemma \ref{Plemma}, we get the well-posedness of equation \eqref{Lambda}.

Once $P(\cdot)$ and $\Lambda(\cdot)$ are uniquely solved, the well-posedness of $\Phi(\cdot)$ holds by noting that the equation for $\Phi(\cdot)$ is just an ODE. Then similarly, the well-posedness of $l(\cdot)$ holds also.     \hfill$\square$
\end{proof}

\begin{remark}\label{Riccatiemph}
In this subsection, the well-posedness of a new Riccati type CC system \eqref{RCC}  is given, which generalizes the results of \cite{Huang2016,Yong2013}. To overcome the difficulties arisen from our general partial information structure, we introduce modified iterative method and equivalent transform method motivated by \cite{Yong1999,Yong2013}. We mention that our Riccati equations are quite different to the ones in \cite{Huang2016,Yong2013} due to the additional term  $\widetilde{D}^{\top}(\cdot)P(\cdot)\widetilde{D}(\cdot)$. However, it is interesting that we can use some algebraic inequalities as well as modified iterative method and equivalent transform method to obtain the well-posedness of our new Riccati type CC system.
\end{remark}

\section{Applications}\label{sec:app}
In this section, we apply our theoretical results to solve the Example \ref{example}. Let $\mathbb{R}_+$ be the set of all positive real number. For any real number $x$, we denote $x^+=\max\{x,0 \}$. Let admissible control set be $\mathcal{U}_{ad}^{c}=\{u_{i}(\cdot )~|~u_{i}(\cdot )\in L_{\mathcal{G}_{t}}^{2}(0,T;\Gamma)\}$, where $\Gamma\subseteq\mathbb{R}$. Then the general inter-bank borrowing and lending problem can be formulated as follows.

\textbf{Problem (IBL)} For $1\leq i \leq N$, to find strategy profile $\bar{u}(\cdot)=(\bar{u}_1(\cdot),\ldots,\bar{u}_N(\cdot))$, where $\bar{u}_i(\cdot)\in\mathcal{U}_{ad}^{c}$, such that
$
\mathcal{J}_i(\bar{u}_i(\cdot),\bar{u}_{-i}(\cdot))=\underset{%
u_{i}\left( \cdot \right) \in \mathcal{U}_{ad}^{c}}{\inf }\mathcal{J}_i(u_i(\cdot),\bar{u}_{-i}(\cdot))$, subjects to \eqref{exstate} and \eqref{excost}.

\begin{remark}
Noting that the cost functional in \cite{Carmona2015} has the cross term between $u_i(\cdot)$ and $(x_i(\cdot)-x^{(N)}(\cdot))$. We claim that our following conclusions can be extended similarly to such case with no essential difficulties.
\end{remark}
When $\Gamma=\mathbb{R}_+$, the decentralized strategies are given by
\begin{equation}\label{exampleu}
\bar{u}_{i}(t)=\Big\{\frac{B\mathbb{E}[\bar{p}%
_{i}(t)|\mathcal{G}_{t}^{i}]+D\mathbb{E}[\bar{k}_{i}(t)|\mathcal{G%
}_{t}^{i}]+\widetilde{D}\mathbb{E}[\bar{\widetilde{k}}_{i}(t)|%
\mathcal{G}_{t}^{i}]}{r}\Big\}^+,
\end{equation}%
where $(\bar{z}_{i}(\cdot),\bar{p}_{i}(\cdot),\bar{k}_{i}(\cdot),\bar{\widetilde{k}}_{i}(\cdot))$ solves the following  MF-FBSDE
\begin{equation}
\left\{
\begin{aligned}\label{exaHCC}
d\bar{z}_{i}(t)=&\{(A-a)\bar{z}_{i}(t)+B
[r^{-1}(B\mathbb{E}[\bar{p}_{i}(t)|\mathcal{G}%
_{t}^{i}]+D\mathbb{E}[\bar{k}_{i}(t)|\mathcal{G}_{t}^{i}] \\
&+\widetilde{D}\mathbb{E}[\bar{\widetilde{k}}_{i}(t)|\mathcal{G}%
_{t}^{i}])]^++a\mathbb{E}[\bar{z}_{i}(t)]+b\}dt\\
&+\{C\bar{z}_{i}(t) +D[r^{-1}(B\mathbb{E}[\bar{p}_{i}(t)|%
\mathcal{G}_{t}^{i}]+D\mathbb{E}[\bar{k}_{i}(t)|\mathcal{G}%
_{t}^{i}]\\
&+\widetilde{D}\mathbb{E}[\bar{\widetilde{k}}_{i}(t)|\mathcal{G}%
_{t}^{i}])]^++\sigma \}dW_{i}(t)\\
&+\{\widetilde{D}[r^{-1}(B\mathbb{E}[\bar{p%
}_{i}(t)|\mathcal{G}_{t}^{i}]+D\mathbb{E}[\bar{k}_{i} (t)|%
\mathcal{G}_{t}^{i}]\\
&+\widetilde{D}\mathbb{E}[\bar{\widetilde{k}}_{i}(t)|\mathcal{G}%
_{t}^{i}])]^++\widetilde{\sigma }\}d\widetilde{W}%
_{i}(t), \\
d\bar{p}_{i}(t)=&-[(A-a)\bar{p}_{i}(t)+C\bar{k}_{i} \left(
t\right)
-\epsilon(\bar{z}_{i}\left( t\right) -\mathbb{E}[\bar{z}_{i}(t)])]dt\\
&+\bar{k}_{i}\left( t\right) dW_{i}(t)+\bar{\widetilde{k}}_{i}
\left( t\right) d\widetilde{W}_{i}(t), \\
\bar{z}_{i}(0)=&x,~\bar{p}_{i}(T)=-c(\bar{z}_{i}\left( T\right) -\mathbb{E}[\bar{z}_{i}(T)]).
\end{aligned}
\right.
\end{equation}%
From Theorems \ref{Hwellposedness} and \ref{generalnash}, the following results hold.
\begin{theorem}
Assume that $2A<a-3C^2$, there exists a constant $\theta_1>0$ independent of $T$, which may depend on $A$, $a$, $C$, $\epsilon$ and $c$, when $B$, $D$, $\widetilde{D}$ and $r^{-1}\in[0,\theta_1)$, then there exists a unique adapted solution $(\bar{z}_i(\cdot),\bar{p}_i(\cdot),\bar{k}_i(\cdot),\bar{\widetilde{k}}_i(\cdot))\in L_{\mathcal{F
}_{t}^{W,\widetilde{W}}}^{2}(0,T;\mathbb{R}\times \mathbb{R}\times \mathbb{R}\times \mathbb{R})$ to MF-FBSDE \eqref{exaHCC}. Moreover, the strategy profile  $\bar{u}(\cdot)=(\bar{u}_1(\cdot),\ldots,\bar{u}_N(\cdot))$, where $\bar{u}_i(\cdot)$ is given by \eqref{exampleu}, is an $\varepsilon$-Nash equilibrium of Problem (IBL).
\end{theorem}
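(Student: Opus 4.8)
The plan is to recognize Problem (IBL) as a concrete scalar instance of the general Problem (LP) and then invoke Theorems \ref{Hwellposedness} and \ref{generalnash} directly. First I would match the coefficients of the state dynamics \eqref{exstate} against the general state equation \eqref{state}, which gives the time-independent, scalar identification $A(\cdot)=A-a$, $B(\cdot)=B$, $F(\cdot)=a$, $b(\cdot)=b$, $C(\cdot)=C$, $D(\cdot)=D$, $H(\cdot)=0$, $\sigma(\cdot)=\sigma$, $\widetilde{C}(\cdot)=0$, $\widetilde{D}(\cdot)=\widetilde{D}$, $\widetilde{H}(\cdot)=0$, $\widetilde{\sigma}(\cdot)=\widetilde{\sigma}$. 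Matching the cost \eqref{excost} against \eqref{cost} yields $Q(\cdot)=\epsilon$, $R(\cdot)=r$ and $G=c$. Since all of these coefficients are bounded constants and $\epsilon,c\geq0$, $r>0$, assumptions \textup{(H1)} and \textup{(H2)} hold automatically. With this identification one checks that the Hamiltonian-type MF-FBSDE \eqref{CC} specializes exactly to \eqref{exaHCC}, and that the decentralized strategy \eqref{gcontrol} becomes the scalar expression \eqref{exampleu}, because when $\Gamma=\mathbb{R}_+$ and $R=r$ is scalar the projection $\mathbf{P}_{\Gamma}$ under the norm $\|\cdot\|_R$ reduces to the positive-part operator $\{\cdot\}^{+}$.

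The only point requiring a short computation is that the abstract smallness hypothesis of Theorem \ref{Hwellposedness} collapses to the stated scalar condition. In the scalar case $\tfrac{A+A^{\top}}{2}=A-a$, so the maximum eigenvalue is $\lambda^{\ast}=A-a$; moreover $|F|=a$, $|C|^{2}=C^{2}$ and $|\widetilde{C}|=|H|=|\widetilde{H}|=0$. Substituting into the requirement $4\lambda^{\ast}<-2|F|-6|C|^{2}-6|\widetilde{C}|^{2}-5|H|^{2}-5|\widetilde{H}|^{2}$ gives $4(A-a)<-2a-6C^{2}$, i.e. $2A<a-3C^{2}$, which is precisely the hypothesis of the theorem. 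Consequently the hypotheses of Theorem \ref{Hwellposedness} are met, and there is a constant $\theta_1>0$, independent of $T$ and depending only on $A$, $a$, $C$, $\epsilon$, $c$ (through $\lambda^{\ast}$, $|C|$, $|Q|=\epsilon$, $|G|=c$), such that whenever $B,D,\widetilde{D}$ and $r^{-1}=|R^{-1}|$ lie in $[0,\theta_1)$ the MF-FBSDE \eqref{exaHCC} admits a unique adapted solution in $L_{\mathcal{F}_{t}^{W,\widetilde{W}}}^{2}(0,T;\mathbb{R}\times\mathbb{R}\times\mathbb{R}\times\mathbb{R})$.

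Finally I would invoke Theorem \ref{generalnash} verbatim: under the same coefficient identification and the same condition $2A<a-3C^{2}$, the strategy profile $\bar{u}(\cdot)=(\bar{u}_1(\cdot),\ldots,\bar{u}_N(\cdot))$ with $\bar{u}_i(\cdot)$ defined by \eqref{exampleu} is an $\varepsilon$-Nash equilibrium of Problem (IBL). I do not anticipate any genuine obstacle, since Problem (IBL) is by construction a special case of Problem (LP), and both the well-posedness of the consistency-condition MF-FBSDE and the $\varepsilon$-Nash conclusion are inherited directly from the general theory; the single item worth recording explicitly is the reduction of the eigenvalue condition $4\lambda^{\ast}<-2|F|-6|C|^{2}-\cdots$ to the transparent scalar form $2A<a-3C^{2}$ carried out above.
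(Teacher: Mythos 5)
Your proposal is correct and follows exactly the route the paper takes: the paper states this theorem as an immediate consequence of Theorems \ref{Hwellposedness} and \ref{generalnash}, with the coefficient identification $A(\cdot)=A-a$, $F(\cdot)=a$, $H=\widetilde{H}=\widetilde{C}=0$, $Q=\epsilon$, $R=r$, $G=c$, under which \eqref{CC} becomes \eqref{exaHCC} and \eqref{gcontrol} becomes \eqref{exampleu}. Your explicit reduction of the eigenvalue condition $4\lambda^{\ast}<-2|F|-6|C|^{2}-6|\widetilde{C}|^{2}-5|H|^{2}-5|\widetilde{H}|^{2}$ to $2A<a-3C^{2}$ (using $\lambda^{\ast}=A-a$ and $|F|=a$ since $a\geq 0$) is precisely the computation the paper leaves implicit, so your write-up is if anything more complete than the paper's.
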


If $\Gamma=\mathbb{R}$, we have $\mathcal{U}_{ad}^{c}=\{u_{i}(\cdot )~|~u_{i}(\cdot )\in L_{\mathcal{G}_{t}}^{2}(0,T;\mathbb{R})\}$. By applying Theorem \ref{specialu}, we can represent the decentralized strategies as the following feedback of filtered state
\begin{equation}\label{exafeed}
\bar{u}_i(t)=-\frac{(P(t)B+CP(t)D)\hat{\bar{z}}_i(t)+B\Lambda(t) l(t)+B\Phi(t)+DP(t)\sigma+DP(t)\widetilde{\sigma}}{r+D^2P(t)+\widetilde{D}^2P(t)},
\end{equation}
where $P(\cdot),\Lambda(\cdot),\Phi(\cdot),l(\cdot)$ solve, respectively, the following equations
\begin{equation}
\label{exaP}
\dot{P}(t)+2(A-a)P(t)+C^2P(t)+\epsilon-\frac{(P(t)B+CP(t)D)^2}{r+D^2P(t)+\widetilde{D}^2P(t)}
=0,\qquad
P(T)=c,
\end{equation}
and
\begin{equation}\label{exaL}
\begin{aligned}
&\dot{\Lambda}(t)+2\left(A-a-\frac{B(P(t)B+CP(t)D)}{r+D^2P(t)+\widetilde{D}^2P(t)}\right)\Lambda(t)\\
&\qquad+(P(t)+\Lambda(t))a -\frac{\Lambda(t)^2B^2}{r+D^2P(t)+\widetilde{D}^2P(t)}-\epsilon=0,\qquad\Lambda(T)=-c,
\end{aligned}
\end{equation}
and
\begin{equation}\label{exaPhi}
\begin{aligned}
&\dot{\Phi}(t)+\left(A-a-\frac{B(P(t)B+CP(t)D)}{r+D^2P(t)+\widetilde{D}^2P(t)}-\frac{\Lambda(t)B^2}{r+D^2P(t)+\widetilde{D}^2P(t)}\right)\Phi(t)\\
&+\Bigg(C-\frac{D(P(t)B+CP(t)D)}{r+D^2P(t)+\widetilde{D}^2P(t)}-\frac{\Lambda(t)BD}{r+D^2P(t)+\widetilde{D}^2P(t)}\Bigg)P(t)\sigma\\
&-\Bigg(\frac{\widetilde{D}(P(t)B+CP(t)D)}{r+D^2P(t)+\widetilde{D}^2P(t)}+\frac{\Lambda(t)B\widetilde{D}}{r+D^2P(t)+\widetilde{D}^2P(t)}\Bigg)P(t)\widetilde{\sigma}\\
&+(P(t)+\Lambda(t))b=0, \qquad
\Phi(T)=0,
\end{aligned}
\end{equation}
and
\begin{equation}
\begin{aligned}\label{exal}
dl(t)=&\Bigg\{\left[A-\frac{B(P(t)B+CP(t)D+B\Lambda(t))}{r+D^2P(t)+\widetilde{D}^2P(t)}\right]l(t)+b\\
&-\frac{B(B\Phi(t)+DP(t)\sigma+\widetilde{D}P(t)\widetilde{\sigma})}{r+D^2P(t)+\widetilde{D}^2P(t)}
\Bigg\}dt,\qquad\qquad\qquad
l(0)=x.
\end{aligned}
\end{equation}
The optimal filtering $\hat{\bar{z}}_i(\cdot)$ is determined by
\begin{equation}
\left\{
\begin{aligned}\label{exaz}
d\hat{\bar{z}}_i(t)=&~\Bigg\{\left(A-a-\frac{B(P(t)B+CP(t)D)}{r+D^2P(t)+\widetilde{D}^2P(t)}\right)\hat{\bar{z}}_i(t)-\frac{B^2\Lambda(t)}{r+D^2P(t)+\widetilde{D}^2P(t)}l(t)\\
&-\frac{B(B\Phi(t)+DP(t)\sigma+\widetilde{D}P(t)\widetilde{\sigma})}{r+D^2P(t)+\widetilde{D}^2P(t)}+b+al(t)\Bigg\}dt\\
&+\Bigg\{\left(C-\frac{D(P(t)B+CP(t)D)}{r+D^2P(t)+\widetilde{D}^2P(t)}\right)\hat{\bar{z}}_i(t)-\frac{DB\Lambda(t)}{r+D^2P(t)+\widetilde{D}^2P(t)}l(t)\\
&-\frac{D(B\Phi(t)+DP(t)\sigma+\widetilde{D}P(t)\widetilde{\sigma})}{r+D^2P(t)+\widetilde{D}^2P(t)}+\sigma\Bigg\}dW_i(t),\\
\hat{\bar{z}}_i(0)=&~x.
\end{aligned}
\right.
\end{equation}
Moreover, we have
\begin{theorem}
The strategy profile $\bar{u}(\cdot)=(\bar{u}_1(\cdot),\ldots,\bar{u}_N(\cdot))$, where $\bar{u}_i(\cdot)$ is given by \eqref{exafeed} and $(P(\cdot),\Lambda(\cdot),\Phi(\cdot),l(\cdot),\hat{\bar{z}}_i(\cdot))$ solves systems \eqref{exaP}-\eqref{exaz}, is an $\varepsilon$-Nash equilibrium of Problem (IBL) with $\Gamma=\mathbb{R}$.
\end{theorem}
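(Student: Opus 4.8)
The plan is to recognize Problem (IBL) with $\Gamma=\mathbb{R}$ as the scalar instance ($n=m=1$) of the unconstrained Problem (LP) and then invoke the general machinery assembled in Subsection \ref{subsec:2}. First I would match the coefficients of \eqref{exstate}--\eqref{excost} against those of the general model \eqref{state}--\eqref{cost}: the drift is recovered by taking the general $A$ to be $A-a$, the general $B$ to be $B$, the general $F$ to be $a$, and $b$ to be $b$; the $W_i$-diffusion by $C,D,H=0,\sigma$; the $\widetilde{W}_i$-diffusion by $\widetilde{C}=0,\widetilde{D},\widetilde{H}=0,\widetilde{\sigma}$; and the cost weights by $Q=\epsilon$, $R=r$, $G=c$. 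Under this identification Problem (IBL) with $\Gamma=\mathbb{R}$ is exactly Problem (LP) with $\Gamma=\mathbb{R}^m$ in dimension one.

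Next I would verify \textup{(H1)}--\textup{(H3)} for this choice. Since every coefficient in the example is a constant, \textup{(H1)} holds trivially; the assumptions $\epsilon\geq0$, $r>0$, $c\geq0$ give $Q\geq0$, $R\gg0$, $G\geq0$, so \textup{(H2)} holds; and crucially $F=a\,I$ with $\delta=a$ while $H=\widetilde{H}=\widetilde{C}=0$, so \textup{(H3)} holds with $\delta=a$. With \textup{(H1)}--\textup{(H3)} in force, Theorem \ref{specialu} yields the feedback representation \eqref{scontrol}, and Theorem \ref{RCCtheorem} guarantees that the associated Riccati type CC system admits a unique solution $(P(\cdot),\Lambda(\cdot),\Phi(\cdot),l(\cdot))$ with $P(\cdot)\in C([0,T];\mathcal{S}_+^n)$. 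Specializing \eqref{P}--\eqref{l}, the filtering equation \eqref{z}, and \eqref{scontrol} to scalars — where $\widetilde{R}(t)=r+D^2P(t)+\widetilde{D}^2P(t)$ and $\widetilde{P}(t)=PB+CPD$ — reproduces exactly \eqref{exaP}--\eqref{exaz} together with the feedback \eqref{exafeed}, so $\bar{u}_i(\cdot)$ is well defined.

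Finally I would appeal to the $\varepsilon$-Nash theorem for the unconstrained case established in Subsection \ref{subsec:2} (the result stated immediately after system \eqref{RCC}), which asserts that, under \textup{(H1)}--\textup{(H3)}, the profile built from \eqref{scontrol} together with the unique solution of \eqref{P}--\eqref{z} is an $\varepsilon$-Nash equilibrium of Problem (LP) with $\Gamma=\mathbb{R}^m$. Since IBL with $\Gamma=\mathbb{R}$ is precisely that instance, the conclusion follows at once.

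The step requiring the most care — rather than genuine difficulty — is the scalar bookkeeping: one must confirm that the reduction of the Riccati system and filtering equation coincides with \eqref{exaP}--\eqref{exaz}, and in particular that the additional term $\widetilde{D}^2P(t)$ persists in the denominator $\widetilde{R}(t)$. I would also emphasize the contrast with the constrained case $\Gamma=\mathbb{R}_+$: there the $\varepsilon$-Nash conclusion rested on the well-posedness of the MF-FBSDE \eqref{exaHCC} via Theorem \ref{Hwellposedness}, which forced the smallness hypotheses $2A<a-3C^2$ and $|B|,|D|,|\widetilde{D}|,r^{-1}\in[0,\theta_1)$; here, by contrast, well-posedness is supplied by the Riccati system through Theorem \ref{RCCtheorem}, which needs only \textup{(H1)}--\textup{(H3)}, so no smallness assumptions are imposed on the coefficients.
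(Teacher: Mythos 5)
Your proposal is correct and follows exactly the paper's (implicit) route: the paper obtains this theorem by specializing the general unconstrained machinery of Subsection 3.2 --- Theorem \ref{specialu} for the feedback form, Theorem \ref{RCCtheorem} for well-posedness of the Riccati type CC system, and the $\varepsilon$-Nash theorem stated after \eqref{RCC} --- to the scalar coefficients $A\to A-a$, $F=a$ (so $\delta=a$), $H=\widetilde H=\widetilde C=0$, $Q=\epsilon$, $R=r$, $G=c$, which is precisely your coefficient identification and verification of \textup{(H1)}--\textup{(H3)}. Your closing remark contrasting this with the smallness hypotheses needed in the constrained case $\Gamma=\mathbb{R}_+$ is also consistent with the paper.
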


Finally, for comparing our results with the results of  \cite{Carmona2015}, and also for further clarifications about the financial implication, we consider some special cases and present corresponding numerical results.

To do this, let $C=D=\widetilde{D}=0$, then we can derive a similar result as \cite{Carmona2015}. Indeed, in this setting, we have $P(t)=-\Lambda(t)$, $\Phi(t)\equiv0$. Moreover, equation \eqref{exaP} has the following explicit solution
\begin{equation*}\label{Psolution}
P(t)=\frac{r}{B^2}\frac{A-a+\theta+[-(A-a)+\theta]\frac{B^2c/r-(A-a+\theta)}{B^2c/r-(A-a)+\theta}e^{2\theta (t-T)}}{1-\frac{B^2c/r-(A-a+\theta)}{B^2c/r-(A-a)+\theta}e^{2\theta (t-T)}},
\end{equation*}
where $\theta=\sqrt{(A-a)^2+\frac{B^2\epsilon}{r}}$. At this moment, the decentralized strategies of Problem (IBL) read as
\begin{equation*}
\bar{u}_i(t)=\frac{1}{B}\frac{A-a+\theta+[-(A-a)+\theta]\frac{B^2c/r-(A-a+\theta)}{B^2c/r-(A-a)+\theta}e^{2\theta (t-T)}}{1-\frac{B^2c/r-(A-a+\theta)}{B^2c/r-(A-a)+\theta}e^{2\theta (t-T)}}(\hat{\bar{z}}_i(t)-l(t)),
\end{equation*}
where $l(t)=xe^{-At}-\frac{b}{A}(1-e^{At})$ and
\begin{equation*}
\hat{\bar{z}}_i(t)=\Theta(t)x+\Theta(t)\int_0^t\frac{(a-\frac{B^2P(s)}{r})l(s)+b  }{\Theta(s)}ds+\Theta(t)\int_0^t\frac{\sigma}{\Theta(s)} dW_i(s),
\end{equation*}
with
$
\Theta(t)=e^{\int_0^t(A-a-\frac{B^2P(s)}{r})ds}$.
Moreover, if $A=0$ and $r=B=1$, our results coincide with the results of \cite{Carmona2015} (see (5.8) in \cite{Carmona2015}).

In most instances, however, it is intractable to find an explicit solution to Problem (IBL). To better illustrate our results, we give some numerical simulations below. Suppose that $N=20$, $T=1$. Set the initial data as $x=1$, $A=3.2$, $a=1.5$, $B=2.8$, $b=2$, $C=0.6$, $\sigma=0.8$, $D=0$, $\widetilde{D}=2$, $\tilde{\sigma}=0.3$, $\epsilon=3.3$, $r=2.5$, $c=5$. Let $\Pi=P+\Lambda$, then Fig \ref{fig1} gives the numerical solutions of $P$, $\Lambda$, $\Pi$, $\Phi$ and $l$. In our numerical simulations, we will illustrate the influence of the partial information structure. For $D\neq0$ case, we can do similar numerical simulations and we prefer to omit it here.

\begin{figure}[H]
  \centering
  \includegraphics[width=5.4in,height=2.4in]{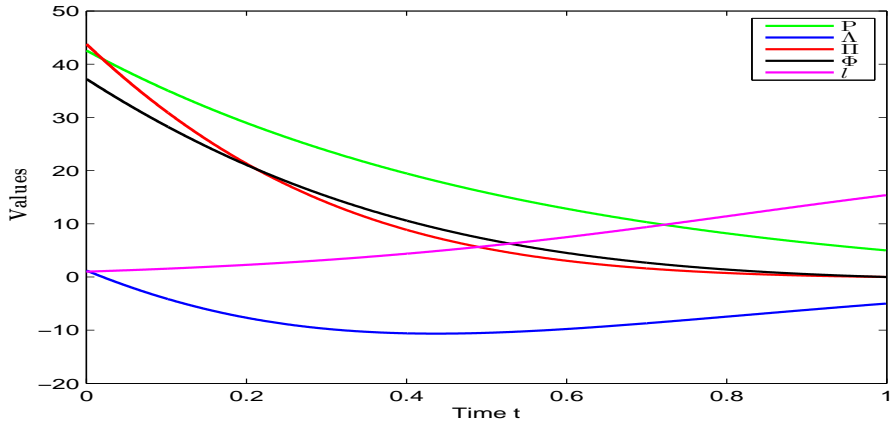}
  \caption{The numerical solutions of $P$, $\Lambda$, $\Pi$, $\Phi$ and $l$.}
  \label{fig1}
\end{figure}

Fig \ref{fig2} and Fig  \ref{fig3} show the optimal filtering of decentralized states and the decentralized control strategies of $20$ banks, which characterize the corresponding dynamics of log-monetary reverse and control rate of borrowing from or lending to a central bank.  Fig \ref{fig6} and Fig \ref{fig7} draw the optimal filtering of decentralized states and decentralized control strategies of $20$ banks when $\widetilde{D}$ becomes larger. By comparing Fig \ref{fig2} and Fig \ref{fig6} (resp. Fig \ref{fig3} and Fig \ref{fig7}), we find that the optimal filtering of decentralized states (resp.  the decentralized strategies) go up (resp. go down) when $\widetilde{D}$ is larger. In fact, the change of $\widetilde{D}$ will result the unknown information fluctuates greatly (eg. the central bank situation). Consequently, each bank communicates frequently with others for the unknown risk diversification, and reduces the cash flow between the central bank.

\begin{figure}[H]
\centering
\begin{minipage}[c]{0.48\textwidth}
\centering
\includegraphics[width=1\hsize=1]{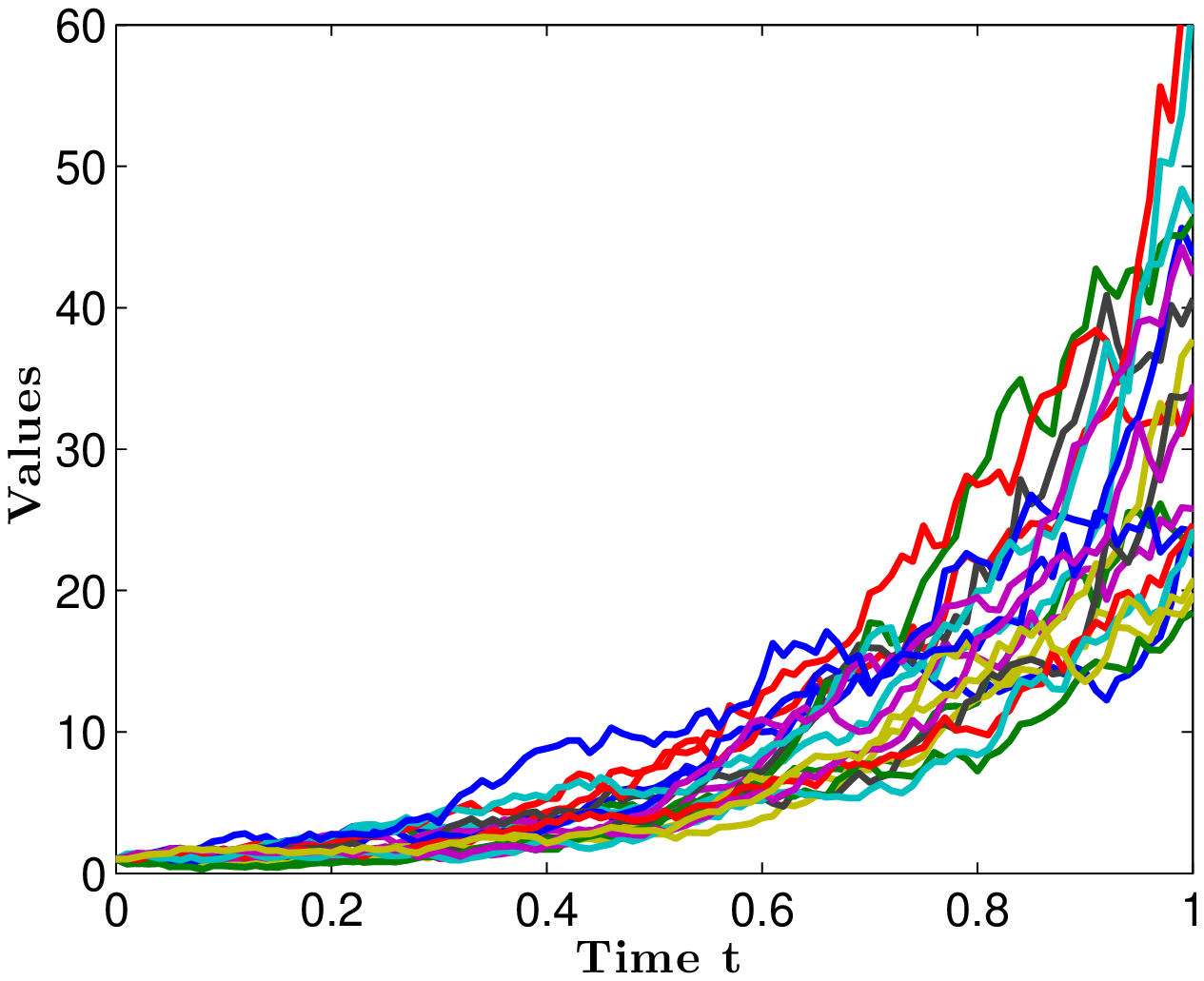}
\end{minipage}
\hspace{0.01\textwidth}
\begin{minipage}[c]{0.48\textwidth}
\centering
\includegraphics[width=1\hsize=1]{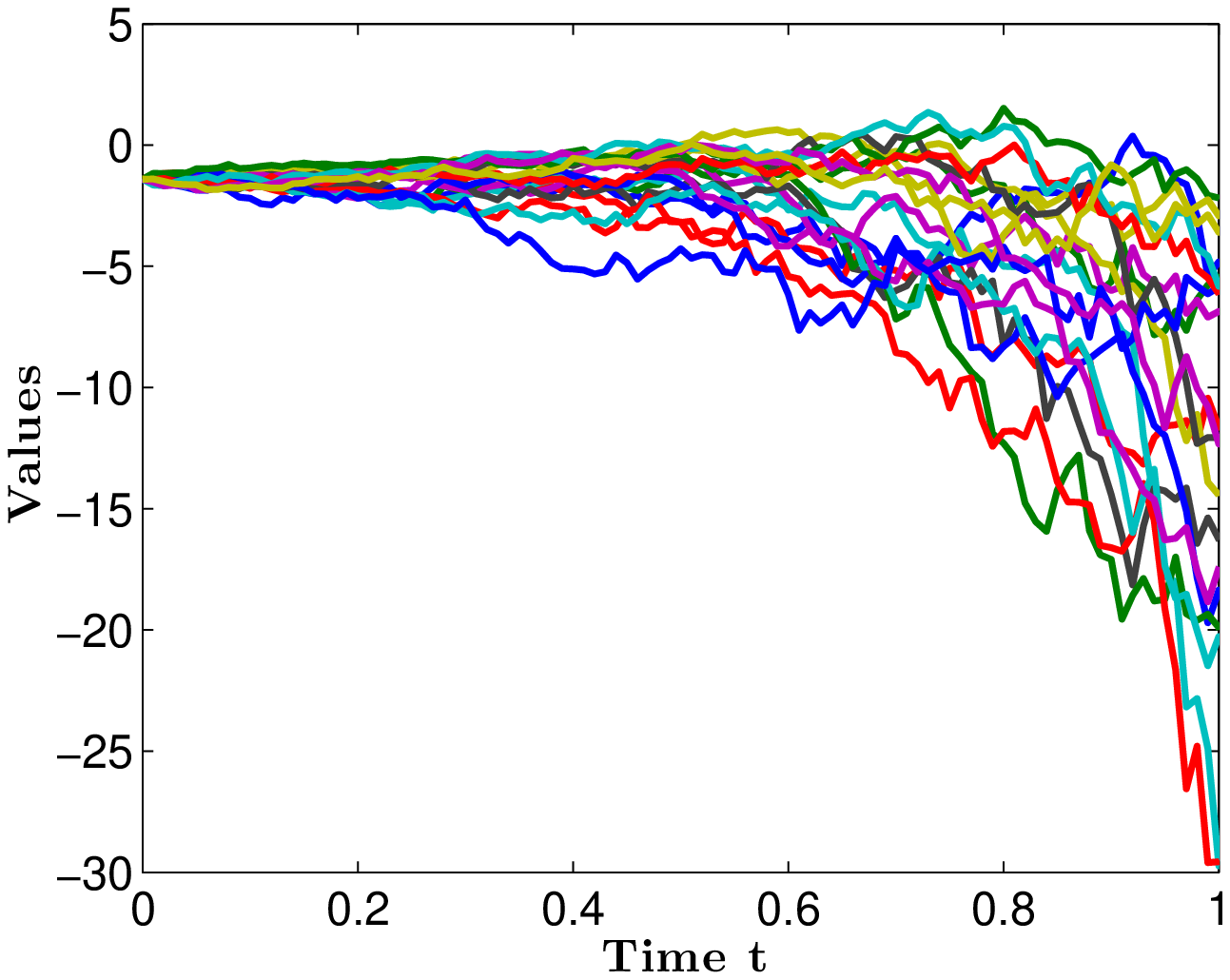}
\end{minipage}\\[3mm]
\begin{minipage}[t]{0.48\textwidth}
\centering
\caption{Optimal filtering of decentralized states when $C=0.6,\widetilde{D}=2$.}
\label{fig2}
\end{minipage}
\hspace{0.01\textwidth}
\begin{minipage}[t]{0.48\textwidth}
\centering
\caption{Decentralized control strategies when $C=0.6,\widetilde{D}=2$.}
\label{fig3}
\end{minipage}
\end{figure}

\begin{figure}[H]
\centering
\begin{minipage}[c]{0.48\textwidth}
\centering
\includegraphics[width=1 \hsize=1]{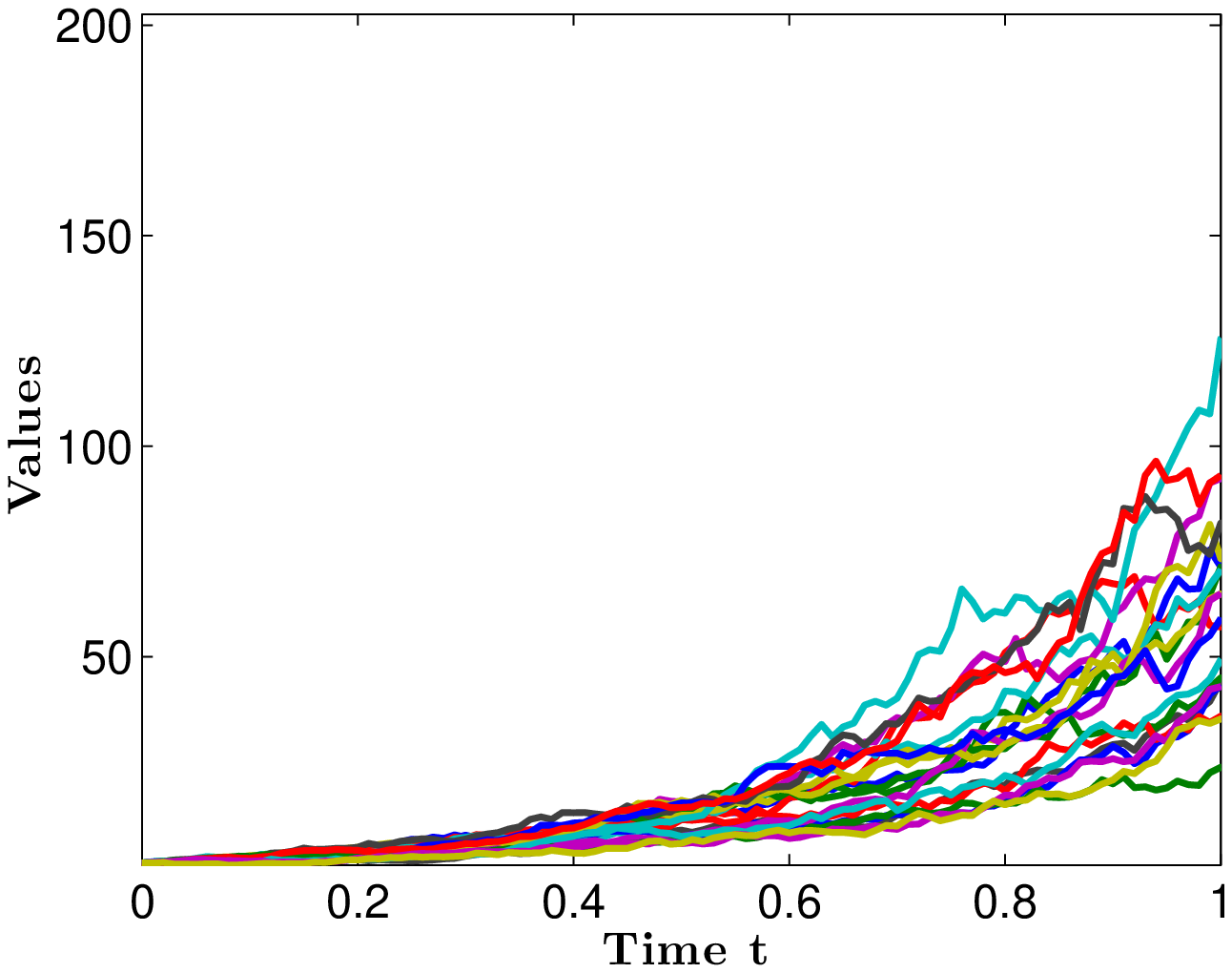}
\end{minipage}
\hspace{0.01\textwidth}
\begin{minipage}[c]{0.48\textwidth}
\centering
\includegraphics[width=1 \hsize=1]{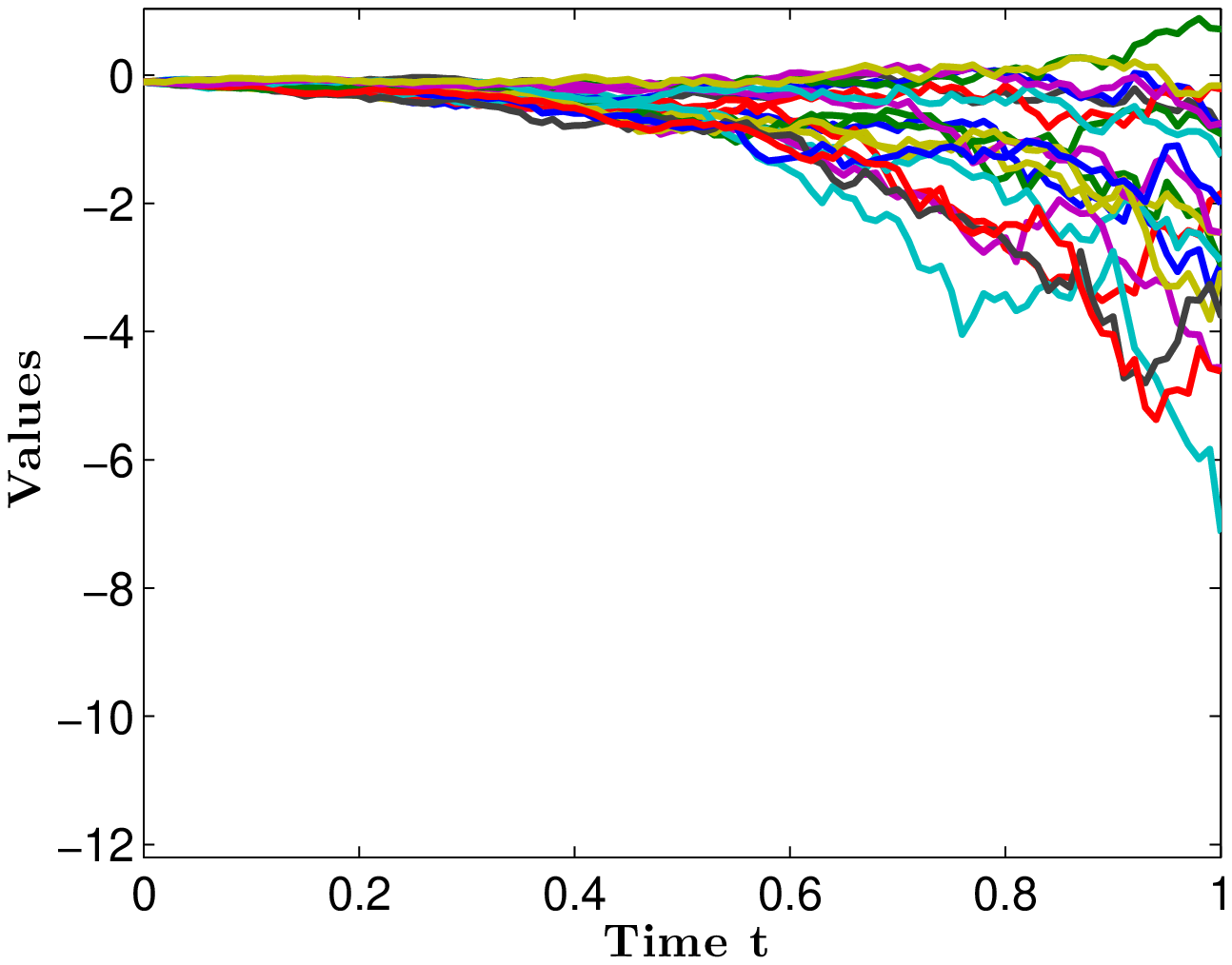}
\end{minipage}\\[3mm]
\begin{minipage}[t]{0.48\textwidth}
\centering
\caption{Optimal filtering of decentralized states when $C=0.6,\widetilde{D}=6$.}
\label{fig6}
\end{minipage}
\hspace{0.01\textwidth}
\begin{minipage}[t]{0.48\textwidth}
\centering
\caption{Decentralized control strategies when $C=0.6,\widetilde{D}=6$.}
\label{fig7}
\end{minipage}
\end{figure}

\section{Conclusions}\label{sec:con}
In this paper, a general stochastic large-population problem with partial information has been considered, where the diffusion of the dynamics of each agent can depend both on the state and the control. In control constrained case, by using Hamiltonian approach, we have obtained the decentralized strategies through a mixed nonlinear MF-FBSDE with projection operator, whose well-posedness has also been studied. Moreover, the corresponding $\varepsilon$-Nash equilibrium property has also been verified. In control unconstrained case, by using Riccati approach, the decentralized strategies can be further represented as the feedback of filtered state through a new Riccati type CC system, which  are quite different to the classical ones due to the additional term generated from partial information structure. We have used some algebraic inequalities as well as modified iterative method and equivalent transform method to obtain the well-posedness of our Riccati type CC system. As an application,  a general inter-bank borrowing and lending problem has been studied.

\newpage

\appendix
\section{Appendix} \label{appendix}
\setcounter{equation}{0}
\renewcommand{\theequation}{A.\arabic{equation}}

This appendix is devoted to proving Theorem \ref{wellposeness}.  For any fixed $(Y(\cdot ),Z(\cdot ),\widetilde{Z}(\cdot ))\in L_{\mathcal{F}_{t}^{W,\widetilde{W}}}^{2}(0,T;\mathbb{R}^{m}\times \mathbb{R}^{m\times d}\times \mathbb{R}^{m\times d})$, we define a mapping
\[\mathcal{V}_{1}:L_{\mathcal{F}_{t}^{W,\widetilde{W}}}^{2}(0,T;\mathbb{R}^{m}\times \mathbb{R}^{m\times d}\times \mathbb{R}^{m\times d})\longrightarrow L_{\mathcal{F%
}_{t}^{W,\widetilde{W}}}^{2}(0,T;\mathbb{R}^{n})\]
such that $\mathcal{V}_1(Y(\cdot ),Z(\cdot ),\widetilde{Z}(\cdot ))=X(\cdot )$, where $X$ solves the following SDE
\begin{equation}
\begin{aligned}
X(t)=&x+\int_{0}^{t}b(s,\Theta(s))ds+\int_{0}^{t}\sigma
(s,\Theta(s))dW(s)+\int_{0}^{t}\widetilde{\sigma}
(s,\Theta(s))d\widetilde{W}(s),\label{forward}
\end{aligned}
\end{equation}
with $\Theta(s)=(X(s),\mathbb{E}[X(s)],Y(s),\mathbb{E}[Y(s)|\mathcal{F}^W_s],Z(s),\mathbb{E}[Z(s)|\mathcal{F}^W_s],
\widetilde{Z}(s),\mathbb{E}[\widetilde{Z}(s)|\mathcal{F}^W_s]).$
Under assumptions \textup{(A1)}-\textup{(A3)} and \textup{(A5)}, it's easy to verify that the mapping $\mathcal{V}_1$ is well defined. Moreover, we have $\mathbb{E}\sup_{0\leq t\leq T}|X(t)|^2<\infty$.
\begin{proposition}\label{X}
Let \textup{(A1)}-\textup{(A3)} and \textup{(A5)} hold. Assume that $X_i(\cdot)$ is the solution of \eqref{forward} w.r.t. $(Y_i(\cdot),Z_i(\cdot),\widetilde{Z}_i(\cdot))$, $i=1,2$, then there exist positive constants $C_1$, $C_2$ and $C_3$, s.t. for all $\lambda\in\mathbb{R}$, it follows that

\begin{equation}
\begin{aligned}
&e^{-\lambda t}\mathbb{E}|X_{1}(t)-X_{2}(t)|^{2}+\bar{\lambda}%
_{1}\int_{0}^{t}e^{-\lambda s}\mathbb{E}|X_{1}(s)-X_{2}(s)|^{2}ds \\
\leq &(\rho _2C_1+\rho _{3}C_{1}+w_3^2+w_4^2+\kappa_3^2+\kappa_4^2)\int_{0}^{t}e^{-\lambda s}\mathbb{E}%
|Y_{1}(s)-Y_{2}(s)|^{2}ds\\
&+(\rho _4C_2+\rho _5C_{2}+w_5^2+w_6^2+\kappa_5^2+\kappa_6^2)\int_{0}^{t}e^{-\lambda s}\mathbb{E}|Z_{1}(s)-Z_{2}(s)|^{2}ds\\
&+(\rho _6C_3+\rho _7C_{3}+w_7^2+w_8^2+\kappa_7^2+\kappa_8^2)\int_{0}^{t}e^{-\lambda s}\mathbb{E}|\widetilde{Z}_{1}(s)-\widetilde{Z}_{2}(s)|^{2}ds,\label{E1}
\end{aligned}
\end{equation}
where $\bar{\lambda}_{1}=\lambda -2\lambda _{1}-(\rho _{2}+\rho_3)C_{1}^{-1}-(\rho_{4}+\rho_5)C_{2}^{-1}-(\rho_{6}+\rho_7)C_{3}^{-1}-2\rho _{1}-w_1^2-w_2^2-\kappa_1^2-\kappa_2^2$.
Moreover,
\begin{equation}
\begin{aligned}
&e^{-\lambda t}\mathbb{E}|X_{1}(t)-X_{2}(t)|^{2}\\
\leq &(\rho _2C_1+\rho _{3}C_{1}+w_3^2+w_4^2+\kappa_3^2+\kappa_4^2)\int_{0}^{t}e^{-\bar{\lambda}_1(t-s)}e^ {-\lambda s}\mathbb{E}%
|Y_{1}(s)-Y_{2}(s)|^{2}ds\\
&+(\rho _4C_2+\rho _5C_{2}+w_5^2+w_6^2+\kappa_5^2+\kappa_6^2)\int_{0}^{t}e^{-\bar{\lambda}_1(t-s)}e^ {-\lambda s}\mathbb{E}|Z_{1}(s)-Z_{2}(s)|^{2}ds\\
&+(\rho _6C_3+\rho _7C_{3}+w_7^2+w_8^2+\kappa_7^2+\kappa_8^2)\int_{0}^{t}e^{-\bar{\lambda}_1(t-s)}e^ {-\lambda s}\mathbb{E}|\widetilde{Z}_{1}(s)-\widetilde{Z}_{2}(s)|^{2}ds.\label{E2}
\end{aligned}
\end{equation}%
\end{proposition}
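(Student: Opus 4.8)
The plan is to apply It\^o's formula to the exponentially weighted square $e^{-\lambda t}|\Delta X(t)|^2$, where $\Delta X:=X_1-X_2$ and, analogously, $\Delta Y$, $\Delta Z$, $\Delta\widetilde{Z}$ denote the differences of the inputs, and then to absorb every cross term by an \emph{asymmetric} Young inequality tuned to the constants $C_1,C_2,C_3$. Writing $\Delta b,\Delta\sigma,\Delta\widetilde{\sigma}$ for the differences of the coefficients evaluated along $\Theta_1$ and $\Theta_2$, the first step gives
\begin{equation*}
e^{-\lambda t}\mathbb{E}|\Delta X(t)|^2=\int_0^t e^{-\lambda s}\mathbb{E}\big[-\lambda|\Delta X|^2+2\langle \Delta X,\Delta b\rangle+|\Delta\sigma|^2+|\Delta\widetilde{\sigma}|^2\big]\,ds,
\end{equation*}
since the stochastic integrals vanish under expectation and the boundary term at $0$ is zero because $X_1(0)=X_2(0)=x$.

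The drift difference I would split as $\Delta b=[b(s,X_1,\,\text{rest}_1)-b(s,X_2,\,\text{rest}_1)]+[b(s,X_2,\,\text{rest}_1)-b(s,X_2,\,\text{rest}_2)]$, so that the monotonicity hypothesis \textup{(A1)} controls the first bracket inside the inner product by $\lambda_1|\Delta X|^2$, while \textup{(A2)} bounds the second bracket by $\rho_1|\mathbb{E}\Delta X|+\rho_2|\Delta Y|+\rho_3|\mathbb{E}[\Delta Y|\mathcal{F}_s^W]|+\rho_4|\Delta Z|+\rho_5|\mathbb{E}[\Delta Z|\mathcal{F}_s^W]|+\rho_6|\Delta\widetilde{Z}|+\rho_7|\mathbb{E}[\Delta\widetilde{Z}|\mathcal{F}_s^W]|$. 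Each product $2\rho\,|\Delta X|\,|\xi|$ I would dominate by $2\rho|\Delta X||\xi|\le \rho C^{-1}|\Delta X|^2+\rho C|\xi|^2$, choosing $C=C_1$ for the $\rho_2,\rho_3$ (the $Y$) terms, $C=C_2$ for the $\rho_4,\rho_5$ terms, and $C=C_3$ for the $\rho_6,\rho_7$ terms; the pure mean-field term is handled by $2\rho_1|\Delta X||\mathbb{E}\Delta X|\to 2\rho_1\mathbb{E}|\Delta X|^2$ after taking expectation. Here the decisive point is that every conditional-expectation slot is collapsed by the conditional Jensen inequality $\mathbb{E}|\mathbb{E}[\,\cdot\,|\mathcal{F}_s^W]|^2\le\mathbb{E}|\,\cdot\,|^2$ (and $|\mathbb{E}\Delta X|^2\le\mathbb{E}|\Delta X|^2$), so the $\mathbb{E}[\Delta Y|\mathcal{F}_s^W]$, $\mathbb{E}[\Delta Z|\mathcal{F}_s^W]$, $\mathbb{E}[\Delta\widetilde{Z}|\mathcal{F}_s^W]$ contributions merge into the plain coefficients of $\mathbb{E}|\Delta Y|^2$, $\mathbb{E}|\Delta Z|^2$, $\mathbb{E}|\Delta\widetilde{Z}|^2$.

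The diffusion terms I would treat directly from \textup{(A3)}: taking expectations of $|\Delta\sigma|^2,|\Delta\widetilde{\sigma}|^2$ and again collapsing every (conditional) expectation by Jensen feeds $w_1^2+w_2^2+\kappa_1^2+\kappa_2^2$ into the $|\Delta X|^2$ coefficient and $w_3^2+w_4^2+\kappa_3^2+\kappa_4^2$, $w_5^2+w_6^2+\kappa_5^2+\kappa_6^2$, $w_7^2+w_8^2+\kappa_7^2+\kappa_8^2$ into the $Y$, $Z$, $\widetilde{Z}$ coefficients. Collecting the $|\Delta X|^2$ coefficient then reproduces exactly $-\bar\lambda_1$, while the remaining coefficients coincide with the right-hand side of \eqref{E1}, which is \eqref{E1} itself. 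Finally \eqref{E2} I would read off at the differential level: setting $\phi(t)=e^{-\lambda t}\mathbb{E}|\Delta X(t)|^2$, the same computation yields $\phi'(t)\le-\bar\lambda_1\phi(t)+g(t)$ with $g$ the integrand on the right of \eqref{E1}, and since $\phi(0)=0$ the variation-of-constants (Gronwall) formula gives $\phi(t)\le\int_0^t e^{-\bar\lambda_1(t-s)}g(s)\,ds$, which is precisely \eqref{E2}.

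The main obstacle is careful bookkeeping rather than any deep estimate: one must split $\Delta b$ so that monotonicity is applied only to the first slot, and then pick the three Young constants asymmetrically so that exactly $\rho C^{-1}$ lands on $\mathbb{E}|\Delta X|^2$ (feeding $\bar\lambda_1$) and $\rho C$ lands on the corresponding input norm. The mean-field and partial-information structure is what makes this delicate: without the conditional Jensen step the arguments $\mathbb{E}[\Delta Y|\mathcal{F}_s^W]$, $\mathbb{E}[\Delta Z|\mathcal{F}_s^W]$, $\mathbb{E}[\Delta\widetilde{Z}|\mathcal{F}_s^W]$ could not be folded into the plain $L^2$ norms that appear in the stated estimates.
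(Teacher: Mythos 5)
Your proposal is correct and follows essentially the same route as the paper: It\^o's formula applied to $e^{-\lambda t}|X_1(t)-X_2(t)|^2$, the monotonicity/Lipschitz splitting of the drift difference, asymmetric Young inequalities with the constants $C_1,C_2,C_3$, and conditional Jensen to collapse the $\mathbb{E}[\,\cdot\,|\mathcal{F}_s^W]$ terms into plain $L^2$ norms. The only cosmetic difference is in \eqref{E2}: the paper re-applies It\^o's formula with the weight $e^{-\bar{\lambda}_1(t-s)}e^{-\lambda s}$, whereas you integrate the resulting differential inequality by variation of constants; these are the same computation.
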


\begin{proof}
For simplicity, for $\varphi=X,Y,Z,\widetilde{Z}$  and $\phi=b,\sigma,\widetilde{\sigma}$, we denote $\widehat{\varphi}(s)=\varphi_1(s)-\varphi_2(s)$, and
\begin{equation*}
\begin{aligned}
\widehat{\phi}(s)=&\phi(s,X_1,\mathbb{E}[X_1],Y_1,\mathbb{E}[Y_1|\mathcal{F}^W_s],Z_1,\mathbb{E}[Z_1|\mathcal{F}^W_s],
\widetilde{Z}_1,\mathbb{E}[\widetilde{Z}_1|\mathcal{F}^W_s])\\
&-\phi(s,X_2,\mathbb{E}[X_2],Y_2,\mathbb{E}[Y_2|\mathcal{F}^W_s],Z_2,\mathbb{E}[Z_2|\mathcal{F}^W_s],
\widetilde{Z}_2,\mathbb{E}[\widetilde{Z}_2|\mathcal{F}^W_s]).
\end{aligned}
\end{equation*}%
Applying It\^o's formula to $e^{-\lambda t}|\widehat{X}(t)|^{2}$ and then taking expectation, we have
\begin{equation}
\begin{aligned}e^{-\lambda t}\mathbb{E}|\widehat{X}(t)|^{2}
 =&-\lambda
\int_{0}^{t}e^{-\lambda s}\mathbb{E}|\widehat{X}(s)|^{2}ds+2\mathbb{E}%
\int_{0}^{t}e^{-\lambda s}\langle \widehat{X}(s), \widehat{b}(s)\rangle ds \\
&+\mathbb{E}\int_{0}^{t}e^{-\lambda s}|\widehat{\sigma}(s)|^{2}ds+\mathbb{E}\int_{0}^{t}e^ {-\lambda s}|\widehat{\widetilde{\sigma}}(s)|^{2}ds.\label{e1}
\end{aligned}
\end{equation}%
On one hand, from \textup(A1), \textup(A2) and $|\mathbb{E}[V(s)|\mathcal{F}_s^W]|^2\leq \mathbb{E}[|V(s)|^2|\mathcal{F}_s^W]$, for $V(s)=\widehat{Y}(s),\widehat{Z}(s),\widehat{\widetilde{Z}}(s)$, it follows that

\begin{equation*}
\begin{aligned}
&2\langle \widehat{X}(s),\widehat{b}(s)\rangle\\
\leq&(2\lambda_1+\rho_2C_1^{-1}+\rho_3C_1^{-1}+\rho_4C_2^{-1}+\rho_5C_2^{-1}+\rho_6C_3^{-1}+\rho_7C_3^{-1})|\widehat{X}(s)|^2\\
&+\rho_2C_1|\widehat{Y}(s)|^2+\rho_3C_1\mathbb{E}[|\widehat{Y}(s)|^2|\mathcal{F}_s^W]+\rho_4C_2|\widehat{Z}(s)|^2
+\rho_5C_2\mathbb{E}[|\widehat{Z}(s)|^2|\mathcal{F}_s^W]\\
&+\rho_6C_3|\widehat{\widetilde{Z}}(s)|^2+\rho_7C_3\mathbb{E}[|\widehat{\widetilde{Z}}(s)|^2|\mathcal{F}_s^W]
+2\rho_1|\widehat{X}(s)||\mathbb{E}[\widehat{X}(s)]|.
\end{aligned}
\end{equation*}%
On the other hand, by Lipschitz condition of $\sigma$, and $|\mathbb{E}[\widehat{X}(s)]|^2\leq \mathbb{E}[|\widehat{X}(s)|^2]$, we get
\begin{equation*}
\begin{aligned}
|\widehat{\sigma}(t)|^2\leq&w_1^2|\widehat{X}(s)|^2+w_2^2|\mathbb{E}[\widehat{X}(s)]|^2+w_3^2|\widehat{Y}(s)|^2
+w_4^2|\mathbb{E}[\widehat{Y}(s)|\mathcal{F}_s^W]|^2\\
&+w_5^2|\widehat{Z}(s)|^2+w_6^2|\mathbb{E}[\widehat{Z}(s)|\mathcal{F}_s^W]|^2
+w_7^2|\widehat{\widetilde{Z}}(s)|^2+w_8^2|\mathbb{E}[\widehat{\widetilde{Z}}(s)|\mathcal{F}_s^W]|^2\\
\leq&+w_1^2|\widehat{X}(s)|^2+w_2^2\mathbb{E}|\widehat{X}(s)|^2+w_3^2|\widehat{Y}(s)|^2
+w_4^2\mathbb{E}[|\widehat{Y}(s)|^2|\mathcal{F}_s^W]\\
&+w_5^2|\widehat{Z}(s)|^2+w_6^2\mathbb{E}[|\widehat{Z}(s)|^2|\mathcal{F}_s^W]
+w_7^2|\widehat{\widetilde{Z}}(s)|^2+w_8^2\mathbb{E}[|\widehat{\widetilde{Z}}(s)|^2|\mathcal{F}_s^W].
\end{aligned}
\end{equation*}
Similarly, we can derive
\begin{equation*}
\begin{aligned}
|\widehat{\widetilde{\sigma}}(t)|^2
\leq&\kappa_1^2|\widehat{X}(s)|^2+\kappa_2^2\mathbb{E}|\widehat{X}(s)|^2+\kappa_3^2|\widehat{Y}(s)|^2
+\kappa_4^2\mathbb{E}[|\widehat{Y}(s)|^2|\mathcal{F}_s^W]\\
&+\kappa_5^2|\widehat{Z}(s)|^2+\kappa_6^2\mathbb{E}[|\widehat{Z}(s)|^2|\mathcal{F}_s^W]
+\kappa_7^2|\widehat{\widetilde{Z}}(s)|^2+\kappa_8^2\mathbb{E}[|\widehat{\widetilde{Z}}(s)|^2|\mathcal{F}_s^W].
\end{aligned}
\end{equation*}
By noticing $\mathbb{E}[\mathbb{E}[|V(s)|^2|\mathcal{F}_s^W]]=\mathbb{E}|V(s)|^2$, for $V(s)=\widehat{Y}(s),\widehat{Z}(s),\widehat{\widetilde{Z}}(s)$, and by combining above estimates and \eqref{e1}, we can obtain \eqref{E1}. Moreover, by applying It\^o's formula to $e^{-\bar{\lambda}_{1}(t-s)}e^{-\lambda s}|\widehat{X}(s)|^{2},s\in\lbrack 0,t]$, we get
\begin{equation*}
\begin{aligned}
&e^{-\lambda t}\mathbb{E}|\widehat{X}(t)|^{2}\\
=&-(\lambda -\bar{\lambda%
}_{1})\int_{0}^{t}e^{-\bar{\lambda}_{1}(t-s)}e^{-\lambda s}\mathbb{E}%
|\widehat{X}(s)|^{2}ds +2\mathbb{E}\int_{0}^{t}e^{-\bar{\lambda}_{1}(t-s)}e^{-\lambda s}\langle\widehat{X}(s),\widehat{b}(s)\rangle ds\\
&+\mathbb{E}%
\int_{0}^{t}e^{-\bar{\lambda}_{1}(t-s)}e^{-\lambda s}|\widehat{\sigma}
(s)|^{2}ds+\mathbb{E}%
\int_{0}^{t}e^{-\bar{\lambda}_{1}(t-s)}e^{-\lambda s}|\widehat{\widetilde{\sigma}}
(s)|^{2}ds,\\
\end{aligned}
\end{equation*}%
by repeating the previous produce, we derive \eqref{E2}.    \hfill$\square$
\end{proof}
\begin{remark}\label{remarkA1}
Denote  $||u(\cdot )||_{\lambda }^{2}:=\mathbb{E}\int_{0}^{T}e^{-\lambda s}|u(s)|^{2}ds$,
by integrating \eqref{E2} and using $\frac{1-e^{-\bar{\lambda}_{1}(T-s)}}{\bar{\lambda}_{1}}\leq\frac{1-e^{-\bar{%
\lambda}_{1}T}}{\bar{\lambda}_{1}}$, it follows
\begin{equation*}
\begin{aligned}
||X_{1}(\cdot )-X_{2}(\cdot )||_{\lambda }^{2}\leq &\frac{1-e^{-\bar{%
\lambda}_{1}T}}{\bar{\lambda}_{1}}[(\rho _2C_1+\rho _{3}C_{1}+w_3^2+w_4^2+\kappa_3^2+\kappa_4^2)||Y_{1}(\cdot )-Y_{2}(\cdot )||_{\lambda }^{2}\\
&+(\rho _4C_2+\rho _5C_{2}+w_5^2+w_6^2+\kappa_5^2+\kappa_6^2)||Z_{1}(\cdot )-Z_{2}(\cdot )||_{\lambda }^{2}\\
&+(\rho _6C_3+\rho _7C_{3}+w_7^2+w_8^2+\kappa_7^2+\kappa_8^2)||\widetilde{Z}_{1}(\cdot )-\widetilde{Z}_{2}(\cdot )||_{\lambda }^{2}].
\end{aligned}
\end{equation*}%
If let $t=T$ in \eqref{E2}, we have
\begin{equation*}
\begin{aligned}
&e^{-\lambda T}\mathbb{E}|X_{1}(T)-X_{2}(T)|^{2}\\
\leq &(1\vee e^{-\bar{%
\lambda}_{1}T})[(\rho _2C_1+\rho _{3}C_{1}+w_3^2+w_4^2+\kappa_3^2+\kappa_4^2)||Y_{1}(\cdot )-Y_{2}(\cdot )||_{\lambda }^{2}\\
&+(\rho _4C_2+\rho _5C_{2}+w_5^2+w_6^2+\kappa_5^2+\kappa_6^2)||Z_{1}(\cdot )-Z_{2}(\cdot )||_{\lambda }^{2}\\
&+(\rho _6C_3+\rho _7C_{3}+w_7^2+w_8^2+\kappa_7^2+\kappa_8^2)||\widetilde{Z}_{1}(\cdot )-\widetilde{Z}_{2}(\cdot )||_{\lambda }^{2}].
\end{aligned}
\end{equation*}%
Furthermore, if $\bar{\lambda}_1>0$ , above inequality is deduced to
\begin{equation*}
\begin{aligned}
e^{-\lambda T}\mathbb{E}|X_{1}(T)-X_{2}(T)|^{2}
\leq &[(\rho _2C_1+\rho _{3}C_{1}+w_3^2+w_4^2+\kappa_3^2+\kappa_4^2)||Y_{1}(\cdot )-Y_{2}(\cdot )||_{\lambda }^{2}\\
&+(\rho _4C_2+\rho _5C_{2}+w_5^2+w_6^2+\kappa_5^2+\kappa_6^2)||Z_{1}(\cdot )-Z_{2}(\cdot )||_{\lambda }^{2}\\
&+(\rho _6C_3+\rho _7C_{3}+w_7^2+w_8^2+\kappa_7^2+\kappa_8^2)||\widetilde{Z}_{1}(\cdot )-\widetilde{Z}_{2}(\cdot )||_{\lambda }^{2}].
\end{aligned}
\end{equation*}%
\end{remark}

For any fixed $X(\cdot)\in L_{\mathcal{F}_{t}^{W,\widetilde{W}}}^{2}(0,T;\mathbb{R}^{n})$, we define the mapping
\[\mathcal{V}_{2}:L_{\mathcal{F}_{t}^{W,\widetilde{W}}}^{2}(0,T;\mathbb{R}^{n})\longrightarrow L_{\mathcal{F}_{t}^{W,\widetilde{W}}}^{2}(0,T;\mathbb{R}^{m}\times \mathbb{R}^{m\times d}\times \mathbb{R}^{m\times d})\]
such that $\mathcal{V}_{2}(X(\cdot ))=(Y(\cdot ),Z(\cdot ),\widetilde{Z}(\cdot ))$, where $(Y(\cdot ),Z(\cdot ),\widetilde{Z}(\cdot ))$ solves the following backward SDE
\begin{equation}
Y(t)=~g(X(T),\mathbb{E}[X(T)])+\int_{t}^{T}f(s,\Theta(s))ds
-\int_{t}^{T}Z(s)dW(s)-\int_{t}^{T}\widetilde{Z}(s)d\widetilde{W}(s).\label{backward}
\end{equation}%
Under our assumptions, we can prove the well-posedness of \eqref{backward} by using similar arguments as in Darling and Pardoux \cite{Darling1997}. Therefore, the mapping $\mathcal{V}_{2}$ is well defined. Moreover, we have $\mathbb{E}\sup_{0\leq t\leq T}|Y(t)|^2<\infty$.
\begin{proposition}\label{Y}
Let \textup{(A1)},\textup{(A2)}, \textup{(A4)} and \textup{(A5)} hold. Assume that $(Y_i(\cdot),Z_i(\cdot),\widetilde{Z}_i(\cdot))$ is the solution of \eqref{backward} w.r.t. $X_i(\cdot)$, $i=1,2$, then there exist positive constants $K_1$, $K_2$ and $K_3$, s.t. for all $\lambda\in\mathbb{R}$, it follows that
\begin{equation}
\begin{aligned}
&e^{-\lambda t}\mathbb{E}|Y_{1}(t)-Y_{2}(t)|^{2}+\bar{\lambda}%
_{2}\int_{t}^{T}e^{-\lambda s}\mathbb{E}|Y_{1}(s)-Y_{2}(s)|^{2}ds\\
&\qquad+(1-\mu_4K_2-\mu_5K_2)\mathbb{E}\int_{t}^{T}e^{-\lambda s}|Z_{1}(s)-Z_{2}(s)|^{2}ds\\
&\qquad+(1-\mu_6K_3-\mu_7K_3)\mathbb{E}\int_{t}^{T}e^{-\lambda s}|\widetilde{Z}_{1}(s)-\widetilde{Z}_{2}(s)|^{2}ds \\
\leq &(\rho _{8}^{2}+\rho _{9}^{2})e^{-\lambda T}\mathbb{E}%
|X_{1}(T)-X_{2}(T)|^{2}+(\mu_1+\mu_2)K_1\int_{t}^{T}e^
{-\lambda s}\mathbb{E}|X_{1}(s)-X_{2}(s)|^{2}ds,\label{E3}
\end{aligned}
\end{equation}%
where $\bar{\lambda}_{2}=-\lambda -2\lambda _{2}-(\mu_1+\mu_2)K_1^{-1}-(\mu_4+\mu_5)K_2^{-1}-(\mu_6+\mu_7)K_3^{-1}-2\mu_3$.
Moreover,
\begin{equation}
\begin{aligned}
&e^{-\lambda t}\mathbb{E}|Y_{1}(t)-Y_{2}(t)|^{2}+(1-\mu_4K_2-\mu_5K_2)\mathbb{E}\int_{t}^{T}e^{-\bar{\lambda}_2(s-t)}e^{-\lambda s}|Z_{1}(s)-Z_{2}(s)|^{2}ds\\
&\qquad+(1-\mu_6K_3-\mu_7K_3)\mathbb{E}\int_{t}^{T}e^{-\bar{\lambda}_2(s-t)}e^{-\lambda s}|\widetilde{Z}_{1}(s)-\widetilde{Z}_{2}(s)|^{2}ds \\
\leq& (\rho _{8}^{2}+\rho _{9}^{2})e^{-\bar{\lambda}_2(T-t)}e^{-\lambda T}\mathbb{E}%
|X_{1}(T)-X_{2}(T)|^{2}\\
&+(\mu_1+\mu_2)K_1\int_{t}^{T}e^{-\bar{\lambda}_2(s-t)}e^{-\lambda s}\mathbb{E}|X_{1}(s)-X_{2}(s)|^{2}ds.\label{E4}
\end{aligned}
\end{equation}%
\end{proposition}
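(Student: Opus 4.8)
The plan is to mirror the argument used for the forward estimate in Proposition \ref{X}, transplanting it to the backward equation \eqref{backward}. Writing $\widehat{Y}=Y_1-Y_2$, $\widehat{Z}=Z_1-Z_2$, $\widehat{\widetilde{Z}}=\widetilde{Z}_1-\widetilde{Z}_2$, and denoting by $\widehat{f}(s)$ the difference of the two generators evaluated along the respective solutions, I would first apply It\^o's formula to $e^{-\lambda s}|\widehat{Y}(s)|^{2}$ on $[t,T]$ and take expectation. Because \eqref{backward} is a BSDE, the quadratic variation of the martingale part contributes the full terms $|\widehat{Z}(s)|^{2}+|\widehat{\widetilde{Z}}(s)|^{2}$ to the drift, while the stochastic integrals against $W$ and $\widetilde{W}$ vanish in expectation; after rearranging this gives the identity
\[
e^{-\lambda t}\mathbb{E}|\widehat{Y}(t)|^{2}+\mathbb{E}\int_{t}^{T}e^{-\lambda s}\big(|\widehat{Z}(s)|^{2}+|\widehat{\widetilde{Z}}(s)|^{2}\big)ds=e^{-\lambda T}\mathbb{E}|\widehat{Y}(T)|^{2}+\mathbb{E}\int_{t}^{T}e^{-\lambda s}\big(\lambda|\widehat{Y}(s)|^{2}+2\langle \widehat{Y}(s),\widehat{f}(s)\rangle\big)ds,
\]
which is the backward counterpart of \eqref{e1}.

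Next I would control the two terms on the right. The terminal term is handled by \textup{(A4)}: since $|\widehat{Y}(T)|^{2}\le\rho_8^2|\widehat{X}(T)|^{2}+\rho_9^2|\mathbb{E}[\widehat{X}(T)]|^{2}$, Jensen's inequality gives $\mathbb{E}|\widehat{Y}(T)|^{2}\le(\rho_8^2+\rho_9^2)\mathbb{E}|\widehat{X}(T)|^{2}$. For the cross term I would split $\widehat{f}$ into the piece where only the $y$-slot changes and the piece where all remaining slots change: the monotonicity \textup{(A1)} bounds the first by $2\lambda_2|\widehat{Y}|^{2}$, while the Lipschitz estimate \textup{(A2)} with Cauchy--Schwarz bounds the second by $2|\widehat{Y}|$ times $\mu_1|\widehat{X}|+\mu_2|\mathbb{E}[\widehat{X}]|+\mu_3|\mathbb{E}[\widehat{Y}|\mathcal{F}_s^W]|+\mu_4|\widehat{Z}|+\mu_5|\mathbb{E}[\widehat{Z}|\mathcal{F}_s^W]|+\mu_6|\widehat{\widetilde{Z}}|+\mu_7|\mathbb{E}[\widehat{\widetilde{Z}}|\mathcal{F}_s^W]|$. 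Applying Young's inequality with weight $K_1$ to the $x,\alpha$ terms, weight $K_2$ to the $z,\gamma$ terms, weight $K_3$ to the $\tilde z,\tilde\gamma$ terms, and weight $1$ to the $\beta$ term produces exactly the coefficients appearing in $\bar\lambda_2$; the conditional terms are then absorbed via $|\mathbb{E}[V|\mathcal{F}_s^W]|^{2}\le\mathbb{E}[|V|^{2}|\mathcal{F}_s^W]$ together with the tower property, so that after taking $\mathbb{E}$ each conditional $L^2$ norm collapses to an ordinary one (this is what turns the single $\mu_3$ into $2\mu_3$ and pairs $\mu_4$ with $\mu_5$ and $\mu_6$ with $\mu_7$). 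Moving the resulting $\mathbb{E}\int|\widehat{Z}|^{2}$ and $\mathbb{E}\int|\widehat{\widetilde{Z}}|^{2}$ contributions to the left and recognizing the coefficient of $\int_t^T e^{-\lambda s}\mathbb{E}|\widehat{Y}|^{2}ds$ as $-\bar\lambda_2$ yields \eqref{E3}.

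For the refined estimate \eqref{E4} I would instead apply It\^o's formula to the weighted quantity $e^{-\bar\lambda_2(s-t)}e^{-\lambda s}|\widehat{Y}(s)|^{2}$ on $[t,T]$; the extra factor contributes $-\bar\lambda_2\,e^{-\bar\lambda_2(s-t)}e^{-\lambda s}|\widehat{Y}(s)|^{2}$ to the drift, which, combined with the $\lambda$-term and the same Young/conditional-expectation estimates as above, exactly cancels the $|\widehat{Y}|^{2}$-integral on the right, leaving only the terminal term and the $|\widehat{X}|^{2}$-integral, each now carried with the weight $e^{-\bar\lambda_2(\cdot-t)}$. This is the backward analogue of passing from \eqref{E1} to \eqref{E2} in Proposition \ref{X}.

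The routine part is the It\^o computation and the terminal estimate; the step I expect to require the most care is the simultaneous treatment of the plain-expectation slot $\mathbb{E}[X]$ and the conditional-expectation slots $\mathbb{E}[Y|\mathcal{F}_s^W]$, $\mathbb{E}[Z|\mathcal{F}_s^W]$, $\mathbb{E}[\widetilde{Z}|\mathcal{F}_s^W]$ entering $f$. One must choose the Young weights so that each pair $(\mu_1,\mu_2)$, $(\mu_4,\mu_5)$, $(\mu_6,\mu_7)$ shares a common weight $K_1,K_2,K_3$, and then invoke conditional Jensen and the tower property in the correct order, since the conditional-expectation terms cannot be bounded pathwise by the unconditioned ones but only after taking $\mathbb{E}$. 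Getting these reductions to line up is precisely what produces the stated coefficients $1-(\mu_4+\mu_5)K_2$ and $1-(\mu_6+\mu_7)K_3$ on the left-hand side and the exact form of $\bar\lambda_2$.
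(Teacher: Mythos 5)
Your proposal is correct and follows essentially the same route as the paper's proof: It\^o's formula applied to $e^{-\lambda s}|\widehat{Y}(s)|^{2}$ (and then to $e^{-\bar{\lambda}_2(s-t)}e^{-\lambda s}|\widehat{Y}(s)|^{2}$ for \eqref{E4}), the terminal bound from \textup{(A4)} with Jensen's inequality, the split of $\widehat{f}$ via the monotonicity in \textup{(A1)} and the Lipschitz bounds in \textup{(A2)}, and Young's inequality with weights $K_1,K_2,K_3$ combined with conditional Jensen and the tower property to collapse the conditional-expectation terms. Your accounting of how this produces $\bar{\lambda}_2$, the factor $2\mu_3$, and the coefficients $1-(\mu_4+\mu_5)K_2$ and $1-(\mu_6+\mu_7)K_3$ matches the paper's computation exactly.
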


\begin{proof}
We denote $\widehat{\varphi}(s)=\varphi_1(s)-\varphi_2(s)$, for $\varphi=X,Y,Z,\widetilde{Z}$, and
\begin{equation*}
\begin{aligned}
\widehat{f}(s)=&f(s,X_1,\mathbb{E}[X_1],Y_1,\mathbb{E}[Y_1|\mathcal{F}^W_s],Z_1,\mathbb{E}[Z_1|\mathcal{F}^W_s],
\widetilde{Z}_1,\mathbb{E}[\widetilde{Z}_1|\mathcal{F}^W_s])\\
&-f(s,X_2,\mathbb{E}[X_2],Y_2,\mathbb{E}[Y_2|\mathcal{F}^W_s],Z_2,\mathbb{E}[Z_2|\mathcal{F}^W_s],
\widetilde{Z}_2,\mathbb{E}[\widetilde{Z}_2|\mathcal{F}^W_s]).\\
\end{aligned}
\end{equation*}%
Applying It\^o's formula to $e^{-\lambda t}|\widehat{Y}(t)|^{2}$, and then taking expectation, we have
\begin{equation}
\begin{aligned}
&e^{-\lambda t}\mathbb{E}|\widehat{Y}(t)|^{2}-\lambda
\int_{t}^{T}e^{-\lambda s}\mathbb{E}|\widehat{Y}(s)|^{2}ds+\mathbb{E}%
\int_{t}^{T}e^{-\lambda s}|\widehat{Z}(s)|^{2}ds+\mathbb{E}%
\int_{t}^{T}e^{-\lambda s}|\widehat{\widetilde{Z}}(s)|^{2}ds \\
=&e^{-\lambda T}\mathbb{E}|Y_{1}(T)-Y_{2}(T)|^{2}+2\mathbb{E}%
\int_{0}^{t}e^{-\lambda s}\langle \widehat{Y}(s),\widehat{f}(s)\rangle ds.\label{e2}
\end{aligned}
\end{equation}%
Using assumption \textup(A1) and Lipschitz condition of $f,g$, we obtain
\begin{equation*}
\begin{aligned}
&2\langle\widehat{Y}(s),\widehat{f}(s)\rangle \\\leq&(2\lambda_2+\mu_1K_1^{-1}+\mu_2K_1^{-1}+\mu_4K_2^{-1}+\mu_5K_2^{-1}+\mu_6K_3^{-1}+\mu_7K_3^{-1})|\widehat{Y}(s)|^2\\
&+\mu_1K_1|\widehat{X}(s)|^2+\mu_2K_1\mathbb{E}|\widehat{X}(s)|^2+\mu_4K_2|\widehat{Z}(s)|^2+\mu_5K_2\mathbb{E}[|\widehat{Z}(s)|^2|\mathcal{F}^W_s]\\
&+\mu_6K_3|\widehat{\widetilde{Z}}(s)|^2+\mu_7K_3\mathbb{E}[|\widehat{\widetilde{Z}}(s)|^2|\mathcal{F}^W_s]+2\mu_3|\widehat{Y}(s)||\mathbb{E}[\widehat{Y}(s)|\mathcal{F}^W_s]|,
\end{aligned}
\end{equation*}%
and
$
|\widehat{Y}(T)|^{2} =|g(X_{1}(T),\mathbb{E}[X_{1}(T)])-g(X_{2}(T),%
\mathbb{E}[X_{2}(T)])|^{2}
\leq \rho _{8}^{2}|\widehat{X}(T|^{2}+\rho _{9}^{2}\mathbb{E}%
|\widehat{X}(T)|^{2}.
$
Then from \eqref{e2} and above analysis, we can get \eqref{E3}.
Similarly, applying It\^o's formula to $e^{-\bar{\lambda}_{2}(s-t)}e^{-\lambda s}|\widehat{Y}(s)|^{2},s\in
\lbrack t,T]$, we have
\begin{equation}
\begin{aligned}
&e^{-\lambda t}\mathbb{E}|\widehat{Y}(t)|^{2}-(\lambda +\bar{\lambda}%
_{2})\int_{t}^{T}e^{-\bar{\lambda}_{2}(s-t)}e^{-\lambda s}\mathbb{E}%
|\widehat{Y}(s)|^{2}ds \\
&+\mathbb{E}\int_{t}^{T}e^{-\bar{\lambda}_{2}(s-t)}e^{-\lambda
s}|\widehat{Z}(s)|^{2}ds+\mathbb{E}\int_{t}^{T}e^{-\bar{\lambda}_{2}(s-t)}e^{-\lambda
s}|\widehat{\widetilde{Z}}(s)|^{2}ds \\
=&e^{-\bar{\lambda}_{2}(T-t)}e^{-\lambda T}\mathbb{E}%
|\widehat{Y}(T)|^{2} +2\mathbb{E}\int_{0}^{t}e^{-\bar{\lambda}_{2}(s-t)}e^{-\lambda
s}\langle \widehat{Y}(s),\widehat{f}(s)\rangle ds.\label{e4}
\end{aligned}
\end{equation}%
From the previous estimates and \eqref{e4}, we can prove \eqref{E4}.    \hfill$\square$
\end{proof}

\begin{remark}\label{remarkA2}
By integrating \eqref{E4} and choosing $K_2$, $K_3$ s.t. $0< K_2 < (\mu_4+\mu_5)^{-1}$ and $0< K_3 < (\mu_6+\mu_7)^{-1}$, we have $($noting that $\frac{1-e^{-\bar{%
\lambda}_{2}s}}{\bar{\lambda}_{2}}\leq \frac{1-e^{-\bar{%
\lambda}_{2}T}}{\bar{\lambda}_{2}}$, for any $s\in[0,T]$$)$
\begin{equation*}
\begin{aligned}
&||Y_{1}(\cdot )-Y_{2}(\cdot )||_{\lambda }^{2}\\\leq& \frac{1-e^{-\bar{%
\lambda}_{2}T}}{\bar{\lambda}_{2}}[(\rho _{8}^{2}+\rho
_{9}^{2})e^{-\lambda T}\mathbb{E}|X_{1}(T)-X_{2}(T)|^{2}+(\mu _{1}+\mu
_{2})K_1||X_{1}(\cdot )-X_{2}(\cdot )||_{\lambda }^{2}].
\end{aligned}
\end{equation*}%
In addition, let $t=0$ in \eqref{E4}, one can get
\begin{equation*}
\begin{aligned}
&||Z_{1}(\cdot )-Z_{2}(\cdot )||_{\lambda }^{2}+||\widetilde{Z}_{1}(\cdot )-\widetilde{Z}_{2}(\cdot )||_{\lambda }^{2}\\
\leq&\frac{(\rho _{8}^{2}+\rho _{9}^{2})e^{-\bar{\lambda}%
_{2}T}e^{-\lambda T}}{[(1-\mu_4K_2-\mu_5K_2)\wedge(1-\mu_6K_3-\mu_7K_3)](1\wedge e^{-\bar{\lambda}_{2}T})}\mathbb{E}%
|X_{1}(T)-X_{2}(T)|^{2}\\
&+\frac{(\mu _{1}+\mu
_{2})K_1(1\vee e^{-\bar{%
\lambda}_{2}T})}{[(1-\mu_4K_2-\mu_5K_2)\wedge(1-\mu_6K_3-\mu_7K_3)](1\wedge e^{-\bar{\lambda}_{2}T})}||X_{1}(\cdot )-X_{2}(\cdot )||_{\lambda }^{2}.
\end{aligned}
\end{equation*}%
Moreover, if $\bar{\lambda}_2>0$ and let $t=0$ in \eqref{E3}, we have
\begin{equation*}
\begin{aligned}
&||Z_{1}(\cdot )-Z_{2}(\cdot )||_{\lambda }^{2}+||\widetilde{Z}_{1}(\cdot )-\widetilde{Z}_{2}(\cdot )||_{\lambda }^{2}\\
\leq &\frac{(\rho _{8}^{2}+\rho _{9}^{2})e^{-\lambda
T}\mathbb{E}|X_{1}(T)-X_{2}(T)|^{2}+(\mu _{1}+\mu
_{2})K_1||X_{1}(\cdot )-X_{2}(\cdot )||_{\lambda }^{2}}{%
(1-\mu_4K_2-\mu_5K_2)\wedge(1-\mu_6K_3-\mu_7K_3)}.
\end{aligned}
\end{equation*}%
\end{remark}

Now, we will prove Theorem \ref{wellposeness} in the following.  

\emph{Proof of Theorem \ref{wellposeness}.}
Define a mapping $\mathcal{V}=\mathcal{V}_{2}\circ \mathcal{V}_{1}$, recall that $\mathcal{V}_1$ and $\mathcal{V}_2$ are given through \eqref{forward} and \eqref{backward}.
Then $\mathcal{V}$ is a mapping from $L_{\mathcal{F}_{t}^{W,\widetilde{W}}}^{2}(0,T;\mathbb{R}^{m}\times \mathbb{R}^{m\times d}\times \mathbb{R}^{m\times d})$ to itself. Next, we will prove that $\mathcal{V}$ is a contraction mapping under the norm $||\cdot||_\lambda$.

For any $(Y(\cdot ),Z(\cdot ),\widetilde{Z}(\cdot ))$, we set
\[X_{i}(\cdot )=\mathcal{V}_{1}(Y_{i}(\cdot ),Z_{i}(\cdot ),\widetilde{Z}_{i}(\cdot )), \]
\[(\bar{Y}_{i}(\cdot ),\bar{Z}_{i}(\cdot ),\bar{\widetilde{Z}}_{i}(\cdot ))=\mathcal{V(}(Y_{i}(\cdot ),Z_{i}(\cdot ),\widetilde{Z}_{i}(\cdot ))),\quad i=1,2.\]
With the help of inequalities in Remarks \ref{remarkA1} and  \ref{remarkA2}, we can derive
\begin{equation*}
\begin{aligned}
&||\bar{Y}_{1}(t)-\bar{Y}_{2}(t)||_{\lambda }^{2}+||\bar{Z}_{1}(t)-\bar{Z}%
_{2}(t)||_{\lambda }^{2}+||\bar{\widetilde{Z}}_{1}(t)-\bar{\widetilde{Z}}%
_{2}(t)||_{\lambda }^{2} \\
\leq &\Big\{\frac{1-e^{-\bar{\lambda}_{2}T}}{\bar{%
\lambda}_{2}}+\frac{1\vee e^{-\bar{\lambda}_{2}T}}{[(1-\mu_4K_2-\mu_5K_2)\wedge(1-\mu_6K_3-\mu_7K_3)](1\wedge e^{-\bar{\lambda}_{2}T})}\Big\} \\
&\quad\times \lbrack (\rho _{8}^{2}+\rho _{9}^{2})e^{-\lambda T}\mathbb{E}%
|X_{1}(T)-X_{2}(T)|^{2}+(\mu_1+\mu_2)K_1||X_{1}(t)-X_{2}(t)||_{\lambda }^{2}] \\
\leq &\Big\{\frac{1-e^{-\bar{\lambda}_{2}T}}{\bar{%
\lambda}_{2}}+\frac{1\vee e^{-\bar{\lambda}_{2}T}}{[(1-\mu_4K_2-\mu_5K_2)\wedge(1-\mu_6K_3-\mu_7K_3)](1\wedge e^{-\bar{\lambda}_{2}T})}\Big\} \\
&\quad\times \lbrack (\rho _{8}^{2}+\rho _{9}^{2})(1\vee e^{-\bar{\lambda}%
_{1}T})+(\mu_1+\mu_2)K_1\frac{1-e^{-\bar{\lambda}_{1}T}}{%
\bar{\lambda}_{1}}] \\
&\quad\times [(\rho _2C_1+\rho _{3}C_{1}+w_3^2+w_4^2+\kappa_3^2+\kappa_4^2)||Y_{1}(\cdot )-Y_{2}(\cdot )||_{\lambda }^{2}\\
&\quad+(\rho _4C_2+\rho _5C_{2}+w_5^2+w_6^2+\kappa_5^2+\kappa_6^2)||Z_{1}(\cdot )-Z_{2}(\cdot )||_{\lambda }^{2}\\
&\quad+(\rho _6C_3+\rho _7C_{3}+w_7^2+w_8^2+\kappa_7^2+\kappa_8^2)||\widetilde{Z}_{1}(\cdot )-\widetilde{Z}_{2}(\cdot )||_{\lambda }^{2}],
\end{aligned}
\end{equation*}%
thus the first assertion can be proved.

Now let us show the second assertion of Theorem \ref{wellposeness}. Since $2(\lambda_1+\lambda_2)<-2\rho_1-2\mu_3-(\mu_4+\mu_5)^2-(\mu_6+\mu_7)^2-w_1^2-w_2^2-\kappa_1^2-\kappa_2^2$, then we can choose $\lambda\in\mathbb{R}$, $0<K_2 <(\mu_4+\mu_5)^{-1}$, $0< K_3< (\mu_6+\mu_7)^{-1}$, and sufficiently large $C_1,C_2,C_3,K_1$ such that $\bar{\lambda}_{1}>0,\bar{\lambda}_{2}>0,1-\mu_4K_2-\mu_5K_2>0,1-\mu_6K_3-\mu_7K_3>0$.
Therefore, we have
\begin{equation*}
\begin{aligned}
&||\bar{Y}_{1}(t)-\bar{Y}_{2}(t)||_{\lambda }^{2}+||\bar{Z}_{1}(t)-\bar{Z}%
_{2}(t)||_{\lambda }^{2}+||\bar{\widetilde{Z}}_{1}(t)-\bar{\widetilde{Z}}%
_{2}(t)||_{\lambda }^{2} \\
\leq &\Big\{\frac{1}{\bar{%
\lambda}_{2}}+\frac{1}{(1-\mu_4K_2-\mu_5K_2)\wedge(1-\mu_6K_3-\mu_7K_3)}\Big\} \\
&\quad\times \lbrack (\rho _{8}^{2}+\rho _{9}^{2})e^{-\lambda T}\mathbb{E}%
|X_{1}(T)-X_{2}(T)|^{2}+(\mu_1+\mu_2)K_1||X_{1}(t)-X_{2}(t)||_{\lambda }^{2}] \\
\leq &\Big\{\frac{1}{\bar{%
\lambda}_{2}}+\frac{1}{(1-\mu_4K_2-\mu_5K_2)\wedge(1-\mu_6K_3-\mu_7K_3)}\Big\} \Big[ \rho _{8}^{2}+\rho _{9}^{2}+(\mu_1+\mu_2)K_1\frac{1}{%
\bar{\lambda}_{1}}\Big] \\
&\quad\times [(\rho _2C_1+\rho _{3}C_{1}+w_3^2+w_4^2+\kappa_3^2+\kappa_4^2)||Y_{1}(\cdot )-Y_{2}(\cdot )||_{\lambda }^{2}\\
&\quad+(\rho _4C_2+\rho _5C_{2}+w_5^2+w_6^2+\kappa_5^2+\kappa_6^2)||Z_{1}(\cdot )-Z_{2}(\cdot )||_{\lambda }^{2}\\
&\quad+(\rho _6C_3+\rho _7C_{3}+w_7^2+w_8^2+\kappa_7^2+\kappa_8^2)||\widetilde{Z}_{1}(\cdot )-\widetilde{Z}_{2}(\cdot )||_{\lambda }^{2}],
\end{aligned}
\end{equation*}%
which allows us to complete the proof.       \hfill$\square$

\end{document}